\renewcommand{\headrulewidth}{0pt}
\newcommand{\noheader}{
    \renewcommand{\headrulewidth}{0pt}%
    \fancyhead{}
}
\newcommand{\footer}{%
    \fancyfoot[RO]{\setlength\unitlength{1cm}\begin{picture}(0,0)\put(0,-2.5){\scalebox{-1}[1]{\includegraphics[clip,width=3cm]{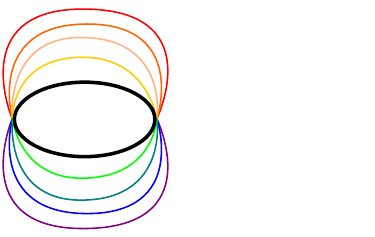}}}\end{picture}}
}
\newcommand{\fancyendfooter}{%
    \fancyfoot[RO]{\setlength\unitlength{1cm}\begin{picture}(0,0)\put(0,-2.5){\scalebox{-1}[1]{\includegraphics[clip,width=3cm]{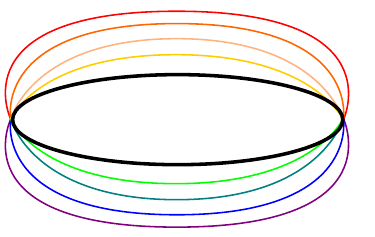}}}\end{picture}}
}
\DeclareRobustCommand{\0}{%
  \nfss@text{%
    \sbox0{0}%
    \sbox2{/}%
    \sbox4{%
      \raise\dimexpr((\ht0-\dp0)-(\ht2-\dp2))/2\relax\copy2 %
    }%
    \ooalign{%
      \hfill\copy4 \hfill\cr
      \hfill0\hfill\cr
    }%
    \vphantom{0\copy4 }%
  }%
}
\newcommand{\btwo}{II$_B$ }
\newcommand{\bone}{I$_B$ }
\newtheorem{theorem}{Theorem}[section]
\newtheorem{lemma}[theorem]{Lemma}
\newtheorem{proposition}[theorem]{Proposition}
\newtheorem{corollary}[theorem]{Corollary}
\newtheorem{question}[theorem]{Question}
\newtheorem*{sliceribbon}{Slice-ribbon conjecture~\cite{fox}}
\newtheorem*{cassongordon}{Theorem 5.1 of~\cite{gordon}}
\newtheorem*{jefftheorem}{Theorem 1.1 of~\cite{jeff}}
\newtheorem*{maintheorem}{Theorem~\ref{maintheorem}}
\newtheorem*{secondtheorem}{Theorem~\ref{secondtheorem}}
\newtheorem*{fibrationthm}{Theorem~\ref{fibrationthm}}
\newtheorem*{mainquestion}{Question~\ref{mainquestion}}
\newtheorem*{suzuki}{Suzuki's unknotting conjecture~\cite{suzuki}}
\newtheorem*{unknotting}{Smooth $4$-dimensional unknotting conjecture {\rm{(see~\cite{kirby}, Problem 1.55(a))}}}
\newcounter{movie}
\theoremstyle{definition}
\newtheorem{definition}[theorem]{Definition}
\newtheorem{move}[movie]{Movie}
\newtheorem{remark}[theorem]{Remark}
\newtheorem*{claim}{Claim}
\newenvironment{innerproof}{\proof}{\endproof}
\newcommand{\id}{\text{id}}
\let\t\relax
\newcommand{\t}{\mathrm}
\newcommand{\boundary}{\partial}
\newcommand{\proj}{\t{proj}}
\newcommand{\Z}{\mathbb{Z}}
\newcommand{\R}{\mathbb{R}}
\newcommand{\pt}{\mathrm{pt}}
\newcommand{\into}{\hookrightarrow}
\newcommand{\F}{\mathcal{F}}
\newcommand{\G}{\mathcal{G}}
\begin{document}
\title[Extending fibrations to ribbon disk complements]{Extending fibrations of knot complements to ribbon disk complements}
\author[Maggie Miller]{Maggie Miller}                    

\address{Department of Mathematics, Princeton University (2018)}
\email{maggie.miller.math@gmail.com}

\maketitle
\thispagestyle{title}
\begin{abstract}
We show that if $K$ is a fibered ribbon knot in $S^3=\boundary B^4$ bounding a ribbon disk $D$, then given an extra transversality condition the fibration on $S^3\setminus\nu(K)$ extends to a fibration of $B^4\setminus\nu(D)$. This partially answers a question of Casson and Gordon. In particular, we show the fibration always extends when $D$ has exactly two local minima. More generally, we construct movies of singular fibrations on $4$-manifolds and describe a sufficient property of a movie to imply the underlying $4$-manifold is fibered over $S^1$.

\end{abstract}
\setcounter{tocdepth}{1}
\setcounter{equation}{0}

\section{Introduction\label{ch:intro}}

The term, ``3.5-dimensional topology," is often used to refer to the study of knots in $S^3$ which bound smooth disks embedded in $B^4$ and related topics. We recall some basic definitions.

\begin{definition}

A knot $K$ in $S^3$ bounding a disk $D$ smoothly embedded in $B^4$ is said to be {\emph{slice}}.

If for some $D$, inclusion causes $\pi_1(S^3\setminus K)$ to surject onto $\pi_1(B^4\setminus D)$, then $K$ is said to be {\emph{homotopy-ribbon}}.

If for some $D$, the radial height function of $B^4$ is Morse on $D$ with no local maxima, then $K$ is said to be {\emph{ribbon}}.
\end{definition}

From definitions, we have the following order of inclusion. \[\{\text{Ribbon knots}\}\subset\{\text{Homotopy-ribbon knots}\}\subset\{\text{Slice knots}\}.\]

Whether any of these inclusions are proper is a long-standing open question.

\begin{sliceribbon}
Every slice knot in $S^3$ is ribbon.
\end{sliceribbon}

One challenge in studying this problem is that it is difficult to show that a knot $K$ is slice without implying that $K$ is ribbon, and conversely that it is difficult to obstruct $K$ from being ribbon without obstructing $K$ from being slice. Casson and Gordon~\cite{gordon} found an obstruction to a {\emph{fibered}} knot $K$ being homotopy-ribbon.

\begin{definition}
Let $K$ be a knot in $S^3$. Then $K$ is {\emph{fibered}} if $S^3\setminus\nu(K)$ is a bundle $\mathring\Sigma_g\times_{\phi} S^1=\mathring{\Sigma}_g\times[0,1]/[(x,1)\sim(\phi(x),0)]$, where $\mathring\Sigma_g$ is a Seifert surface for $K$ and $\phi:\mathring{\Sigma}_g\to\mathring{\Sigma}_g$ is a surface automorphism fixing $\boundary\mathring\Sigma_g$ pointwise.
\end{definition}

The map $\phi$ is said to be the {\emph{monodromy}} of $K$. This surface automorphism is well-defined up to conjugacy in the mapping class group of $\mathring\Sigma_g$. Note $\phi$ is an automorphism of a surface with boundary, but can be extended to a map $\hat{\phi}$ of the closed surface $\Sigma_g=\mathring{\Sigma}_g\cup D^2$. We call $\hat{\phi}$ the {\emph{closed monodromy}} of $K$. We now state the general result of Casson and Gordon.

\begin{cassongordon}
A fibered knot $K$ in a homology $3$-sphere $M$ is homotopy-ribbon if and only if its closed monodromy extends over a handlebody. In particular, if $M=S^3$ and $K$ is fibered and homotopy-ribbon, then there is a homotopy $4$-ball $V$ and slice disk $D\subset V$ so that $(S^3,K)=\boundary(V,D)$ and $V\setminus\nu(D)$ is fibered by handlebodies.
\end{cassongordon}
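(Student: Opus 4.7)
This is an iff statement; I would prove each direction separately, with the harder direction being ($\Rightarrow$).

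For ($\Leftarrow$), suppose $\hat\phi$ extends to $\Phi\colon H_g \to H_g$. Form the mapping torus $Y = H_g \times_\Phi S^1$. Since $\hat\phi$ fixes the capping disk $\Sigma_g \setminus \mathring\Sigma_g$ pointwise, the boundary $\partial Y = \Sigma_g \times_{\hat\phi} S^1$ decomposes as $(\mathring\Sigma_g \times_\phi S^1) \cup (D^2 \times S^1)$, matching the canonical decomposition $M_0(K) = (M \setminus \nu(K)) \cup T$ of $0$-surgery into the knot exterior plus the surgery solid torus $T$. I would attach a $4$-dimensional $2$-handle to $Y$ along the core of $T$ with the $T$-induced framing; this undoes the $0$-surgery on the boundary, yielding $V$ with $\partial V = M$ containing a slice disk $D$ (the core of the $2$-handle) for $K$, and by construction $V \setminus \nu(D) = Y$ fibers over $S^1$ by handlebodies. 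The $\pi_1$-surjection $\pi_1(M \setminus \nu(K)) \twoheadrightarrow \pi_1(V \setminus \nu(D))$ comes from the surjection $\pi_1(\mathring\Sigma_g) \twoheadrightarrow \pi_1(H_g)$ on fibers. Verifying that $V$ is a homotopy $4$-ball reduces to a homology computation via the Wang sequence for $Y$ plus a $\pi_1$ analysis, using the hypothesis that $\hat\phi$ extends over $H_g$ together with $M$ being a homology sphere.

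For ($\Rightarrow$), suppose $K$ is homotopy-ribbon via $D \subset V$ with $V$ a homotopy $4$-ball. The fibration $M \setminus \nu(K) \to S^1$ together with the $\pi_1$-surjection hypothesis produces a surjection $\pi_1(V \setminus \nu(D)) \twoheadrightarrow \Z$; let $N$ be its kernel. Via the hypothesized surjection, $N$ is a quotient of $\pi_1(\mathring\Sigma_g) = F_{2g}$, so is finitely generated. The key technical step is to show $N \cong F_g$: this follows by computing the homology of the infinite cyclic cover $\widetilde{V \setminus \nu(D)}$ using Poincar\'e--Lefschetz duality on $V \setminus \nu(D)$ together with $V$ being a homology $4$-ball, obtaining the homology of a wedge of $g$ circles. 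A Stallings-type fibration theorem then promotes the map $V \setminus \nu(D) \to S^1$ to a fibration whose fiber is a compact $3$-manifold $H$ with $\partial H = \Sigma_g$ and $\pi_1(H) \cong F_g$; by the classification of compact $3$-manifolds with free fundamental group, $H$ is a handlebody, and the monodromy of the fibration restricts on $\partial H$ to $\hat\phi$.

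The central obstacle is the duality argument in ($\Rightarrow$) pinning down $N \cong F_g$---essentially a $4$-dimensional loop theorem for ribbon disk complements. The $\pi_1$-surjection alone only gives $N$ as a quotient of $F_{2g}$; the finer fact that $N$ is free of rank exactly $g$ is what makes the theorem deep, and is what feeds the Stallings-type argument to produce the handlebody fibration.
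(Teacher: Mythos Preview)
The paper does not prove this statement; it is quoted as Theorem~5.1 of \cite{gordon} and used as background. So there is no proof in the paper to compare against. That said, your sketch is close to Casson--Gordon's actual argument in spirit, with one real gap in the ($\Rightarrow$) direction.

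Your ($\Leftarrow$) is essentially correct and is the standard construction. Note that this construction is also what supplies the ``In particular'' clause: once you know $\hat\phi$ extends, you \emph{build} the fibered $(V,D)$ from the mapping torus. The $(V,D)$ produced need not be the one you started with in ($\Rightarrow$); indeed, whether the \emph{given} complement fibers is exactly the open Question~\ref{mainquestion} of this paper.

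The gap is in ($\Rightarrow$). You invoke ``a Stallings-type fibration theorem'' to promote the map $V\setminus\nu(D)\to S^1$ to a fibration by $3$-manifolds. Stallings' theorem is a $3$-manifold result (compact, irreducible, finitely generated kernel implies fibered); there is no such theorem available for $4$-manifolds, and if there were, Question~\ref{mainquestion} would be trivial. Casson--Gordon do not fiber the given $4$-manifold. Instead, their ``loop theorem for duality spaces'' works on the level of the infinite cyclic cover to show that the kernel of $\pi_1(\Sigma_g)\to\pi_1(\widetilde{V\setminus\nu(D)})$ is normally generated by $g$ disjoint simple closed curves on $\Sigma_g$ bounding a disk system---i.e., it is exactly the kernel of the inclusion of $\Sigma_g$ into a handlebody. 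This gives the extension of $\hat\phi$ directly, and then ($\Leftarrow$) manufactures the fibered pair. Your duality computation identifying $N$ as free of rank $g$ is the right ingredient, but the conclusion you should draw from it is ``$\hat\phi$ extends over $H_g$,'' not ``$V\setminus\nu(D)$ fibers.''
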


(Note we did not define what it means for a knot in a homology $3$-sphere to be homotopy-ribbon.)

From Casson and Gordon's proof of the above theorem, it is not obvious if the fibration on $S^3\setminus \nu(K)$ should extend over $B^4\setminus \nu(D)$ for a fixed disk $D$, or even whether $V$ is diffeomorphic to $B^4$. 

\begin{question}\label{otherquestion}
Is $V\cong B^4$?\footnote{In a recent preprint, Meier and Zupan~\cite{mzgensquare} show that the answer to Question~\ref{otherquestion} is, ``yes," in some very special circumstances. However, in general Question~\ref{otherquestion} remains wide open.}
\end{question}

In this paper, we focus on the other natural question stemming from Theorem 5.1 of~\cite{gordon}.

\begin{question}\label{mainquestion}
Fix a homotopy-ribbon disk $D$ in $B^4$ with boundary a fibered knot. Is $B^4\setminus\nu(D)$ fibered over $S^1$? (i.e. ``is $D$ fibered?")
\end{question}

If the answer to Question~\ref{mainquestion} is, ``yes'', then the $3$-dimensional fibers are immediately known to be handlebodies by the following result of Larson and Meier.

\begin{jefftheorem}
Let $D$ be a fibered slice disk in $B^4$ with fiber $H$. Then $D$ is homotopy-ribbon if and only if $H$ is a handlebody.
\end{jefftheorem}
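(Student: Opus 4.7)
The plan is to reduce both implications to the single question of whether the inclusion of fibers induces a surjection $\pi_1(\mathring\Sigma_g)\twoheadrightarrow\pi_1(H)$. First I would identify $\partial H$ with $\Sigma_g$: the 4-dimensional fibration of $B^4\setminus\nu(D)$ restricts to a fibration of $\partial(B^4\setminus\nu(D))$, which is the zero-surgery $M_K$ on $K$. Since $b_1(M_K)=1$, this fibration is unique up to sign, and its closed fiber must agree with the closed genus-$g$ fiber obtained from the $K$-fibration of $M_K$ (which actually forces $K$ itself to be fibered, via the restriction of the fibration to $S^3\setminus\nu(K)$ transverse to the glued solid torus). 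So $\partial H=\Sigma_g$ and $\mathring\Sigma_g\subset\partial H$ sits as the complement of a disk. The two fibrations give a commuting ladder of short exact sequences on $\pi_1$ with the map on the base $\mathbb{Z}$ the identity, and so by the five lemma the middle map $\pi_1(S^3\setminus\nu(K))\to\pi_1(B^4\setminus\nu(D))$ is surjective if and only if the fiber map $\pi_1(\mathring\Sigma_g)\to\pi_1(H)$ is.

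For the forward direction, suppose $H$ is a handlebody (necessarily of genus $g$ since $\partial H=\Sigma_g$). The map $\pi_1(\mathring\Sigma_g)\to\pi_1(H)$ factors as $\pi_1(\mathring\Sigma_g)\twoheadrightarrow\pi_1(\Sigma_g)\twoheadrightarrow\pi_1(H)$: the first surjection kills only the boundary commutator when capping off the Seifert surface by a disk, and the second has kernel the normal closure of the meridian disks of the handlebody. The composition is surjective, so $D$ is homotopy-ribbon.

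For the backward direction, homotopy-ribbon yields $\pi_1(\mathring\Sigma_g)\twoheadrightarrow\pi_1(H)$, which factors through $\pi_1(\Sigma_g)=\pi_1(\partial H)$ to give $\pi_1(\partial H)\twoheadrightarrow\pi_1(H)$. I would then invoke the classical 3-manifold fact that a compact orientable irreducible 3-manifold with connected boundary $\Sigma_g$ and $\pi_1$-surjective boundary inclusion is a genus-$g$ handlebody: iteratively apply the Loop Theorem to produce essential simple closed curves in $\partial H$ bounding properly embedded disks in $H$, compress along them, and induct on $g$ until the boundary is a sphere bounding a $3$-ball.

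The main obstacle will be verifying irreducibility of $H$, as the above 3-manifold result requires this hypothesis (in particular, $S^1\times S^2$ summands must be ruled out). I would establish irreducibility by identifying $\pi_2(H)\cong\pi_2(B^4\setminus\nu(D))$ via the homotopy long exact sequence of the fibration $H\to B^4\setminus\nu(D)\to S^1$ and arguing that the homotopy-ribbon disk complement is aspherical—certainly true for $D$ ribbon via the explicit 2-complex structure from the radial Morse function, and extendable to the general homotopy-ribbon case using the $\pi_1$-surjection from $S^3\setminus\nu(K)$. Vanishing of $\pi_2(H)$, combined with the Sphere Theorem and Perelman's resolution of the Poincaré conjecture, then rules out essential reducing spheres and fake balls in $H$.
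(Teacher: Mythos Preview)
Your reduction to surjectivity of $\pi_1(\mathring\Sigma_g)\to\pi_1(H)$ via the short exact sequences of the two fibrations is correct, and the forward direction is fine. The gap lies in your justification of irreducibility of $H$. You claim that $B^4\setminus\nu(D)$ is aspherical, ``certainly true for $D$ ribbon via the explicit 2-complex structure''---but having the homotopy type of a 2-complex does not force $\pi_2=0$ (consider $S^2$ itself), and in fact asphericity of ribbon-disk complements is a well-known open problem, shown by Howie to follow from the Whitehead asphericity conjecture. The proposed extension to homotopy-ribbon via the $\pi_1$-surjection from $S^3\setminus\nu(K)$ is likewise unjustified, and in any case since $W=B^4\setminus\nu(D)$ fibers with fiber $H$, asphericity of $W$ is \emph{equivalent} to that of $H$, which is exactly what you are after; passing to $W$ buys nothing without an independent argument there.

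Your strategy is easily repaired, however: irreducibility of $H$ follows directly from the surjection $\pi_1(\partial H)\twoheadrightarrow\pi_1(H)$ you already have. Any essential sphere in $H$ exhibits a free-product decomposition $\pi_1(H)\cong\pi_1(H_0)*G$ with $G\neq 1$ (invoking Poincar\'e when the sphere separates off a simply-connected piece) and with $\partial H\subset H_0$, so the image of $\pi_1(\partial H)$ lands in the factor $\pi_1(H_0)$ and cannot surject. With irreducibility in hand, your Loop-Theorem induction goes through.

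The paper's argument for this direction is genuinely different. It doubles $H$ to form the closed fiber $M=(H\cup_F\overline H)\cup B^3$ of the 2-knot $D\cup\overline D\subset S^4$, checks $\pi_1(M)\cong\pi_1(H)$ via Seifert--van Kampen and the homotopy-ribbon surjection, and invokes a lemma of Cochran: a closed orientable $M$ satisfies $M\cong\#_h S^1\times S^2$ iff the canonical map $H_3(M)\to H_3(\pi_1(M))$ vanishes. Vanishing is established by factoring through $H_3$ of the infinite cyclic cover of a bounding 5-manifold $(B^4\times I)\setminus(D\times I)$, which is $H_3(H)=0$ since $\partial H\neq\emptyset$. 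Both routes ultimately lean on the Poincar\'e conjecture; the paper's trades the Loop Theorem for a homological obstruction and a 5-dimensional bounding argument, while yours (once patched) stays inside 3-manifold topology.
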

\begin{remark}\label{strongremark}
Larson and Meier use a different (and without the slice-ribbon conjecture, possibly more restrictive) definition of homotopy-ribbon, which we might call {\emph{strongly homotopy-ribbon}}. A disk $E$ is strongly homotopy-ribbon if $B^4\setminus\nu(E)$ admits a handle decomposition without $3$- or $4$-handles. Their proof works for our stated definition of homotopy-ribbon as well (see section~\ref{sec:questions}), 
yielding (disk fibered by handlebodies $\implies$ strongly homotopy-ribbon) and (disk fibered and homotopy-ribbon $\implies$ fibers are handlebodies). We conclude that for the class of fibered disks in $B^4$, homotopy-ribbon and strongly homotopy-ribbon are equivalent conditions.
\end{remark}

\begin{remark}
We remark for the general reader that although the sets of fibered knots and ribbon knots in $S^3$ are very special (and their intersection even more so), the set of prime fibered ribbon knots is infinite. For example, the three-stranded pretzel knots $P(\pm2,n,-n)$ for any odd $|n|>1$ are prime, fibered~\cite{detecting}, and ribbon. In fact, these knots even admit a ribbon disk with only two local minima (the index-$1$ critical point ``cuts'' the $\pm2$-twisted strand).

There are a total of $74$ prime fibered ribbon knots of fewer than thirteen crossings~\cite{livingston}.
\end{remark}

In this paper, we prove the following relevant theorem.

\begin{theorem}\label{maintheorem}
Let $K\subset S^3$ be a fibered knot bounding a ribbon disk $D\subset B^4$. View the index-$1$ critical points of $D$ as fission bands attached to $K$. If the bands can be isotoped to be transverse to the fibration on $S^3\setminus\nu(K)$, then the fibration extends to a fibration by handlebodies on $B^4\setminus\nu(D)$.
\end{theorem}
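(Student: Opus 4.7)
The plan is to build the extended fibration $\Pi \colon B^4 \setminus \nu(D) \to S^1$ directly, using the radial Morse function on $D$ to organize the construction. I view $D$ through its fission presentation: $n$ bands $\beta_1, \dots, \beta_n$ in $S^3$ attached to $K$ which produce a split unlink $U = U_1 \sqcup \cdots \sqcup U_{n+1}$, capped off by standard disks $\Delta_j$ pushed radially into $B^4$. The fibration extension happens in two stages --- first across the $\beta_i$ within $S^3$, then radially inward across the $\Delta_j$ in $B^4$.

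\textbf{Band extension.} By hypothesis, each $\pi \circ \beta_i \colon \beta_i \to S^1$ is a submersion; since $\beta_i \cong D^2$ is simply connected, this composition lifts to a function $\beta_i \to \R$ with no interior critical points, whose level sets foliate $\beta_i$ by proper arcs. I would thicken this arc foliation to a product structure on a regular neighborhood $\nu(\beta_i) \subset S^3$ compatible with $\pi$ on $\partial \nu(\beta_i) \setminus \nu(K)$, producing an extended fibration $\pi' \colon S^3 \setminus \nu(U) \to S^1$. Each fiber $F'$ of $\pi'$ is obtained from the corresponding fiber $F$ of $\pi$ by excising the arcs $F \cap \beta_i$ and attaching rectangular strips from $\nu(\beta_i)$; $F'$ is a Seifert surface for the split unlink $U$.

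\textbf{Minimum cap extension.} For each cap $\Delta_j$, the component $U_j$ of $U$ is unknotted in $S^3$, and a tubular neighborhood of $\Delta_j$ in $B^4$ is a 4-ball meeting $S^3$ in a 3-ball containing $U_j$. The complement of $\nu(\Delta_j)$ in this local 4-ball is diffeomorphic to $S^1 \times D^3$, which naturally fibers over $S^1$. Using the boundary data provided by $\pi'$, I would extend across each local 4-ball to match this $S^1$ factor, and assemble everything into a global fibration $\Pi \colon B^4 \setminus \nu(D) \to S^1$ extending $\pi$. Since $D$ is ribbon (hence homotopy-ribbon) and $\Pi$ is a fibration, the fibers of $\Pi$ are handlebodies by Larson and Meier's Theorem 1.1.

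\textbf{Main obstacle.} The hardest part is verifying the band-step assembly: ensuring the locally-defined product structures on the $\nu(\beta_i)$ glue into a global smooth fibration without introducing critical points of $\Pi$, especially when the arcs $F \cap \beta_i$ on a single fiber $F$ are geometrically complicated (e.g., nested, or interacting with multiple bands). The transversality hypothesis is precisely what allows the arcs to be straightened into a local foliation compatible with neighboring fibers, so this is where the hypothesis is used essentially. The minimum cap extensions and the handlebody identification are comparatively routine once the band step is carried out.
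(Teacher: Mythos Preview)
Your band extension step contains a fundamental gap: you claim to produce a fibration $\pi' \colon S^3 \setminus \nu(U) \to S^1$, but the $(n{+}1)$-component unlink $U$ is not a fibered link once $n \geq 1$. Algebraically, the map $\pi_1(S^3 \setminus \nu(U)) \cong F_{n+1} \to \Z$ sending each meridian to $1$ has infinitely generated kernel, so it cannot be $\pi_1$ of a compact fiber surface. Geometrically, each band core approaches the binding $K$ asymptotically along two specific pages $F_{\theta_0^i}$ and $F_{\theta_1^i}$; as $\theta$ crosses one of these values the number of arcs in $F_\theta \cap \beta_i$ jumps by one, so the topology of your proposed level set $F'_\theta$ genuinely changes. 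Your map $\pi'$ is therefore forced to have critical points --- it is at best a circle-valued Morse function, not a fibration --- and this is not the technical gluing issue you flag under ``Main obstacle'' but an honest topological obstruction.

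This is exactly the difficulty the paper's machinery is built to handle. Rather than a fibration on each radial level, the paper constructs a \emph{singular} fibration $\F_t$ on $h^{-1}(t)$, tracks the cone and dot singularities through a ``movie'' as $t$ varies, and proves (Theorem~\ref{fibrationthm}, Lemma~\ref{usemoviealt}) that when the singularities carry compatible types they assemble into smooth $3$-dimensional fibers $H_\theta = \cup_t \F_t^{-1}(\theta)$. The transversality hypothesis is used to control the cones introduced at the bands (the ``band movie''); the minima introduce further cones and dots (the ``disk movie''); and a final cancellation stage removes them in pairs. Your minimum-cap step inherits the same defect: since $\pi'$ is singular near each $U_j$ rather than the standard disk fibration, the boundary data does not match the $S^1 \times D^3$ product without the intervening analysis of singularities. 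Your closing appeal to Larson--Meier for the handlebody conclusion is correct and matches the paper, but the construction of $\Pi$ itself needs substantially more than a direct two-stage extension.
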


To parse Theorem~\ref{maintheorem}, we must understand how bands attached to a knot can describe a ribbon disk. Let $K$ be a fibered knot in $S^3$. Fix a ribbon disk $D\subset B^4$ with $\boundary D=K$. View $B^4=S^3\times [0,3]/(S^3\times 0\sim\pt)$, where $S^3=\boundary B^4=S^3\times 3$. We call the projection function $h:B^4\to[0,3]$ the height function on $B^4$. A point $(x,t)\in S^3\times[0,3]/\sim$ is said to be at height $t$.

 Isotope $D$ so that:
\begin{itemize}
\item $D\subset S^3\times[1,3]$.
\item For $t_1>t_2>\ldots>t_n\in(2,3)$, $D\cap (S^3\times t_i)=$ a link with a band attached (referred to as a ribbon band). We will always view these bands as ``fission'' bands, meaning resolving the band increases the number of components of the underlying link. In this setting, this means we view the band at height $t_i$ as attached to a link equivalent to $D\cap (S^3\times(t_i+\epsilon))$.
\item For $s_1>\ldots>s_{n+1}\in(1,2)$, $D\cap (S^3\times s_i)$ is an $(n+1-i)$-component unlink and a disjoint disk (referred to as a minimum disk).
\item For all other $u\in(s_{n+1},3]$, $D\cap (S^3\times u)$ is a nonsingular link. For all $u\in[0,s_{n+1})$, $D\cap (S^3\times u)=\emptyset$.
\end{itemize}

In plain English, we isotope the disk $D$ so that, starting from the boundary $S^3=S^3\times 3$ and moving downward, we first see all the saddle points and then the minima of $D$. 
We project the ribbon bands into $h^{-1}=\boundary B^4=S^3$ to obtain a ribbon diagram $\mathcal{D}$ for $D$. (Assume generically that all ribbon bands are disjoint in $\mathcal{D}$.) Call the bands $b_1,\ldots, b_n$, where $b_i$ corresponds to the saddle at $t_i$.

See Figure~\ref{fig:diskexample1} for an example of a ribbon disk and Figure~\ref{fig:diskexample2} for a schematic of $D$ in $B^4$. Whenever we refer to a disk being determined by bands (or index-1 critical points of a disk corresponding to bands), we are referring to this description of a ribbon disk.

\begin{figure}\begin{centering}
\includegraphics[width=.3\textwidth]{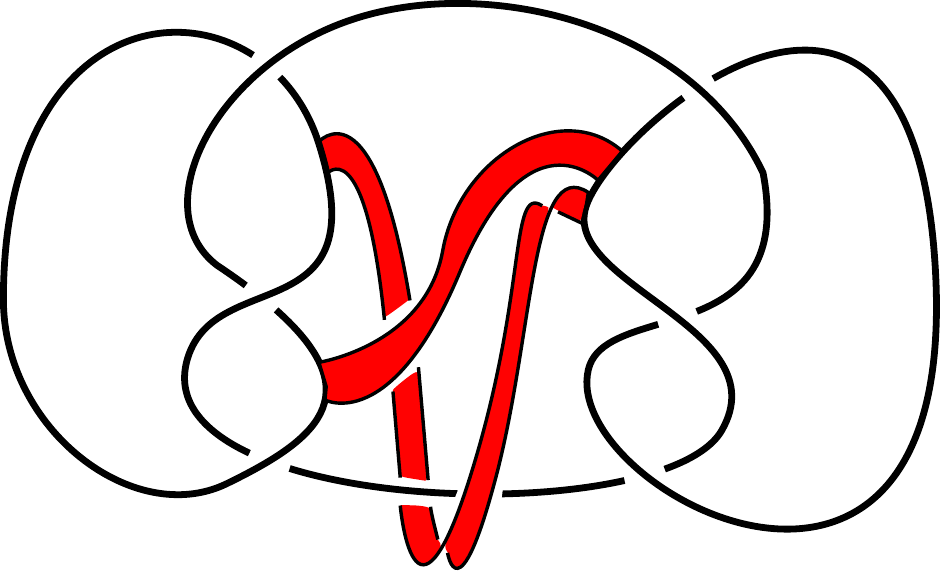}\\\hspace{.5in}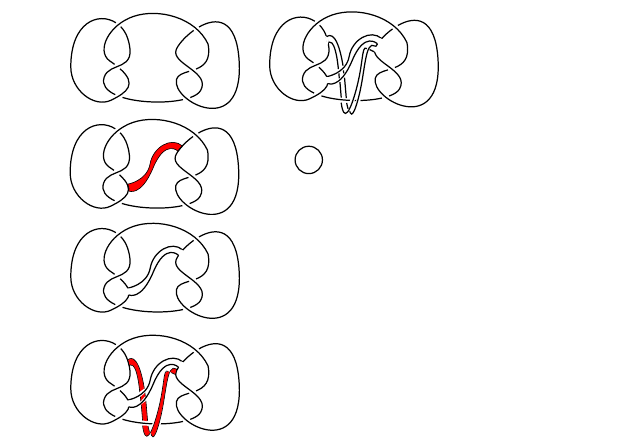
\caption[A diagram of a ribbon disk $D$ with boundary the square knot]{Top: a diagram of a ribbon disk $D$ with boundary the square knot. Below: Some cross-sections of $D\subset B^4$. Recall $h^{-1}(3)$ (that is, $t=3$) is the boundary $S^3$ of $B^4$.}
\label{fig:diskexample1}
\end{centering}\end{figure}

\begin{figure}\begin{centering}
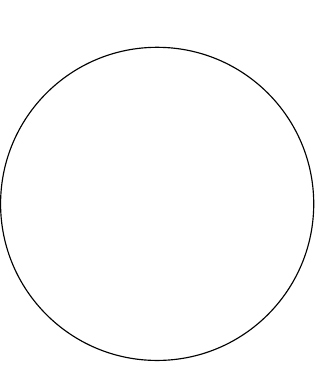
\caption[A schematic of a ribbon disk in $B^4$.]{A schematic of a ribbon disk in $B^4$. In this example, the disk has $3$ minima and $2$ index-$1$ critical points (bands), as in Figure~\ref{fig:diskexample1}.}
\label{fig:diskexample2}
\end{centering}\end{figure}

Now we discuss the transversality condition Theorem~\ref{maintheorem} places on fission bands.

\begin{definition}
Let $b$ be a band attached to a fibered knot $K$, where $S^3\setminus\nu(K)=\mathring{\Sigma}\times I/\sim$. Say $b=\gamma\times[0,\epsilon]$ for some arc $\gamma\in S^3$ with $(\boundary \gamma )\times[0,\epsilon]\subset K$. We say $b$ is transverse to the fibration on $S^3\setminus\nu(K)$ if $\mathring\gamma$ is transverse to every $\mathring{\Sigma}\times t$.  Given a transverse band $b$, we may (and will) always assume $b\cap (\Sigma\times t)=\{$finite set of points$\}\times[0,\epsilon]$.

In Theorem~\ref{maintheorem}, the hypothesis is that {\emph{all}} bands in the description of $D$ are disjoint and  {\emph{simultaneously}} transverse to the fibration on $S^3\setminus\nu(K)$.
\end{definition}

The following theorem follows almost immediately from the proof of Theorem~\ref{maintheorem}.
\begin{theorem}\label{secondtheorem}
Let $K$ be a fibered knot in $S^3$. Let $J\subset S^3$ be a knot so that there is a {\emph{ribbon concordance}} from $K$ to $J$ (i.e.\ there is an annulus $A$ properly embedded in $S^3\times I$ with boundary $(K\times 1)\sqcup (\overline{J}\times 0)$ so that $\proj_I|_{A}$ is Morse with no local maxima). 
Say the index-$1$ critical points of $A$ correspond to fission bands attached to $K$. If the bands can be isotoped to be transverse to the fibration on $S^3\setminus\nu(K)$, then $(S^3\times I)\setminus\nu(A)$ is fibered by compression bodies.
\end{theorem}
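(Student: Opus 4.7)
The plan is to adapt the proof of Theorem \ref{maintheorem} to the setting of $S^3\times I$ and the ribbon concordance annulus $A$, in place of $B^4$ and the ribbon disk $D$. The construction used for Theorem \ref{maintheorem} is local near each band and near each local minimum of the radial Morse function, and the transversality hypothesis is identical here, so I expect the same local extensions of the fibration to apply verbatim. Concretely, I would begin with the given fibration of $S^3\times\{1\}\setminus\nu(K)$ and push it downward through a collar of each band in turn, using transversality to extend the fibration across each band region. By the end of this step, the fibration restricts at some level $t$ above the lowest minimum to a fibration of $S^3\times\{t\}\setminus\nu(L)$, where $L$ is the link obtained from $K$ by all band surgeries; by construction $L$ is $J$ together with a disjoint collection of unknots. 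I would then cap off each unknot component of $L$ using its minimum disk in $A$, exactly as in the proof of Theorem \ref{maintheorem}, but \emph{without} capping off the component $J$. This is the sole departure from the ribbon disk case. The resulting fibration on $(S^3\times I)\setminus\nu(A)$ restricts to a genuine fibration of $S^3\times\{0\}\setminus\nu(J)$, so $J$ is a fibered knot.

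It remains to identify the $3$-dimensional fibers and establish the genus bound. By construction, each fiber is built from a thickened $2$-dimensional fiber of $S^3\setminus\nu(K)$ by attaching one $3$-dimensional $1$-handle per band and one $3$-dimensional $2$-handle per minimum, with no $3$-handles since we did not cap off $J$. This is precisely the handle structure of a compression body, with top boundary a closed fiber for $K$ and bottom boundary a closed fiber for $J$. The inequality $g(J)\le g(K)$ then follows by a direct Euler characteristic computation: since $\proj_I|_A$ has no local maxima, the identity $\chi(A)=0$ gives that the number of minima equals the number of saddles, and tallying how each band and each minimum contribute to the fiber for $J$ shows that its Euler characteristic is at least that of the fiber for $K$.

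The main obstacle I foresee is essentially bookkeeping: verifying that the local fibrations at each band and each minimum glue compatibly with each other and with the given fibration on $S^3\times\{1\}\setminus\nu(K)$. This is the technical content of the proof of Theorem \ref{maintheorem}, and omitting the final cap-off of $J$ cannot introduce any new gluing issue since $J$ is pre-existing and never modified. Hence the theorem should follow by running the argument for Theorem \ref{maintheorem} with this single modification.
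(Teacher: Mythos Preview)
Your overall strategy is correct and matches the paper's proof: run the construction of Theorem~\ref{maintheorem} verbatim---band movies, then disk movies, then cancellation movies---omitting only the final cap-off so that $\F_0$ is the nonsingular fibration of $S^3\setminus\nu(J)$, and then invoke Lemma~\ref{usemoviealt}.

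However, your handle accounting for the fiber $H$ is not quite right. The band and disk movies do not contribute ``one $1$-handle per band and one $2$-handle per minimum''; each band movie introduces several cones (the number depending on how many times the band pierces the leaves), and each disk movie introduces several type~II cones \emph{and} type~III dots. In particular, your claim that there are ``no $3$-handles since we did not cap off $J$'' is mistaken: the $3$-handles arise from the type~III dots in the disk movies, not from the final cap-off. The paper's argument (Subsection~\ref{geomcancel}) shows that the $1$-handles from the band movies are geometrically cancelled by $2$-handles from the disk movies, yielding $H\cong(F\times I)\cup\{2\text{- and }3\text{-handles}\}$---which is the correct handle description of a compression body. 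Once you have that, the genus inequality $g(J)\le g(K)$ is immediate from the compression body structure, so your separate Euler-characteristic computation is unnecessary (though not wrong in spirit).
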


We remark that in Theorem~\ref{secondtheorem}, the fact that $J$ is fibered follows immediately from~\cite[Prop. 5]{miyazaki}. Theorem~\ref{secondtheorem} is truly a 4-dimensional theorem about the ribbon concordance between $K$ and $J$.

Both Theorems~\ref{maintheorem} and~\ref{secondtheorem} are applications of our main theorem:

\begin{theorem}\label{fibrationthm}
Let $\F_t$ be a valid movie of singular fibrations on $Z^4$ so that $\F_0$ and $\F_1$ are nonsingular. 
Then $Z^4$ is fibered over $S^1$ by $\{H_\theta\}$, where $H_\theta=\cup_t \F_t^{-1}(\theta)$.
\end{theorem}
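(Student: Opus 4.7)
The plan is to assemble the movie $\{\F_t\}$ into a single smooth map $F:Z^4\to S^1$ defined pointwise by $F(x)=\F_{t(x)}(x)$, where $t(x)\in[0,1]$ is the time coordinate of $x$ in the movie presentation of $Z^4$, and then to verify that $F$ is a locally trivial fibration whose fiber over $\theta$ is precisely $H_\theta=\bigcup_t \F_t^{-1}(\theta)$.

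First I would use the fact that $\{\F_t\}$ is a \emph{valid} movie to argue that $F$ is a well-defined smooth map on all of $Z^4$. A movie presentation gives $Z^4$ the structure of a $1$-parameter family $\{Y_t\}$ of $3$-manifold slices glued along the parameter $t$, with $\F_0$ and $\F_1$ nonsingular providing unambiguous smooth endpoints. Validity should be designed precisely so that, across each singular time of the $\F_t$, the slicewise maps fit together into a single smooth function; without that condition an uncontrolled bifurcation of singular fibers could easily introduce discontinuities or singularities of $F$ along the $t$-direction.

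Next I would verify that $F$ is a submersion onto $S^1$. At any point $x_0$ where $\F_{t_0}$ is a submersion at $x_0$, the restriction of $dF_{x_0}$ to $T_{x_0}Y_{t_0}$ is already surjective, so there is nothing to do. The interesting points are those where $x_0$ is a critical point of $\F_{t_0}$; here $dF$ restricted to the slice is not surjective, but the validity hypothesis should equip such a point with a local model in which varying $t$ moves $F(x_0)$ to first order, so that the full derivative $dF_{x_0}$ still has rank one. I expect this to be the main obstacle: one has to show that every singularity permitted in a valid movie is tamed by transversality in the $t$-direction, and this is really the content of what ``valid'' should mean. The nonsingularity of $\F_0$ and $\F_1$ ensures that no additional subtlety arises at the endpoints of the parameter.

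Once $F$ is established as a surjective submersion, I would invoke Ehresmann's fibration theorem (in an appropriate version for compact manifolds with boundary or corners, applied relative to $\partial Z^4$) to conclude that $F$ is a locally trivial fibration over $S^1$. Since $F^{-1}(\theta)=H_\theta$ by construction, this gives exactly the desired fibration of $Z^4$ by the $3$-manifolds $\{H_\theta\}$, and the smoothness of $H_\theta$ as a manifold is an automatic consequence of the submersion property.
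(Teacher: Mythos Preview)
Your outline matches the paper's approach in spirit: assemble the movie into the smooth map $F(x)=\F_{h(x)}(x)$ (smoothness is already part of the definition of a movie), argue that each $H_\theta=F^{-1}(\theta)$ is a nonsingular $3$--manifold, and conclude that $\{H_\theta\}$ fibers $Z^4$. The paper phrases the last step as ``transversely oriented codimension-$1$ foliation with compact leaves'' rather than Ehresmann, but that difference is cosmetic.

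The gap is that you have skipped the actual content. Your assertion that validity makes ``varying $t$ move $F(x_0)$ to first order'' is correct along the \emph{interior} of an arc in the singularity chart --- that is exactly the never-vertical condition --- but the work lies at the \emph{endpoints} of arcs, and you do not touch any of them. At a naive Cerf birth/death cusp the local model $F=z^{3}+(t-t_{0})z+Q(x,y)$ has $dF=0$ at the cusp point; it is only after the shear encoded in a valid chart (so that the cusp is tangent to a non-vertical line and the two arcs carry cancelling types such as \0, I or II, III) that $dF$ acquires a nonzero $dt$--component. The paper then handles the remaining endpoint configurations case by case: the points where three arcs meet (an interior singularity merging with a boundary one) and the non-cusp births/deaths occurring at critical values of $h$, where there is no smooth slice $h^{-1}(t_0)$ to restrict to at all. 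Your appeal to Ehresmann also has a boundary problem you do not acknowledge: along the vertical boundary $\bigcup_{t}\partial h^{-1}(t)$ the restriction of $F$ has half-cone, half-dot, and bowl critical points, so it is not a submersion on that stratum and the usual relative Ehresmann theorem does not apply; the paper sidesteps this by verifying smoothness of each $H_\theta$ directly rather than invoking a packaged fibration criterion.
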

We will define a valid movie of singular fibrations in section~\ref{sec:fibrations}.

\begin{remark}
The proof of Theorem~\ref{maintheorem} is constructive. From a minimum-genus surface $S\subset S^3$ for $K$ and the fission bands defining $D$, it is possible to recover the attaching circles on $S$ which define the handlebody to which the fibration extends. See section~\ref{sec:conclusion}.
\end{remark}

In Theorem~\ref{maintheorem}, note that the fission bands are transverse to the open book on $S^3$ induced by the fibration of $S^3\setminus\nu(K)$ in their {\emph{interior}}. Near the binding of the open book on $S^3$, the bands approach $K$ asymptotically tangent to some leaf (see Figure~\ref{fig:bandinsurface}, bottom right).

\begin{remark}\label{remark:infiber}
If a fission band attached to a fibered knot $K$ is contained in a fiber of $S^3\setminus\nu(K)$, we may perturb the band to be transverse to the fibers as in Figure~\ref{fig:bandinsurface}.
\end{remark}

\begin{figure}\begin{centering}
{\center{
\begin{tabular}{cc}
\begingroup%
  \makeatletter%
  \providecommand\color[2][]{%
    \errmessage{(Inkscape) Color is used for the text in Inkscape, but the package 'color.sty' is not loaded}%
    \renewcommand\color[2][]{}%
  }%
  \providecommand\transparent[1]{%
    \errmessage{(Inkscape) Transparency is used (non-zero) for the text in Inkscape, but the package 'transparent.sty' is not loaded}%
    \renewcommand\transparent[1]{}%
  }%
  \providecommand\rotatebox[2]{#2}%
  \newcommand*\fsize{\dimexpr\f@size pt\relax}%
  \newcommand*\lineheight[1]{\fontsize{\fsize}{#1\fsize}\selectfont}%
  \ifx\svgwidth\undefined%
    \setlength{\unitlength}{160.08491216bp}%
    \ifx\svgscale\undefined%
      \relax%
    \else%
      \setlength{\unitlength}{\unitlength * \real{\svgscale}}%
    \fi%
  \else%
    \setlength{\unitlength}{\svgwidth}%
  \fi%
  \global\let\svgwidth\undefined%
  \global\let\svgscale\undefined%
  \makeatother%
  \begin{picture}(1,0.39127791)%
    \lineheight{1}%
    \setlength\tabcolsep{0pt}%
    \put(0,0){\includegraphics[width=\unitlength,page=1]{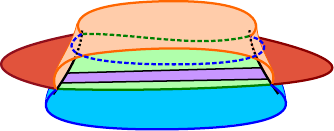}}%
    \put(1.52746567,0.25324179){\color[rgb]{0.50196078,0,0.50196078}\makebox(0,0)[lt]{\lineheight{0}\smash{\begin{tabular}[t]{l}$b$\end{tabular}}}}%
    \put(0,0){\includegraphics[width=\unitlength,page=2]{Fig1a.pdf}}%
    \put(0,0){\includegraphics[width=\unitlength,page=3]{Fig1a.pdf}}%
  \end{picture}%
\endgroup%
&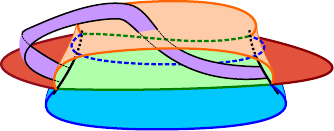\\
\begingroup%
  \makeatletter%
  \providecommand\color[2][]{%
    \errmessage{(Inkscape) Color is used for the text in Inkscape, but the package 'color.sty' is not loaded}%
    \renewcommand\color[2][]{}%
  }%
  \providecommand\transparent[1]{%
    \errmessage{(Inkscape) Transparency is used (non-zero) for the text in Inkscape, but the package 'transparent.sty' is not loaded}%
    \renewcommand\transparent[1]{}%
  }%
  \providecommand\rotatebox[2]{#2}%
  \newcommand*\fsize{\dimexpr\f@size pt\relax}%
  \newcommand*\lineheight[1]{\fontsize{\fsize}{#1\fsize}\selectfont}%
  \ifx\svgwidth\undefined%
    \setlength{\unitlength}{77.02865541bp}%
    \ifx\svgscale\undefined%
      \relax%
    \else%
      \setlength{\unitlength}{\unitlength * \real{\svgscale}}%
    \fi%
  \else%
    \setlength{\unitlength}{\svgwidth}%
  \fi%
  \global\let\svgwidth\undefined%
  \global\let\svgscale\undefined%
  \makeatother%
  \begin{picture}(1,0.66842116)%
    \lineheight{1}%
    \setlength\tabcolsep{0pt}%
    \put(0,0){\includegraphics[width=\unitlength,page=1]{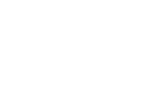}}%
    \put(0.7184217,0.41315788){\color[rgb]{0.50196078,0,0.50196078}\makebox(0,0)[lt]{\lineheight{0}\smash{\begin{tabular}[t]{l}$b$\end{tabular}}}}%
    \put(0,0){\includegraphics[width=\unitlength,page=2]{Fig1c.pdf}}%
    \put(0,0){\includegraphics[width=\unitlength,page=3]{Fig1c.pdf}}%
  \end{picture}%
\endgroup%
&
\begingroup%
  \makeatletter%
  \providecommand\color[2][]{%
    \errmessage{(Inkscape) Color is used for the text in Inkscape, but the package 'color.sty' is not loaded}%
    \renewcommand\color[2][]{}%
  }%
  \providecommand\transparent[1]{%
    \errmessage{(Inkscape) Transparency is used (non-zero) for the text in Inkscape, but the package 'transparent.sty' is not loaded}%
    \renewcommand\transparent[1]{}%
  }%
  \providecommand\rotatebox[2]{#2}%
  \newcommand*\fsize{\dimexpr\f@size pt\relax}%
  \newcommand*\lineheight[1]{\fontsize{\fsize}{#1\fsize}\selectfont}%
  \ifx\svgwidth\undefined%
    \setlength{\unitlength}{88.98834409bp}%
    \ifx\svgscale\undefined%
      \relax%
    \else%
      \setlength{\unitlength}{\unitlength * \real{\svgscale}}%
    \fi%
  \else%
    \setlength{\unitlength}{\svgwidth}%
  \fi%
  \global\let\svgwidth\undefined%
  \global\let\svgscale\undefined%
  \makeatother%
  \begin{picture}(1,0.57858795)%
    \lineheight{1}%
    \setlength\tabcolsep{0pt}%
    \put(0,0){\includegraphics[width=\unitlength,page=1]{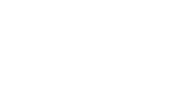}}%
    \put(0,0){\includegraphics[width=\unitlength,page=2]{Fig1d.pdf}}%
    \put(0.62186837,0.35763109){\color[rgb]{0.50196078,0,0.50196078}\makebox(0,0)[lt]{\lineheight{0}\smash{\begin{tabular}[t]{l}$b$\end{tabular}}}}%
    \put(0,0){\includegraphics[width=\unitlength,page=3]{Fig1d.pdf}}%
  \end{picture}%
\endgroup%
\end{tabular}}}
\caption[We perturb a band in one leaf to be transverse to a fibration.]{Top left: a band $b$ lies in a single Seifert surface for a fibered knot $K$. Top right: Perturb the band near one component of $b\cap K$ so that $b$ is transverse to each leaf. (We can think of this perturbation as going halfway around $S^1=\pi(S^3\setminus K)$, where $\pi$ is the projection map from $S^3=\mathring{\Sigma}\times_{\phi} S^1$ to $S^1$.) Bottom left: side view of $b$ in one leaf, near one end of $b\cap K$. Bottom right: side view after perturbing $b$.}
\label{fig:bandinsurface}
\end{centering}
\end{figure}

Now we discuss some corollaries of Theorem~\ref{maintheorem}.

\begin{corollary}\label{diskcor}
Let $D$ be a ribbon disk in $B^4$ with exactly two minima. If $\boundary D$ is a fibered knot, then the fibration extends to a fibration of $B^4\setminus\nu(D)$ by handlebodies.
\end{corollary}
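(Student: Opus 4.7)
The strategy is to verify the hypothesis of Theorem \ref{maintheorem}: that the fission-band data describing $D$ can be isotoped to be transverse to the fibration on $S^3\setminus\nu(K)$. The key point is that under the hypothesis ``exactly two minima,'' this data consists of a single band, and for a single band, transversality reduces to a general-position argument rather than a simultaneous condition.

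First, a critical-point count. Since $D$ is a ribbon disk, it has no local maxima, so if $D$ has $n_0$ minima and $n_1$ saddles, then $n_0 - n_1 = \chi(D) = 1$. With $n_0 = 2$, we get $n_1 = 1$. Hence $D$ is built from $K$ by attaching a single fission band $b$ and then capping off the resulting $2$-component unlink with two disks (the minima).

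Next I would show that $b$ can be isotoped to be transverse. Write $b = \gamma \times [0,\epsilon]$ for some arc $\gamma$ with $\boundary\gamma \subset K$. Away from $K$, the projection $\pi\colon S^3 \setminus \nu(K) \to S^1$ is a submersion, so transversality of the $1$-manifold $\mathring\gamma \cap (S^3 \setminus \nu(K))$ to the fibers of $\pi$ is open and dense and can be arranged by a small ambient isotopy of $b$ supported away from $K$. Near each endpoint of $\gamma$ on $K$, the core arc emerges from the binding; if it does so tangent to a leaf (as in Figure \ref{fig:bandinsurface}, bottom left), the perturbation of Remark \ref{remark:infiber} applied in a small collar of the endpoint makes $\mathring\gamma$ transverse to every fiber it meets there (as in Figure \ref{fig:bandinsurface}, bottom right). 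Because there is only one band, no simultaneous-transversality condition among multiple bands needs to be checked, and these local perturbations at the two endpoints of $\gamma$ patch freely with the interior perturbation into a single ambient isotopy of $b$.

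With $b$ transverse to the fibration, Theorem \ref{maintheorem} applies directly and yields the extension to a fibration of $B^4 \setminus \nu(D)$ by handlebodies. The main obstacle, to the extent there is one, is the behavior near the binding $K$: the core arc must emerge from a $1$-dimensional subset of $S^3$, and a priori it could do so tangent to a leaf. But Remark \ref{remark:infiber} describes precisely the local perturbation needed, and with a single band there is no risk that separate band-perturbations must be coordinated, so the reduction to Theorem \ref{maintheorem} is immediate.
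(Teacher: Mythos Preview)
Your transversality step has a genuine gap. You claim transversality of $\mathring\gamma$ to the fibers of $\pi$ is ``open and dense and can be arranged by a small ambient isotopy,'' but this conflates transversality to a single submanifold with transversality to an entire foliation. Being transverse to \emph{every} leaf $\mathring\Sigma\times t$ is the same as requiring $\pi|_{\mathring\gamma}:\mathring\gamma\to S^1$ to have no critical points. A generic map from a $1$-manifold to $S^1$ is Morse, and nondegenerate critical points are stable under $C^1$-small perturbation; equivalently, a generic arc in a fibered $3$-manifold has isolated Morse-type tangencies with the leaves, and these persist under small isotopy. So the condition is open but not dense, and a small perturbation cannot remove such tangencies. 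The issue is not simultaneity across multiple bands; it already arises for a single arc.

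The paper's argument supplies the missing idea, and it is not general position. It invokes \cite[Prop.~3.1]{thompson}: since resolving $K$ along $b$ yields the $2$-component unlink with $\chi(U\sqcup U)=2>1\ge\chi(K)$, the band $b$ can be isotoped into a maximal-$\chi$ Seifert surface $S$ for $K$, and because $K$ is fibered, $S$ is a fiber. Only then does Remark~\ref{remark:infiber} apply---to the whole band lying in a leaf, not just near its endpoints---to perturb $b$ transverse, after which Theorem~\ref{maintheorem} finishes. This Scharlemann--Thompson input is substantive (the paper itself remarks it is on the same level as Scharlemann's theorem that a two-minimum ribbon disk with unknotted boundary is boundary-parallel), and it is what actually uses the ``two minima'' hypothesis, via the inequality $\chi(U\sqcup U)>\chi(K)$ rather than merely via ``only one band, so no coordination needed.''
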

\begin{proof}
For a link $J$ in $S^3$, let $\chi(J)=\max\{\chi(S)\mid S\subset S^3$ is an oriented surface bounded by $J$ with no closed components$\}$. Suppose $J'$ is obtained from $J$ by orientation-preserving band surgery along a band $b$. Then if $\chi(J')>\chi(J)$, the band $b$ can be isotoped to lie in a Seifert surface $S$ for $J$ with $\chi(S)=\chi(J)$~\cite[Prop 3.1]{thompson}.

Say $K=\boundary D$. Let $b$ be a fission band for $K$ corresponding to the single saddle of $D$. Then surgering $K$ along $b$ yields the $2$-component unlink $U\sqcup U$. Note $\chi(K)\le 1<2=\chi(U\sqcup U)$, so $b$ may be isotoped to lie in a Seifert surface $S$ for $K$ with $\chi(S)=\chi(K)$. Then $S$ is a fiber surface for $K$ (see e.g.~\cite[Subsection 5B]{burde}), so the claim follows from Theorem~\ref{maintheorem} and Remark~\ref{remark:infiber}.

\end{proof}

By gluing together two ribbon disks in separate $4$-balls, we may obtain a $2$-knot.
\begin{definition}
An {\emph{$n$-knot}} is a copy of $S^n$ smoothly embedded into $S^{n+2}$.
\end{definition}
A $1$-knot is a classical knot in $S^3$, while a $2$-knot is a knotted $2$-sphere in $S^4$.

\begin{definition}
Let $D_1,D_2$ be ribbon disks  with $\boundary D_1=\boundary D_2$.
We say $\boundary D_i$ is an {\emph{equator}} of the $2$-knot $D_1\cup\overline{D_2}$ in $S^4=B^4\cup\overline{B^4}$.
\end{definition}

Note that an equator of a 2-knot $G$ is not uniquely defined; for example, $K\#-K$ is an equator of the unknotted $2$-sphere for any $1$-knot $K$, as $K\#-K$ is naturally an equator of the $1$-twist spin of $K$, which is unknotted~\cite{zeeman}. 

If $G=D_1\cup\overline{D_2}$ where $D_1,D_2$ have $m_1,m_2$ minima respectively, then in $S^4$ the $2$-knot $G$ can be taken to have $m_1$ minima, $m_2$ maxima, and $m_1+m_2-2$ saddle points with respect to the standard height function. (Of course, ambient isotopy of $S^4$ may change the number of critical points induced by the standard height function on $G$.)

\begin{corollary}\label{2knotcor}
Let $G$ be a $2$-knot embedded in $S^4$ with exactly two minima, two maxima, and two saddle points with respect to the standard height function. If the equator of $G$ (with respect to this decomposition) is a fibered $1$-knot of genus-$g$, then $G$ is fibered and the closure of the $3$-dimensional fiber for $G$ admits a genus-$g$ Heegaard splitting.
\end{corollary}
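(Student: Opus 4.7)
The plan is to apply Corollary~\ref{diskcor} separately to each of the two ribbon disks forming $G$, glue the resulting fibrations along the equator complement to obtain a fibration of $S^4\setminus\nu(G)$, and finally verify that the closed fiber inherits a genus-$g$ Heegaard splitting.

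Write $G=D_1\cup\overline{D_2}$ with $D_i\subset B^4$ a ribbon disk with boundary the common equator $K$. Since $G$ has two minima, two maxima, and two saddles, the relations $m_1+m_2=4$ and $m_1+m_2-2=2$ force $m_1=m_2=2$, so each $D_i$ is a ribbon disk with exactly two minima. By hypothesis $K$ is a fibered knot of genus $g$, so Corollary~\ref{diskcor} applies to each $D_i$ to give a fibration of $B^4\setminus\nu(D_i)$ by genus-$g$ handlebodies $H_i$ extending the given fibration of $S^3\setminus\nu(K)$ by the fiber surface $\mathring\Sigma_g$.

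Both extensions restrict to the same fibration on the common boundary piece $S^3\setminus\nu(K)$, so they assemble into a fibration of
\[S^4\setminus\nu(G)=(B^4\setminus\nu(D_1))\cup_{S^3\setminus\nu(K)}(\overline{B^4}\setminus\nu(D_2))\]
over $S^1$, with fiber $F=H_1\cup_{\mathring\Sigma_g}H_2$. The boundary $\partial F$ is the $2$-sphere formed by two meridian disks of $\nu(D_1)$ and $\nu(D_2)$ glued along $\partial\mathring\Sigma_g$, which is precisely the $S^2$-fiber of $\partial\nu(G)\cong S^2\times S^1$. Hence $G$ is fibered with closed fiber $\hat F=F\cup_{S^2}B^3$. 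For the Heegaard splitting, take $\Sigma:=\partial H_1$, a closed genus-$g$ surface in $\hat F$ obtained by capping $\mathring\Sigma_g$ with a meridian disk of $\nu(D_1)$. On one side it bounds the handlebody $H_1$; on the other side sits $H_2\cup B^3$, where the $B^3$ is attached to $H_2$ along a single meridian disk of $\nu(D_2)$ in $\partial H_2$. Attaching a $3$-ball along a disk in the boundary preserves diffeomorphism type, so this side is again a genus-$g$ handlebody, yielding the desired splitting.

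The substantial analytic work is already handled by Corollary~\ref{diskcor} (and hence Theorem~\ref{maintheorem}); the main obstacle here is the careful bookkeeping on $\partial\nu(G)$ — checking that the fibrations from the two sides match smoothly across $S^3\setminus\nu(K)$, and identifying the $S^2$ bounding the capping $B^3$ correctly as a fiber of $\nu(G)\cong G\times D^2$ at a point of $G$, so that the handlebody identification in the last step is legitimate.
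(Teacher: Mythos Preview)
Your argument is correct and follows exactly the paper's approach: apply Corollary~\ref{diskcor} to each of the two $2$-minimum ribbon disks $D_1,D_2$ and glue the resulting handlebody fibrations along $S^3\setminus\nu(K)$; the paper simply writes ``the claim follows'' where you spell out the gluing and the Heegaard splitting. One small slip: the two relations you wrote, $m_1+m_2=4$ and $m_1+m_2-2=2$, are equivalent and do not by themselves force $m_1=m_2=2$ --- but the hypothesis ``two minima, two maxima'' gives $m_1=2$ and $m_2=2$ directly, so the conclusion stands.
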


\begin{proof}
We have $G=D_1\cup\overline{D_2}$, where each $D_i$ has two minima and $K:=\boundary D_i$ is fibered. By Corollary~\ref{diskcor}, the fibration on $S^3\setminus\nu(K)$ extends to fibrations on $B^4\setminus\nu(D_1)$ and $B^4\setminus\nu(D_2)$ by genus-$g$ handlebodies. The claim follows.
\end{proof}

Corollary~\ref{diskcor} 
can be viewed as an extension of the following fact due to Scharlemann~\cite{scharlemann}: If $D\subset B^4$ is a ribbon disk with two minima so that $\boundary D$ is the unknot, then $D$ is boundary parallel 
(and hence $B^4\setminus\nu(D)\cong S^1\times B^3$ is fibered by $3$-balls). Note that to prove Corollary~\ref{diskcor}, we used the fact that the saddle band lies in a minimum-genus surface of the equator. This fact also proves Scharlemann's theorem, so we have extended this result but do not claim a simpler proof.

Finally, we remark that there is no analogue of Theorem~\ref{maintheorem} for links. In fact, for $r>1$, a fibered $r$-component link cannot be slice (in the sense of bounding $r$ disjoint slice disks in $B^4$). This follows from the fact that for $r>1$, a slice $r$-component link $L$ has Alexander polynomial\footnote{We are referring to Fox's definition of the Alexander polynomial of a link, as in~\cite{foxtrip}.} $\Delta(t_1,\ldots, t_r)=0$~\cite{foxtrip}. Meanwhile, if $L$ is fibered, then $\Delta(t,\ldots, t)$ must be monic, as $t^k (t-1)\Delta(t,\ldots, t)$ is the characteristic polynomial of the monodromy of $L$ for some $k\in\Z$~\cite[Lemma 10.1]{milnor}.

More fundamentally, this result cannot be extended to links because for $r>1$, $\chi(B^4\setminus\nu(\sqcup_r D^2))<0$, so the complement of multiple ribbon disks in $B^4$ cannot be fibered.\footnote{More generally, a multi-component link of $S^{k}$s embedded in $S^{k+2}$ is {\emph{never}} fibered for $k>1$~\cite{notfibered}. For $k$ even, this follows easily from the Euler characteristic argument.} Thus, Theorem~\ref{maintheorem} passes a basic sanity check for links.

\subsection*{Outline}
\begin{itemize}
\item
Section~\ref{sec:fibrations}: We give basic definitions and prove Theorem~\ref{fibrationthm}.
\item Section~\ref{sec:blocks}: We construct a library of basic movies of singular fibrations used to construct fibrations on larger $4$-manifolds.
\item Section~\ref{construction}: We prove Theorems~\ref{maintheorem} and~\ref{secondtheorem}. We mainly focus on Theorem~\ref{maintheorem}, and then quickly prove Theorem~\ref{secondtheorem} using the same argument.
\item Section~\ref{sec:conclusion}: We discuss how to find the $2$-handle attaching circles on a fiber in $S^3\setminus\nu(K)$ which define the handlebody fiber in $B^4\setminus\nu(D)$ built during the construction of Theorem~\ref{maintheorem}. 
\item Section~\ref{sec:examples}: We provide examples of fibered ribbon disks. In particular, we produce a fibered ribbon disk for each prime fibered slice knot of fewer than 13 crossings.
\item Section~\ref{sec:questions}: We discuss relevant open questions.
\end{itemize}

\subsection*{Acknowledgements}
 The author thanks her graduate advisor, David Gabai, for many long, helpful conversations. Thanks to David Gay for interesting points about singularities that helped to explain movies of singular fibrations. Thanks also to Clayton McDonald, Jeff Meier, Ian Zemke and Alex Zupan for insight on ribbon disks and Cole Hugelmeyer for useful expositional comments. Finally, thanks to an anonymous referee (who clearly read this paper very carefully) for providing many helpful comments.

During the time of this project, the author was a fellow in the National Science Foundation Graduate Research Fellowship program, under Grant No. DGE-1656466, at Princeton University. This paper was the basis of her doctoral dissertation submitted to Princeton in April, 2020.

\section{Movies of singular fibrations on $4$-manifolds with boundary\label{sec:fibrations}}

\subsection{Basic definitions}

In this subsection, we essentially give alternate definitions of circular Morse functions and their critical points specifically in dimension three. These descriptions will be useful in explicit constructions.

\begin{definition}
A {\emph{singular fibration}} $\F$ on a compact $3$-manifold $M^3$ is a smooth map $\F:M^3\to S^1$ with the following properties:
\begin{itemize}
\item Each $\F^{-1}(\theta)$ is a compact, properly embedded surface away from a finite number of {\emph{cone}} or {\emph{dot}} singularities in its interior and {\emph{half-cone}}, {\emph{half-dot}}, and {\emph{bowl}} singularities in its boundary (see below).
\item For all but finitely many $\theta$, $\F^{-1}(\theta)$ is a nonsingular surface.
\item There are finitely many singularities in the foliation $\F$ induces on $\boundary M^3$. If $\F^{-1}(\theta)\cap\boundary M$ contains such a singular point, then near that point $\F^{-1}(\theta)$ is a half-cone, a half-dot, or a bowl. See Figure~\ref{fig:singularities}.
 \end{itemize}

See Figure~\ref{fig:singularities} for an illustration of cone and dot singularities, as well as singularities in the induced foliation on  $\boundary M^3$. We believe this figure and the names of the singularities to be sufficient to understand what they are, but we include the following more precise descriptions for completeness. (These descriptions can be safely ignored if the reader is comfortable with Figure~\ref{fig:singularities}).

\begin{figure}\begin{centering}
\includegraphics[width=.8\textwidth]{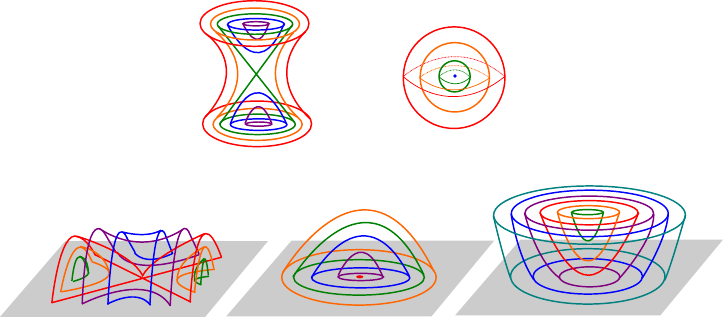}
\caption[Neighborhoods of singularities of singular fibrations.]{Top: neighborhoods of a cone and dot singularity in $\mathring M^3$. Bottom: Leaves of $\F$ near a half-cone, half--dot, and bowl singular point in the foliation induced on $\boundary M^3$.}
\label{fig:singularities}
\end{centering}\end{figure}

For $p\in \mathring{M}^3$, We say that $p\in \F^{-1}(\theta)$ is a {\emph{cone}} singularity if for some small $3$-ball neighborhood of $p$ in $M^3$, with coordinates $x,y,z$, we have $\F^{-1}(\theta)=\{x^2+y^2=z^2\}$, and for $\theta'\neq\theta$, $\F^{-1}(\theta')=\{x^2+y^2=z^2+c_{\theta'}\}$ for some constant $c_{\theta'}\neq 0$. In words, near $p$ the set $\F^{-1}(\theta)$ looks like a double cone, while each nearby $\F^{-1}(\theta')$ looks like a hyperboloid of one or two sheets.

For $p\in\mathring{M}^3$, we say $p\in\F^{-1}(\theta)$ is a {\emph{dot}} singularity if for some small $3$-ball neighborhood of $p$ in $M^3$, with coordinates $x,y,z$, we have $\F^{-1}(\theta)=\{(0,0,0)\}$ and for $\theta'\neq\theta$, $\F^{-1}(\theta')=\{x^2+y^2+z^2=c_{\theta'}\}$ for some constant $c_{\theta'}\neq 0$. In words, near $p$ the set $\F^{-1}(\theta)$ looks like an isolated point, while nearby $\F^{-1}(\theta')$ are nested spherical shells (or the empty set).
 
 For $p\in\boundary M^3$, we say $p\in\F^{-1}(\theta)$ is a {\emph{half-cone}} singularity if for some small neighborhood of $p$ in $M^3$, with coordinates $x,y,z$, we have $\F^{-1}(\theta)=\{x^2+z^2=y^2, z\ge0\}$, and for $\theta'\neq\theta$, $\F^{-1}(\theta')=\{x^2+z^2=y^2+c_{\theta'},z\ge 0\}$ for some constant $c_{\theta'}\neq 0$. In words, near $p$ the set $\F^{-1}(\theta)$ looks like half of a cone (half of each side), while each nearby $\F^{-1}(\theta')$ looks like half of a hyperboloid of one or two sheets.

For $p\in\boundary M^3$, we say $p\in\F^{-1}(\theta)$ is a {\emph{half-dot}} singularity if for some small neighborhood of $p$ in $M^3$, with coordinates $x,y,z$, we have $\F^{-1}(\theta)=\{(0,0,0)\}$ and for $\theta'\neq\theta$, $\F^{-1}(\theta')=\{x^2+y^2+z^2=c_{\theta'}, z\ge 0\}$ for some constant $c_{\theta'}\neq 0$. In words, near $p$ the set $\F^{-1}(\theta)$ looks like an isolated point, while nearby $\F^{-1}(\theta')$ are nested half-spherical shells (or the empty set).

 For $p\in\boundary M^3$, we say $p\in\F^{-1}(\theta)$ is a {\emph{bowl}} singularity if for some small neighborhood of $p$ in $M^3$, with coordinates $x,y,z$, we have $\F^{-1}(\theta)=\{x^2+y^2=z\}$ and for $\theta'\neq\theta$, $\F^{-1}(\theta')=\{x^2+y^2=z+c_{\theta'}, z\ge 0\}$ for some constant $c_{\theta'}\neq 0$. In words, near $p$ the set $\F^{-1}(\theta)$ looks like a bowl (i.e. a circular paraboloid), while nearby $\F^{-1}(\theta')$ are nested bowls or cylinders.

We may think of $\F$ as being a circle-valued Morse function on $M^3$, so that the cone and dot singularities correspond to critical points in the Morse function $\F$. A cone corresponds to a critical point of index $1$ or $2$, while a dot to a critical point of index $0$ or $3$ (depending on the orientation of each $\F^{-1}(\theta)$ induced by the positive orientation of $S^1$).
 
 We will refer to each $\F^{-1}(\theta)$ as a {\emph{leaf}} of $\F$.

 \end{definition}

 We may extend the definition of singular fibrations to some singular $3$-manifolds. Specifically, we consider those singular 3-manifolds which arise as level sets of Morse functions on closed 4-manifolds.
 
 \begin{definition}\label{singdef}
 Let $M^3$ be a closed, compact, singular $3$-manifold with finitely many singularities $S=\{p_1,\ldots, p_n,q_1,\ldots, q_m\mid p_i\in\boundary M^3, q_j\in\mathring{M^3}\}$. For $p_i\in S\cap\boundary M^3$, we require $p_i$ to have a neighborhood homeomorphic to one of: $\{x^2+y^2\ge z^2\}$, $\{x^2+y^2\le z^2\}$, $\{x^2+y^2+z^2\ge0\mid (0,0,0)$ is artificially declared to be a boundary point$\}$ or $\{(0,0,0)\}$. 
 
 For $q_i\in S\cap\mathring{M^3}$, we require $q_i$ to have a neighborhood of the form $P\cup_{\nu(q_i)\cap\boundary(P)}\overline{P}$, where $P$ is a singular manifold with boundary and $q_i$ is a singularity as above. (That is, we first understand a neighborhood $P$ of a singularity in $\boundary M^3$. Then an interior singularity has a neighborhood which is a double of $P$.)
 
 This means $\boundary M^3$ is the disjoint union of surfaces with a finite number of cone singularities and isolated points $\{p_1,\ldots, p_n\}$.
 
Let $\G$ be a map $\G:(M^3\setminus S)\to S^1$ which is a singular fibration on $M^3$ away from singularities of $M^3$. 

We extend $\G$ to $\F:M^3\to S^1$ by saying $\F(x)=\theta$ if there is a smooth path $\gamma:[0,1]\to M^3$ with $\gamma(1)=x$ and $\lim_{t\to 1}\G(\gamma(t))=\theta$. If $x$ is isolated in $M^3$, we choose $\F(x)$ arbitarily. We require that $\G$ be defined so that near a singular point $p_i$ in $\boundary M^3$, $\F$ is as in one of the diagrams in Figure~\ref{fig:singularmfd}. Near a singular point $q_i\in\mathring{M^3}$, $\F$ must be as in two mirror copies of one diagram of Figure~\ref{fig:singularmfd} glued along the boundary near $q_i$. In particular, we require $\F$ be a well-defined function. 

We call $\F$ a singular fibration of $M^3$.
\end{definition}

\begin{figure}\begin{centering}
\includegraphics[width=.6\textwidth]{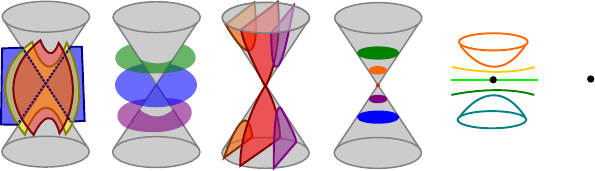}
\caption[A singular fibration $\F$ on a singular $3$-manifold $M^3$.]{A singular fibration $\F$ on a singular $3$-manifold $M^3$, as in Def.~\ref{singdef}. We require $\F$ agree with one of these six configurations in a neighborhood of a singular point in $\boundary M^3$. Near a singular point in $\mathring{M^3}$, $\F$ should agree with two copies of one of these diagrams, glued together along boundary near the singularity. {\bf{We first describe each figure as if the singularity is in $\boundary M^3$.}}
First four: the gray cone is contained in $\boundary M^3$. At the cone point, a leaf of $\F$ is either a wedge of two disks along a boundary point, a disk centered at the cone, a wedge of two disks along a boundary point, or a point (respectively). A nearby leaf is a disk, annulus, two disjoint disks, or disk (respectively). 
Last two: the black dot is an isolated point of $\boundary M^3$. We draw leaves of $\F$ near this point, in the case that the point is not isolated in $M^3$, and then finally the case that the point is isolated in $M^3$.
{\bf{We now describe each figure as if the singularity is in $\mathring{M^3}$.}}
First four: at the singularity, a leaf of $\F$ is either a cone, a cone, a cone, or a point (respectively). A nearby leaf is a cylinder, cylinder, two disjoint disks, or sphere (respectively).
Last two: the black dot is an isolated point of $\mathring{M^3}$. The leaf of $\F$ containing the singularity is either a cone or a point, respectively.}
\label{fig:singularmfd}
\end{centering}\end{figure}

\begin{definition}
A {\emph{movie of singular fibrations}} on $X^4$ with Morse function $h:X^4\to I$ is a family of smooth maps $\F_t:h^{-1}(t)\to S^1$  so that each $\F_t$ is a singular fibration on $h^{-1}(t)$ and the $\F_t$ vary smoothly with $t$ (i.e.\ $x\mapsto \F_{h(x)}(x)$ is a smooth map from $X^4$ to $S^1$).
\end{definition}

We give several basic building block movies of singular fibrations in section~\ref{sec:blocks}. First, we discuss how movies of singular fibrations may describe smooth fibrations of a $4$-manifold.

\subsection{Singularity charts: how to fiber a $4$-manifold}

Suppose we have a movie $\F_t$ of singular fibrations on $Z^4$ and 
we hope to find a fibration $\{H_\theta\}_{\theta\in S^1}$ of $Z^4$.

First, we consider two natural (and failing) attempts to create a fibration on $Z^4$.

\begin{itemize}
\item The ``vertical fiber'' strategy: Let $H_\theta=\cup_t \F_t^{-1}(\theta)$. This fails because $\F_t^{-1}(\theta)$ may be singular for an interval of $t$ values. Then the given union will not generally be a 3-manifold.
\item The ``tilted fiber'' strategy: Let $H_\theta=\cup_t \F_t^{-1}(\theta+c t)$ for a constant $c$. For some choice of $c$, singular cross-sections of $H_\theta$ are isolated. 
However this strategy fails (in part) because we may change the direction in which the $3$-dimensional fibers flow through a singular leaf in $\F_t$, leading to some $3$-dimensional fiber being singular.
\end{itemize}

It is very important that the reader unstand the failing of the tilted fiber strategy. Attempt to take $H_\theta:\cup_t \F_t^{-1}(\theta+\epsilon t)$. 
Suppose $p$ is a cone point of $\F_t$ for $t\in[s',s'']$. For some $\theta',\theta''$, suppose that $H_{\theta'}$ and $H_{\theta''}$ include $p$ at heights $s'$ and $s''$ respectively. Suppose moreover that near $p$, $H_{\theta'}\cap h^{-1}(s'-\epsilon)$ and $H_{\theta''}\cap h^{-1}(s''+\epsilon)$ are one-sheeted hyperboloids.

By the intermediate value theorem, there must be some $H_\theta$ so that $H_\theta\cap h^{-1}(s)$ includes $p$, and for small epsilon $H_\theta\cap h^{-1}(t_0-\epsilon)$ and $H_\theta\cap h^{-1}(t_0+\epsilon)$ are both one-sheeted hyperboloids near $p$. Then $H_\theta$ is a singular $3$-manifold. We illustrate this situation in Figure~\ref{fig:singularfiber}. (We hope that what just took a lot of words to say becomes obvious in Figure~\ref{fig:singularfiber}.) We see now that it is {\emph{essential}} that the $3$-dimensional fibers of $Z^4$ ``resolve'' the same way at cone $p$.

\begin{figure}\begin{centering}
\begingroup%
  \makeatletter%
  \providecommand\color[2][]{%
    \errmessage{(Inkscape) Color is used for the text in Inkscape, but the package 'color.sty' is not loaded}%
    \renewcommand\color[2][]{}%
  }%
  \providecommand\transparent[1]{%
    \errmessage{(Inkscape) Transparency is used (non-zero) for the text in Inkscape, but the package 'transparent.sty' is not loaded}%
    \renewcommand\transparent[1]{}%
  }%
  \providecommand\rotatebox[2]{#2}%
  \newcommand*\fsize{\dimexpr\f@size pt\relax}%
  \newcommand*\lineheight[1]{\fontsize{\fsize}{#1\fsize}\selectfont}%
  \ifx\svgwidth\undefined%
    \setlength{\unitlength}{295.12601357bp}%
    \ifx\svgscale\undefined%
      \relax%
    \else%
      \setlength{\unitlength}{\unitlength * \real{\svgscale}}%
    \fi%
  \else%
    \setlength{\unitlength}{\svgwidth}%
  \fi%
  \global\let\svgwidth\undefined%
  \global\let\svgscale\undefined%
  \makeatother%
  \begin{picture}(1,0.60442814)%
    \lineheight{1}%
    \setlength\tabcolsep{0pt}%
    \put(0,0){\includegraphics[width=\unitlength,page=1]{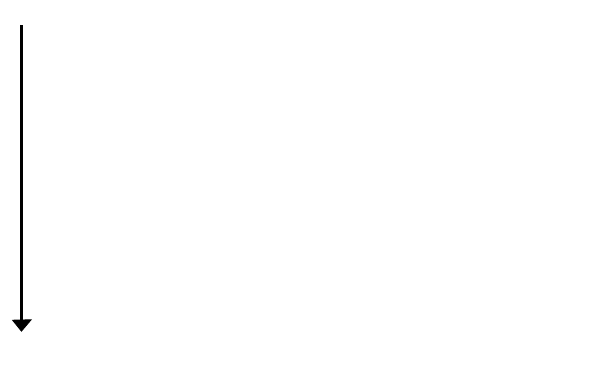}}%
    \put(0.0203035,0.23971736){\color[rgb]{0,0,0}\rotatebox{90}{\makebox(0,0)[lt]{\lineheight{0}\smash{\begin{tabular}[t]{l}decreasing $t$\end{tabular}}}}}%
    \put(0,0){\includegraphics[width=\unitlength,page=2]{Fig4.pdf}}%
  \end{picture}%
\endgroup%

\caption[Smooth vs. singular fibers.]{Left and right: cross-sections of a smooth $3$-manifold. Each has one singular cross-section (containing a cone). The nearby cross-sections are {\emph{different}} resolutions of that cone. Middle: cross-sections of a {\emph{singular}} $3$-manifold. Even though there is only an isolated singular cross-section (a single cone), the resolutions of the cone are {\emph{the same}} above and below the singular cross-section.}
\label{fig:singularfiber}
\end{centering}\end{figure}

We make this more precise:

\begin{definition}\label{typecone}
 Let $\F_t$ be a movie of singular fibrations on a $4$-manifold $Z^4$ with Morse function $h:Z^4\to I$. Let $t_0$ be a regular value of $h$. Let $x_{t_0}\in h^{-1}(t_0)$ be a critical point of $\F_{t_0}$. Let $B\subset Z^4$ be a small neighborhood of $x_{t_0}$. Take $B$ small so that for some small $\epsilon>0$, there is a unique critical point $x_t$ of $\F_t$ in the interior of $B$ for $t\in(t_0-\epsilon, t_0+\epsilon)$. 
 
 Let \[T_{\F,t_0}(x)=\frac{d}{ds}\bigg|_{s={t_0}}\F_s(x_s).\]

Let $p\in\F_{t_0}^{-1}(\theta_p)$ be a cone or half-cone singularity.
Let $q\in\F_{t_0}^{-1}(\theta_q)$ be a dot or half-dot singularity. Let $b\in\boundary\F_{t_0}^{-1}(\theta_b)$ be a bowl point. Assume $T_{\F,t_0}(\theta_p)$, $T_{\F,t_0}(\theta_q)$, $T_{\F,t_0}(\theta_b)\neq 0$.

 We define the type of $p$ or $q$ or $b$ with respect to $\F_t$ in Table~\ref{typetable}.
 
\begin{table}\begin{centering}
\begin{tabular}{c|cc}
&\multicolumn{2}{c}{index of $p$}\\
 &$1$&$2$\\
 \hline
 $T_{\F,t_0}(p)>0$&type I&type II\\
 $T_{\F,t_0}(p)<0$&type II&type I\\
 \hline\multicolumn{3}{c}{}\\
 &\multicolumn{2}{c}{index of $q$}\\
 &$0$&$3$\\
 \hline\\
 $T_{\F,t_0}(q)>0$&type \0&type III\\
 $T_{\F,t_0}(q)<0$&type III&type \0\\
 \hline\multicolumn{3}{c}{}\\
 &\multicolumn{2}{c}{index of $b$ in $\F_t$ restricted to $\boundary h^{-1}(t)$}\\
 &$0$&$2$\\
 \hline
 $T_{\F,t_0}(b)>0$&type \bone&type \btwo\\
 $T_{\F,t_0}(b)<0$&type \btwo&type \bone
 \end{tabular}\caption[The type of a singularity in a movie of singular fibrations.]{We define the type of a cone or half-cone $p$, a dot or half-dot $q$, or a bowl $b$ in a movie of singular fibrations.}\label{typetable}
 \end{centering}\end{table}

See Figure~\ref{fig:types}. 
In words, if as $t$ decreases the leaves of $\F_t$ near a cone $p$ change from hyperboloids of two sheets to hyperboloids of one sheet (the leaves ``fuse together''), we say $p$ is type I. If the leaves change from hyperboloids of one sheet to hyperboloids of two sheets (the leaves ``split apart''), we say $p$ is type II. Similarly for half-cones. 

If as $t$ decreases the leaves of $\F_t$  near a dot $q$ shrink, then we say $q$ is of type III. 
If the spheres expand, we say $q$ is of type \0. Similarly for half-dots.

If as $t$ decreases the leaves of $\F_t$ near a bowl 
point $b$ move away from $\boundary h^{-1}(t)$ near $b$, then 
we say $b$ is type \btwo\hspace{-3.3pt}. 
If as $t$ decreases, the leaves of $\F_t$ move toward $\boundary h^{-1}(t)$ near $b$, then we say $b$ is type \bone\hspace{-3.3pt}.

\begin{figure}\begin{centering}
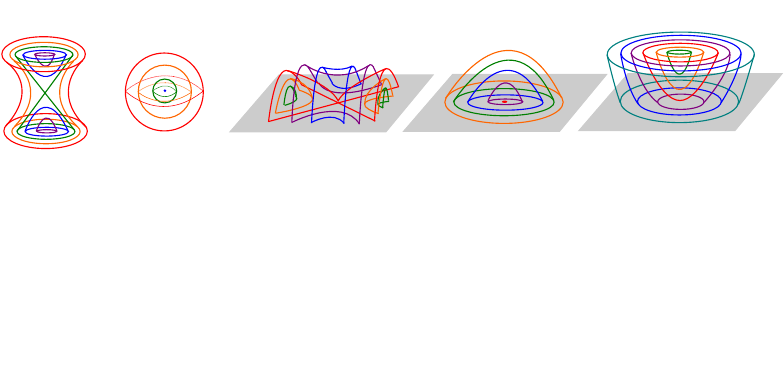
\caption[The type of a singularity in a movie of singular fibrations.]{The type of a singularity $p$ of $\F_t$. 
The arrow indicates the direction in which a singular cross-section of $H_\theta$ is resolved as $t$ decreases.}
\label{fig:types}
\end{centering}\end{figure}

  \end{definition}

Thus, for $H_\theta=\cup_t\F_t^{-1}(\theta)$ to be nonsingular, it must be the case that a singularity of $\F_t$ does not change type as $t$ varies (away from critical points of $h$). With this motivation, in this paper we will use the vertical fiber strategy while carefully ensuring that the types of singularities never change with $t$.

\begin{remark}
The name of a type of an interior cone or dot refers to a relative handle in $H_\theta=\cup_t\F_t^{-1}(\theta)$ (in a handle decomposition relative to $\F_1^{-1}(\theta)$). A cone of type I or II contributes a $1$- or $2$-handle, respectively. A dot of type \0 or III contributes a $0$- or $3$-handle, respectively.

A half-cone or half-dot contributes a handle with half the usual attaching region. That is, a copy of $B^k\times B^{3-k}$ attached along $($one hemisphere of $S^{k-1})\times B^{3-k}$ (where $S^0$ is its own hemisphere). Therefore, a half-cone or half-dot of type I or type \0 contributes a $1$- or $0$-handle respectively. A half-cone or half-dot of type II or III do not contribute handles to $H_\theta$.

A bowl point of type \btwo contributes a $2$-handle to $H_\theta$. A bowl point of type \bone does not contribute a handle to $H_\theta$.
\end{remark}

\begin{definition}\label{def:chart}
Let $\F_t$ be a movie of singular fibrations. We will keep track of the types and indices of singularities as $t$ decreases via a diagram we call a {\emph{singularity chart}} of $\F_t$. The vertical axis is $t$ (with $t=1$ at the top and $t=0$ at the bottom). The horizontal axis is $S^1=\F_t(h^{-1}(t))$.
We plot the singular values of $\F_t$, so that as $t$ varies these points trace out arcs with some endpoints where a singularity is born or dies. We label each arc with the type of the corresponding singularity. Singularities in $\boundary h^{-1}(t)$ are drawn with dashed lines. (Note this is essentially a Cerf chart.)
\end{definition}

\begin{definition}\label{def:valid}
A singularity chart is said to be {\emph{valid}} if it satisfies the following conditions.
\begin{itemize}
\item All arcs are smooth in their interiors and are never vertical.
\item Arcs intersect transversely in their interiors.
\item At each endpoint of an arc, there are two or three arcs which share that endpoint. Near the endpoint, the chart looks like one of the subcharts in Figure~\ref{fig:chartjoins}.
\end{itemize}

If a singularity chart is not valid, we say it is {\emph{invalid}}. See Figure~\ref{fig:examplechart} for an example of a valid singularity chart.
\end{definition}

\begin{figure}\begin{centering}
\scalebox{0.8}{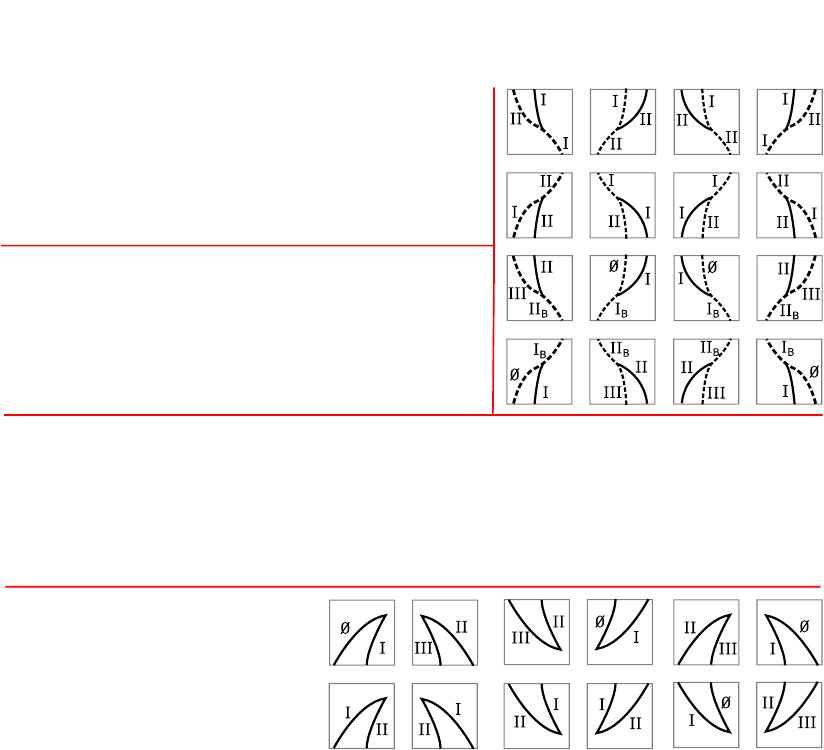}
\caption[Possible configurations of two or three arcs meeting at an endpoint in a valid singularity chart.]{Possible configurations of two or three arcs meeting at an endpoint in a valid singularity chart.}
\label{fig:chartjoins}
\end{centering}\end{figure}

\begin{figure}\begin{centering}
\scalebox{1.2}{
\begingroup%
  \makeatletter%
  \providecommand\color[2][]{%
    \errmessage{(Inkscape) Color is used for the text in Inkscape, but the package 'color.sty' is not loaded}%
    \renewcommand\color[2][]{}%
  }%
  \providecommand\transparent[1]{%
    \errmessage{(Inkscape) Transparency is used (non-zero) for the text in Inkscape, but the package 'transparent.sty' is not loaded}%
    \renewcommand\transparent[1]{}%
  }%
  \providecommand\rotatebox[2]{#2}%
  \newcommand*\fsize{\dimexpr\f@size pt\relax}%
  \newcommand*\lineheight[1]{\fontsize{\fsize}{#1\fsize}\selectfont}%
  \ifx\svgwidth\undefined%
    \setlength{\unitlength}{125.80002071bp}%
    \ifx\svgscale\undefined%
      \relax%
    \else%
      \setlength{\unitlength}{\unitlength * \real{\svgscale}}%
    \fi%
  \else%
    \setlength{\unitlength}{\svgwidth}%
  \fi%
  \global\let\svgwidth\undefined%
  \global\let\svgscale\undefined%
  \makeatother%
  \begin{picture}(1,1.88258461)%
    \lineheight{1}%
    \setlength\tabcolsep{0pt}%
    \put(0,0){\includegraphics[width=\unitlength,page=1]{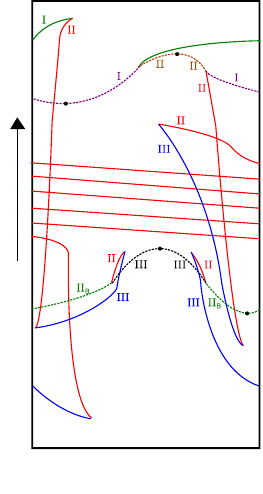}}%
    \put(-0.00902832,1.12724921){\color[rgb]{0,0,0}\makebox(0,0)[lt]{\lineheight{0}\smash{\begin{tabular}[t]{l}$t$\end{tabular}}}}%
    \put(0,0){\includegraphics[width=\unitlength,page=2]{Fig7.pdf}}%
    \put(0.54576445,0.01598554){\color[rgb]{0,0,0}\makebox(0,0)[lt]{\lineheight{0}\smash{\begin{tabular}[t]{l}$\theta$\end{tabular}}}}%
  \end{picture}%
\endgroup%
}
\caption[An example of a valid singularity chart.]{An example of a valid singularity chart. Note that no arc is ever vertical. Each endpoint locally looks like one of the images in Figure~\ref{fig:chartjoins}. Colors/shades of the arcs are only meant to help the reader view the image; they contain no mathematical meaning.}
\label{fig:examplechart}
\end{centering}\end{figure}

Note that the index of a singularity and the slope of its arc in a singularity chart determine its type. If on the arc, $\theta$ decreases as $t$ decreases (i.e. the arc has positive slope), then the index and type agree (e.g. an index-$2$ cone is type II). If $\theta$ increases as $t$ decreases (i.e. the arc has negative slope), then the index and type do not agree (e.g. an index-$2$ cone is type I). Therefore, labelling each arc of a valid singularity chart with only the type of the singularity it corresponds to also determines the indices of each singularity.

\begin{remark}
In a valid singularity chart, singularity arcs which cross in their interior (i.e. correspond to singular points whose heights interchange) may have the same or different types. Crossing data does not affect the validity of the chart.
\end{remark}

\begin{definition}
We say that a movie of singular fibrations $\F_t$ is {\emph{valid}} if the singularity chart for $\F_t$ is valid after potentially perturbing $\F_t$ away from critical points of $h$.

That is, $\F_t$ is valid if it is valid near critical points of $h$, and each arc in the singularity chart for $\F_t$ attains one type (sign of slope), but may sometimes have undefined type (vertical slope). Singularity arcs may be tangent or coincide in some interval, but a small perturbation of $\F_t$ should make them intersect transversely.

In the singularity chart for a valid movie, we label each arc with the unique type it ever attains.
\end{definition}

From now on, when $\F_t$ is a valid movie, we will implicitly perturb $\F_t$ to have valid singularity chart.

Theorem~\ref{fibrationthm} essentially follows from the definition of a valid singularity chart.

\begin{fibrationthm}
Let $\F_t$ be a movie of singular fibrations on $Z^4$ with a valid singularity chart so that $\F_0$ and $\F_1$ are nonsingular. 
Then $Z^4$ is fibered over $S^1$ by $\{H_\theta\}$, where $H_\theta=\cup_t \F_t^{-1}(\theta)$.
\end{fibrationthm}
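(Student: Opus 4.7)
The plan is to build a single global smooth map $\F:Z^4\to S^1$ by $\F(x)=\F_{h(x)}(x)$ and show that $\F$ is a submersion everywhere on $Z^4$. Smoothness of $\F$ is already built into the definition of a movie of singular fibrations. Once the submersion property is established, compactness of $Z^4$ and Ehresmann's fibration theorem immediately upgrade $\F$ to a smooth locally trivial fibration, and by construction the fiber over $\theta$ is exactly $H_\theta=\bigcup_t\F_t^{-1}(\theta)$. So the whole content of the theorem is the submersion check, and the discussion of ``vertical'' versus ``tilted'' fibers preceding the statement is really a heuristic account of when this check can succeed.

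For any point $x$ at which $\F_{h(x)}$ is a regular value-of-nothing (i.e.\ $\F_{h(x)}$ is submersive near $x$), $d\F$ restricted to $Th^{-1}(h(x))$ is already surjective onto $TS^1$, so $d\F:TZ^4\to TS^1$ is surjective. The only points where the submersion property can fail are those $x\in Z^4$ where $x$ is one of the allowed singularities of $\F_{h(x)}$ (cone, dot, half-cone, half-dot, or bowl). As $t$ varies these critical points sweep out a $1$-complex $\Sigma\subset Z^4$ whose image under $(h,\F)$ is, by definition, the singularity chart. So the task reduces to verifying $d\F\ne 0$ at every point of $\Sigma$.

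Near a generic point $x_0$ of an interior cone arc, smoothness of the movie together with the parameterized Morse lemma puts $\F$ into the local form
\[\F(x,y,z,t)\;=\;\pm(x^2+y^2-z^2)\;+\;\alpha\cdot(t-t_0)\;+\;O((t-t_0)^2),\]
with $\alpha=T_{\F,t_0}(x_0)$. Validity of the chart is precisely the statement that no singularity arc is vertical in $(\theta,t)$-coordinates, which is equivalent to $\alpha\ne 0$. Hence $d\F=(\pm2x,\pm2y,\mp2z,\alpha+\cdots)$ is nonzero at the origin and $\F$ is submersive there. Entirely analogous standard quadratic models handle dots ($\pm(x^2+y^2+z^2)+\alpha(t-t_0)$), half-cones, half-dots, and bowls, each in half-space coordinates; in every case the linear $\alpha(t-t_0)$ term is nonzero precisely when the corresponding arc in the chart is non-vertical, which gives a nowhere-vanishing $d\F$.

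The main obstacle, and the step requiring genuine case analysis, is at the finitely many endpoints of arcs where two or three arcs meet in one of the configurations of Figure \ref{fig:chartjoins}. At these points the critical locus of $\F_t$ is not a smooth arc but a branching or tangency, and the local model of $\F$ on a small $B^4$ is a two-parameter unfolding rather than a one-parameter Morse family. I would proceed by writing down, for each configuration in Figure \ref{fig:chartjoins}, an explicit polynomial normal form (creation/cancellation pairs, birth-death of an index-$(k,k{+}1)$ pair, tangency of a bowl with an interior arc, etc.) on a standard $B^4$ in which $h$ is the last coordinate and $\F$ is a given polynomial, and then verify directly in each of the finitely many cases that $d\F$ is nowhere zero on the model. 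The deeper point is that the configurations allowed by Figure \ref{fig:chartjoins} are exactly those admitting such a smooth submersion model --- which is ultimately the conceptual reason for the very definition of a valid chart. After this case check, $\F$ is a submersion on all of $Z^4$, the hypothesis that $\F_0$ and $\F_1$ are nonsingular ensures that the fibration behaves nicely on $\partial Z^4$, and Ehresmann's theorem yields the claimed fibration by $\{H_\theta\}$.
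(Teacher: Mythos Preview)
Your approach---showing the total map $\F(x)=\F_{h(x)}(x)$ is a submersion and then invoking Ehresmann---is the paper's argument in analytic dress: the paper checks directly that each $H_\theta$ is a smoothly embedded $3$-manifold and then observes that a transversely oriented codimension-$1$ foliation by compact leaves is a fibration. Your identification ``arc non-vertical $\Leftrightarrow \alpha=T_{\F,t_0}\ne 0 \Leftrightarrow d\F\ne 0$'' at interior points of arcs is exactly the paper's type-consistency observation.

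There is one genuine oversight in your endpoint analysis. You propose to model every configuration of Figure~\ref{fig:chartjoins} on ``a standard $B^4$ in which $h$ is the last coordinate.'' But among those configurations are the non-cusp two-arc endpoints---where two arcs of the \emph{same} type and opposite index meet at a local extremum rather than at a Cerf cusp---and the paper's proof identifies these as occurring precisely at critical points of $h$. There $h$ cannot be a coordinate; the honest local model has $h$ itself quadratic, with $\F$ defined on a neighborhood of a singular level of $h$ as in Definition~\ref{singdef}. The paper separates this case out and handles it via the explicit movies of Figure~\ref{fig:boundarychangeokay}. Your submersion strategy still goes through once you write down the correct models, but they are genuinely different from the one-parameter Morse/Cerf families you invoke for the other endpoints. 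A smaller point: since $Z^4$ has corners, Ehresmann requires $\F$ to restrict to a submersion on every boundary stratum; non-verticality of the boundary-singularity arcs handles the vertical boundary and nonsingularity of $\F_0,\F_1$ handles the horizontal boundary, but you should say so explicitly.
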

\begin{proof}
Because the types of an arc in a singularity chart $\mathcal{C}$ for $\F_t$ never change, each $H_\theta$ is nonsingular away from points in $Z^4$ corresponding to endpoints of arcs in $\mathcal{C}$.

Where $\mathcal{C}$ has a cusp (as in a standard Cerf diagram), the $H_\theta$ are smooth. Near the endpoint, the arcs are contributing geometrically cancelling handles to each $H_\theta$ (Or, in the case of a type II-,III- or \bone-,II- boundary pair, no handles at all.)

Where three arcs in $\mathcal{C}$ have a common endpoint (i.e. at a cusp of an interior and boundary singularity, with another boundary singularity on the other side), the $H_{\theta}$ are similarly smooth. Near the endpoint, the interior singularity arc contributes a $k$-handle, where $k=\{0,1,2,3\}$ if the singularity is type $\{$\0, I, II, III$\}$. The boundary singularity cancelled by the interior contributes a $D^j\times D^{3-j}$ attached along (hemisphere of $S^{j-1}$)$\times D^{3-j}$, where $j=\{0,1,2,3\}$ if the singularity is type $\{$\0, I or \bone, II or \btwo, III$\}$.

Suppose $k=1$ and $j=2$. The handle $D^1\times D^{2}$ and half-handle $D^2\times D^{1}$ glue along (hemisphere of $S^1$)$\times D^1$ to form a $D^3$ attached along (hemisphere of $S^0$)$\times D^2$, exactly the handle contribution of a type I half-cone. (Recall that $S^0$ is its own hemisphere.) Similarly for other cancelling $k$ and $j$. These points in the singularity chart should be thought of as a standard critical point birth/death, intersected with a half-ball whose boundary meets the critical point.

Finally, we consider a common endpoint of two arcs in $\mathcal{C}$ which is not a cusp. This endpoint is at a local maximum or minimum of arcs in $\mathcal{C}$, so can be thought of as a birth or death of two arcs. The two arcs are of the same type (opposite index). 
Recall that in the definition of a singular fibration on a singular $3$-manifold $M^3=h^{-1}(t)$, we specified six possible configurations of $\F^{-1}(t)$ near a singular point in $\boundary M^3$ or $\mathring{M^3}$. (This was Figure~\ref{fig:singularmfd}). Say $x\in h^{-1}(t_0)$ is a critical point of $h$. Then $\F^{-1}(t_0)$ and the type of $x$ determines the topology of $\F^{-1}(t_0\pm\epsilon)$ near $x$. This determines the topology of $H_\theta$ near $x$. In particular, since $\F$ is valid, the $H_\theta$ containing $x$ is nonsingular near $x$. See Figure~\ref{fig:boundarychangeokay}. (In Section~\ref{thm110}, we will see that this pair of births/deaths corresponds to a critical point of $h$, with index determined by the types of the singularities.)

Thus, $\{H_\theta\}$ is a fibration of $Z^4$ over $S^1$.

\begin{figure}\begin{centering}
\begingroup%
  \makeatletter%
  \providecommand\color[2][]{%
    \errmessage{(Inkscape) Color is used for the text in Inkscape, but the package 'color.sty' is not loaded}%
    \renewcommand\color[2][]{}%
  }%
  \providecommand\transparent[1]{%
    \errmessage{(Inkscape) Transparency is used (non-zero) for the text in Inkscape, but the package 'transparent.sty' is not loaded}%
    \renewcommand\transparent[1]{}%
  }%
  \providecommand\rotatebox[2]{#2}%
  \newcommand*\fsize{\dimexpr\f@size pt\relax}%
  \newcommand*\lineheight[1]{\fontsize{\fsize}{#1\fsize}\selectfont}%
  \ifx\svgwidth\undefined%
    \setlength{\unitlength}{309.9537935bp}%
    \ifx\svgscale\undefined%
      \relax%
    \else%
      \setlength{\unitlength}{\unitlength * \real{\svgscale}}%
    \fi%
  \else%
    \setlength{\unitlength}{\svgwidth}%
  \fi%
  \global\let\svgwidth\undefined%
  \global\let\svgscale\undefined%
  \makeatother%
  \begin{picture}(1,1.49214484)%
    \lineheight{1}%
    \setlength\tabcolsep{0pt}%
    \put(0,0){\includegraphics[width=\unitlength,page=1]{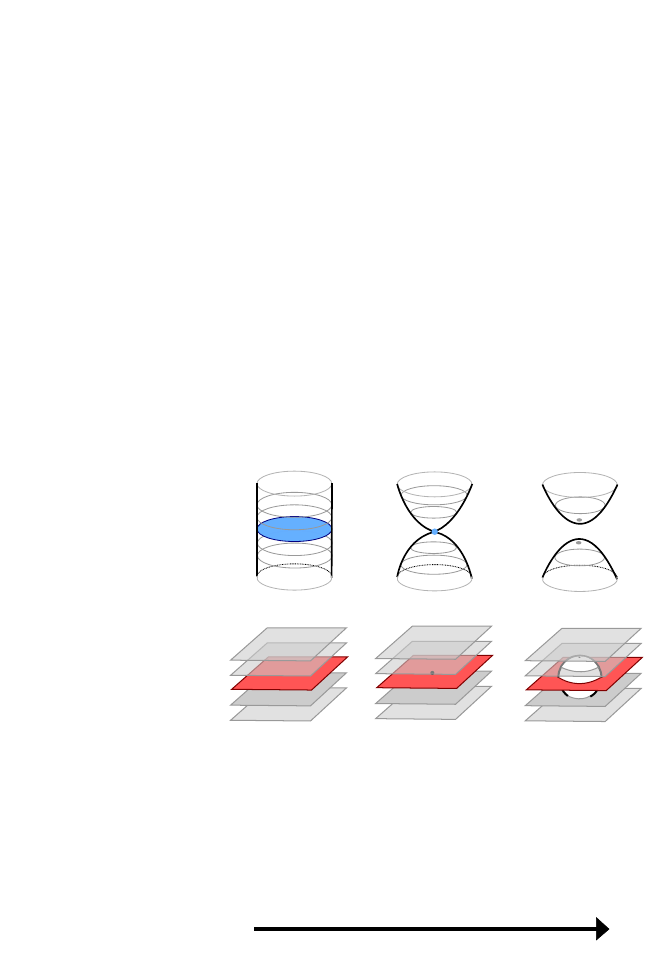}}%
    \put(0.55759459,0.00764723){\color[rgb]{0,0,0}\makebox(0,0)[lt]{\lineheight{0}\smash{\begin{tabular}[t]{l}decreasing $t$\end{tabular}}}}%
    \put(0,0){\includegraphics[width=\unitlength,page=2]{Fig8.pdf}}%
    \put(-0.00301272,1.46785552){\color[rgb]{0,0,0}\makebox(0,0)[lt]{\lineheight{0}\smash{\begin{tabular}[t]{l}interior\end{tabular}}}}%
    \put(0,0){\includegraphics[width=\unitlength,page=3]{Fig8.pdf}}%
    \put(0.16520559,1.46785552){\color[rgb]{0,0,0}\makebox(0,0)[lt]{\lineheight{0}\smash{\begin{tabular}[t]{l}boundary\end{tabular}}}}%
  \end{picture}%
\endgroup%

\caption[A movie of singular fibrations near a critical point of the height function.]{If $\F_t$ has a valid movie of singular fibrations, then near a critical point of $h$, $\F_t$ looks like one of these movies or their inverses ($t\mapsto 1-t$). We shade the $H_\theta$ that includes the critical point of $h$, and find that $H_\theta$ is smooth near the critical point. We have drawn each picture as if the singularity is in $\boundary M^3$; double the picture to obtain a movie containing an interior critical point of $h$. 
}\label{fig:boundarychangeokay}
\end{centering}\end{figure}
\end{proof}

\section{Building blocks: simple movies of fibrations\label{sec:blocks}}
In this section, we construct several explicit movies of singular fibrations. 

Every movie $\F_t:Z^4\to S^1$ (with height function $h:Z^4\to I$) from this section has the following property:  Up to isotopy $\F_t$ agrees with $\F_1$ on $(\boundary h^{-1}(1)) \cap(\pi\boundary h^{-1}(t))$,  under a natural projection $\pi$.

Let $\G$ be an arbitrary singular fibration on a $3$-manifold $M^3$. Suppose that $\G$ agrees with $\F_1$ on some copy $h^{-1}(1)\subset M^3$. Then we can extend $\G$ to a movie $\G_t$ on $[(M^3\setminus h^{-1}(1))\times I]\cup Z^4$ where $\G_t=\G$ outside $Z^4$ and $\G_t=\F_t$ inside $Z^4$.

We will say that we build $\G_t$ by ``playing'' Movie $\F_t$, where the identification $h^{-1}(1)\into M^3$ is clear. We might refer to $\G_t$ as agreeing with the ``top'' of $\F_t$ in $h^{-1}(1)\subset M^3$.

At the end of each subsection, we produce a valid singularity chart for each movie. 

To make the definition of these movies simpler, we will consider a special singular fibration on $B^3$.

\begin{definition}
A singular fibration $\F$ on $B^3=D^2\times I/(D^2\times 0\sim\pt, D^2\times 1\sim\pt)$ is {\emph{standard}} if each component of $\F^{-1}(\theta)$ (for all $\theta)$ is $D^2\times\pt$. A standard singular fibration of $B^3$ has no interior singularities, and has two half-dot singularities in $\boundary B^3$.
\end{definition}

\subsection{Adding or cancelling singular points}

We define movies corresponding to adding or cancelling a pair of singularities. 

\begin{move}[Interior stabilization and destabilization]\label{addconedot}
See Figure~\ref{fig:addcones}.

Let $Z^4=B^3\times I$, where $h:Z^4\to I$ is projection onto the second factor. Let $\F_1$ be a standard singular fibration of $B^3\times 1$. View $\F_1: B^3\to S^1$ as a circular Morse function. As $t$ decreases to $1/2$, obtain $\F_t$ from $\F_1$ by perturbing to add a cancelling-index pair. Then $\F_{1/2}$ agrees with $\F_1$ in a neighborhood of $\boundary B^3$, but $\F_{1/2}$ has exactly two interior singularities: one cone and one dot if the cancelling pair are index-$0$,-$1$ or -$2$,-$3$; two cones if the cancelling pair are index-$1$,-$2$. Extend $\F_t$ vertically to $t=0$. We call $\F_t$ an {\emph{interior stabilization movie}} (we generally include the indices of the birthed critical points but may drop the word ``interior'').

Let $\bar{h}:Z^4\to I$ be given by $\bar{h}(z)=1-h(z)$. Let $\G_t=\F_{1-t}$. With respect to $\bar{h}$, we call $\G_t$ an {\emph{interior destabilization movie}}.

We note that in the description of $\F_t$, it is not clear what are the types of the two singularities. In fact, we may parametrize $\F_t$ so that the types of the singularities either both agree or both disagree with their indices (i.e. an index-$0$,-$1$ pair may be types \0 and I or types III and II). See Figure~\ref{fig:stabtype}. 
\end{move}

\begin{figure}\begin{centering}
\begingroup%
  \makeatletter%
  \providecommand\color[2][]{%
    \errmessage{(Inkscape) Color is used for the text in Inkscape, but the package 'color.sty' is not loaded}%
    \renewcommand\color[2][]{}%
  }%
  \providecommand\transparent[1]{%
    \errmessage{(Inkscape) Transparency is used (non-zero) for the text in Inkscape, but the package 'transparent.sty' is not loaded}%
    \renewcommand\transparent[1]{}%
  }%
  \providecommand\rotatebox[2]{#2}%
  \newcommand*\fsize{\dimexpr\f@size pt\relax}%
  \newcommand*\lineheight[1]{\fontsize{\fsize}{#1\fsize}\selectfont}%
  \ifx\svgwidth\undefined%
    \setlength{\unitlength}{249.99337745bp}%
    \ifx\svgscale\undefined%
      \relax%
    \else%
      \setlength{\unitlength}{\unitlength * \real{\svgscale}}%
    \fi%
  \else%
    \setlength{\unitlength}{\svgwidth}%
  \fi%
  \global\let\svgwidth\undefined%
  \global\let\svgscale\undefined%
  \makeatother%
  \begin{picture}(1,0.82563786)%
    \lineheight{1}%
    \setlength\tabcolsep{0pt}%
    \put(0,0){\includegraphics[width=\unitlength,page=1]{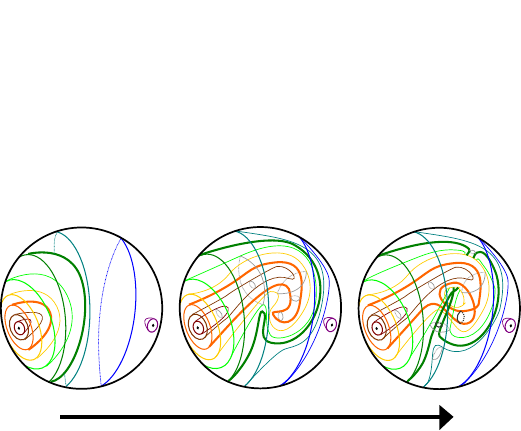}}%
    \put(0.41529764,0.03390663){\color[rgb]{0,0,0}\makebox(0,0)[lt]{\lineheight{0}\smash{\begin{tabular}[t]{l}decreasing $t$\end{tabular}}}}%
    \put(0,0){\includegraphics[width=\unitlength,page=2]{Fig9.pdf}}%
    \put(0.41649973,0.46681118){\color[rgb]{0,0,0}\makebox(0,0)[lt]{\lineheight{0}\smash{\begin{tabular}[t]{l}decreasing $t$\end{tabular}}}}%
    \put(0,0){\includegraphics[width=\unitlength,page=3]{Fig9.pdf}}%
  \end{picture}%
\endgroup%

\caption[A diagram of an interior stabilization movie.]{A diagram of an interior stabilization movie. As $t$ decreases, we perturb $\F_t$ to add a cancelling pair of critical points. Top: an index-$0$,-$1$ or -$2$,-$3$ pair, depending on the orientation of $\F_t$. Bottom: an index-$1$,-$2$ pair (the order depends on the orientation of $\F_t$).}
\label{fig:addcones}
\end{centering}\end{figure}

\begin{figure}\begin{centering}
\begingroup%
  \makeatletter%
  \providecommand\color[2][]{%
    \errmessage{(Inkscape) Color is used for the text in Inkscape, but the package 'color.sty' is not loaded}%
    \renewcommand\color[2][]{}%
  }%
  \providecommand\transparent[1]{%
    \errmessage{(Inkscape) Transparency is used (non-zero) for the text in Inkscape, but the package 'transparent.sty' is not loaded}%
    \renewcommand\transparent[1]{}%
  }%
  \providecommand\rotatebox[2]{#2}%
  \newcommand*\fsize{\dimexpr\f@size pt\relax}%
  \newcommand*\lineheight[1]{\fontsize{\fsize}{#1\fsize}\selectfont}%
  \ifx\svgwidth\undefined%
    \setlength{\unitlength}{302.86214033bp}%
    \ifx\svgscale\undefined%
      \relax%
    \else%
      \setlength{\unitlength}{\unitlength * \real{\svgscale}}%
    \fi%
  \else%
    \setlength{\unitlength}{\svgwidth}%
  \fi%
  \global\let\svgwidth\undefined%
  \global\let\svgscale\undefined%
  \makeatother%
  \begin{picture}(1,0.64883074)%
    \lineheight{1}%
    \setlength\tabcolsep{0pt}%
    \put(0,0){\includegraphics[width=\unitlength,page=1]{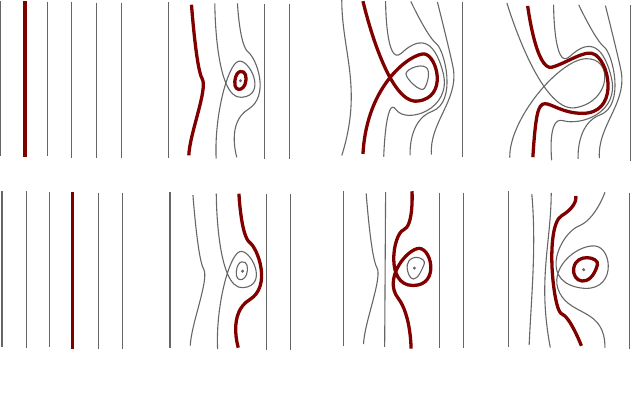}}%
    \put(0.37413083,0.00764628){\color[rgb]{0,0,0}\makebox(0,0)[lt]{\lineheight{0}\smash{\begin{tabular}[t]{l}decreasing $t$\end{tabular}}}}%
    \put(0,0){\includegraphics[width=\unitlength,page=2]{Fig10.pdf}}%
  \end{picture}%
\endgroup%

\caption[Schema of interior stabilization movies.]{Each row is a schematic of an interior stabilization movie (either index-$0$,-$1$ or -$2$,-$3$). In the top movie, the cone is type I and the dot is type {\0}. In the bottom, the cone is type II and the dot is type III.}
\label{fig:stabtype}
\end{centering}\end{figure}

\begin{move}[Boundary stabilization and destabilization]
See Figure~\ref{fig:boundarystab}.

Let $Z^4=B^3\times I$, where $h:Z^4\to I$ is projection onto the second factor. Let $\F_1$ be a standard singular fibration of $B^3\times 1$. View $\F_1: h^{-1}(1)\to S^1$ as a circular Morse function. 
Fix $x\in \boundary h^{-1}(1)$ so that there are no boundary singularities of $\F_1$ in a small neighborhood $\nu(x)$. 
As $t$ decreases to $1/2$, obtain $\F_t$ from $\F_1$ by perturbing to add a cancelling pair of critical points at $x$. Then $\F_{1/2}$ agrees with $\F_1$ outside $\nu(x)$, but has two boundary singularities in $\nu(x)$. 

If the stabilization is $0$-,$1$- or $2$-,$3$- stabilization, then the new boundary singularities are a half-cone and half-dot. If the stabilization is a $1$-,$2$- stabilization, then the new boundary singularities are a half-cone and a bowl. We call $\F_t$ a {\emph{boundary stabilization movie}} (and specify the indices of the birthed singularities). 

Let $\bar{h}:Z^4\to I$ be given by $\bar{h}(z)=1-h(z)$. Let $\G_t=\F_{1-t}$. With respect to $\bar{h}$, we call $\G_t$ a {\emph{boundary destabilization movie}}.

As in the interior stabilization movie, the types of the singularities of $\F_t$ may be taken to agree with their indices, or to be opposite.
\end{move}

\begin{figure}\begin{centering}
\includegraphics[width=.65\textwidth]{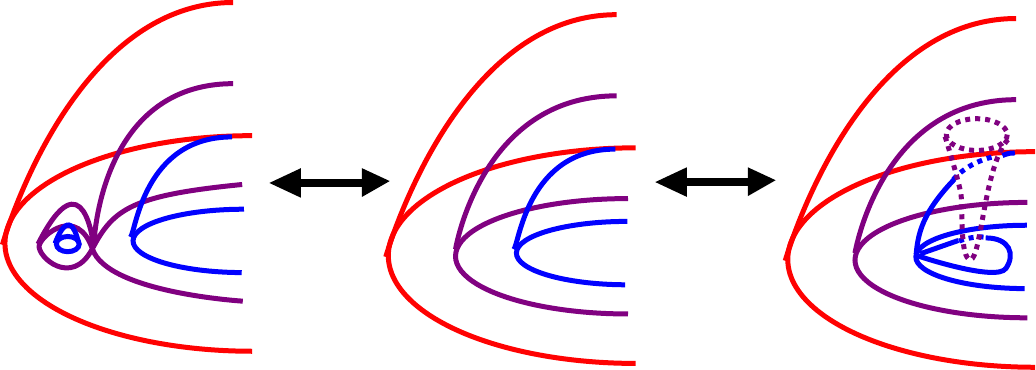}
\caption[As $t$ decreases, we stabilize or destabilize $\F_t$ at a point in $\boundary h^{-1}(t)$.]{As $t$ decreases, we stabilize or destabilize $\F_t$ at a point in $\boundary h^{-1}(t)$.}
\label{fig:boundarystab}
\end{centering}\end{figure}

In Figure~\ref{fig:interiorstab}, we give a valid singularity chart for each interior stabilization and destabilization movie. We give two charts for each, which describe different parametrizations of a (de)stabilization movie. We omit the boundary singularities, which persist through the whole movie.

\begin{figure}\begin{centering}
\includegraphics[width=.5\textwidth]{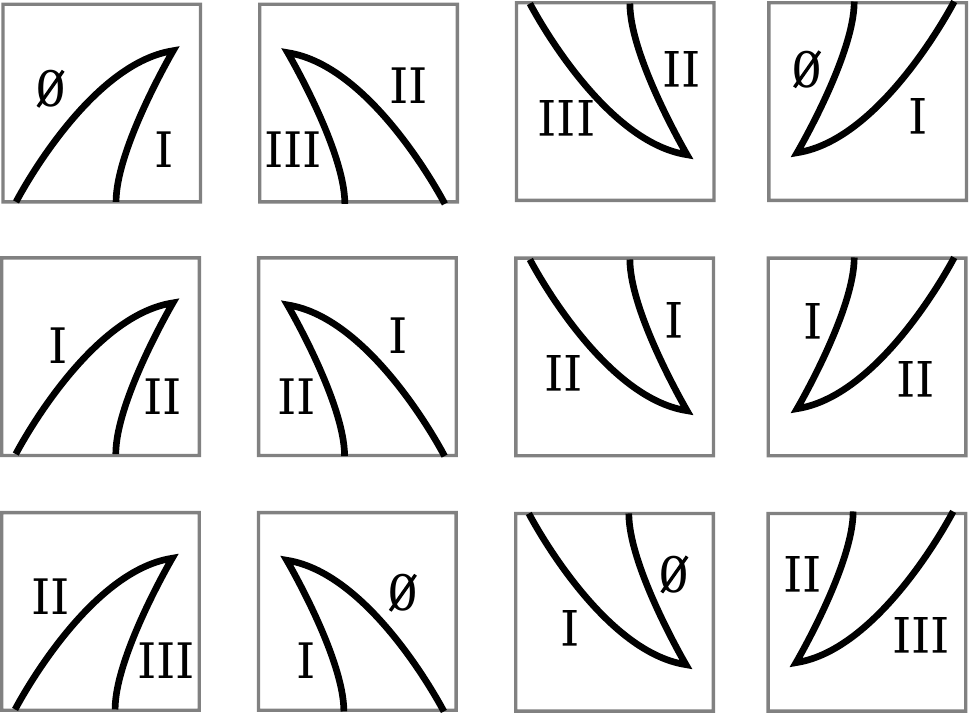}
\caption[Valid singularity charts for each interior stabilization/destabilization.]{Valid singularity charts for each interior stabilization/destabilization. The left half are interior stabilization, while the right half are destabilization. Top row: 0-,~\hspace{-3.4mm}~1-(de)stabilization. Middle row: $1$-,$2$-(de)stabilization. Note that either of the cones may be taken to be the type I cone. Bottom row: $2$-,$3$-(de)stabilization.}
\label{fig:interiorstab}
\end{centering}\end{figure}

In Figure~\ref{fig:boundarystabcharts}, we give a valid singularity chart for each boundary stabilization and destabilization movie. We give two charts for each, which describe different parametrizations of a (de)stabilization movie. We omit other singularities which persist through the whole movie.

\begin{figure}\begin{centering}
\includegraphics[width=.5\textwidth]{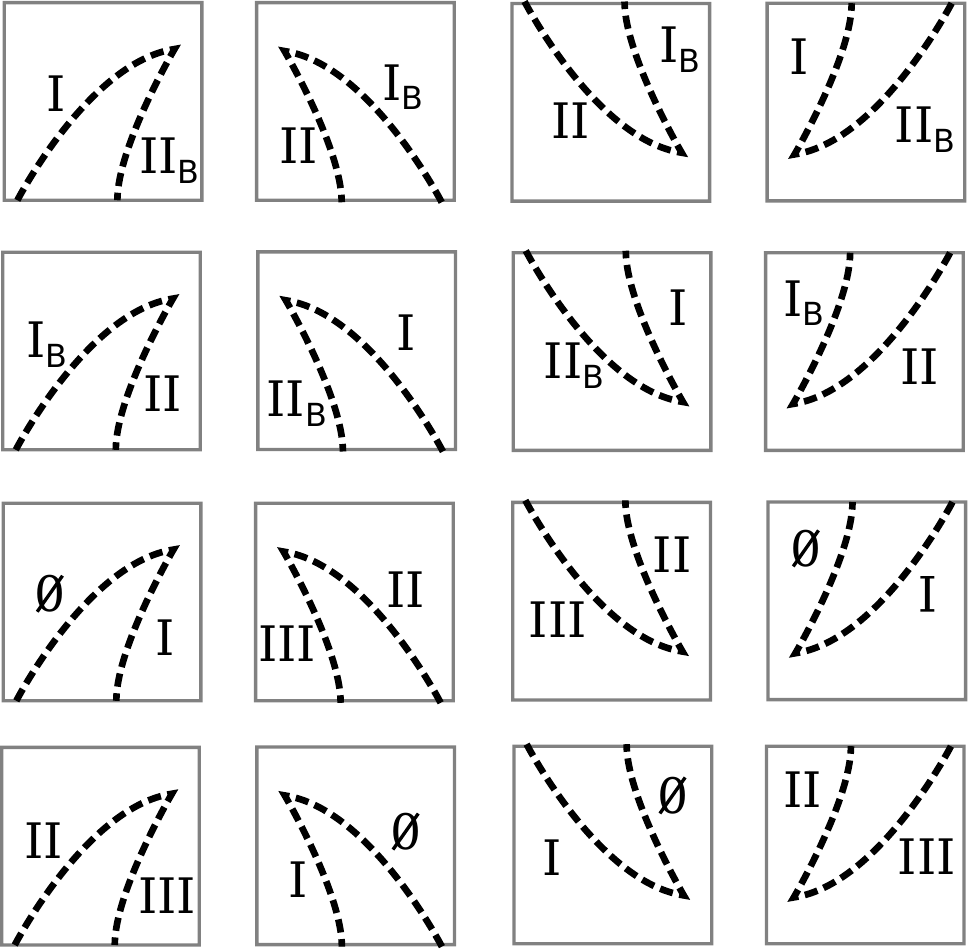}
\caption[Valid singularity charts for each boundary stabilization/destabilization.]{Valid singularity charts for each boundary stabilization/destabilization. The left half are boundary stabilization, while the right half are boundary destabilization. Top and second row: $1$-,~$2$- (de)stabilization. Third row: $0$-,$1$-(de)stabilization. Bottom row: $2$-,$3$-(de)stabilization.}
\label{fig:boundarystabcharts}
\end{centering}\end{figure}

\subsection{Movies on $Z^4$ where $h:Z^4\to I$ has one critical point}\label{thm110}
In the proof of Theorem~\ref{fibrationthm}, we implicitly described movies of singular fibrations on $Z^4$ where $h:Z^4\to I$ has one critical point. Now we make this more explicit. Each of these movies involves the birth or death of one pair of singularities of the same type. At the end of this subsection, we provide a table relating the types of these singularities with the index of the unique critical point of $h$.

In short, a birth of a pair of interior type $\0$, I, II, or III singularities corresponds to a critical point of $h$ of index $4,3,2$, or $1$, respectively. The apparent inversion in those orderings is due to the fact that we are working with decreasing $t$. It may be helpful to think, ``a birth of type $n$ singularities corresponds to a relative $n$-handle in a handle decomposition of $Z^4$"

A death of a pair of interior type $\0$, I, II, or III singularities corresponds to a critical point of $h$ of index $3,2,1$, or $0$, respectively. It may be helpful to think, ``a death of type $n$ singularities corresponds to cancelling a relative $n$-handle, i.e. attaching a relative $(n+1)$-handle in a handle decomposition of $Z^4$."

Similarly, a birth/death of boundary singularities corresponds to attachment of a relative half-handle, in the language of~\cite{BNR}. Attaching a {\emph{right half $n$-handle}} to a manifold $M$ with boundary yields a manifold diffeomorphic to the result of attaching an $n$-handle to $M$. On the other hand, attaching a {\emph{left half $n$-handle} to $M$ yields a manifold diffeomorphic to $M$.

A birth of a pair of boundary type \0, I, or \btwo singularities corresponds to a attaching a relative right half $0,1,$ or $2$-handle, respectively.
A death of a pair of boundary type \0, I, or \btwo singularities corresponds to a attaching a relative left half $1,2,$ or $3$-handle, respectively.

A birth of a pair of boundary type I$_{\text{B}}$, II, or III singularities corresponds to attaching a relative left half $1,2,$ or $3$-handle, respectively.
A death of a pair of boundary type I$_{\text{B}}$, II, or III singularities corresponds to attaching a relative right half $2,3,$ or $4$-handle, respectively.

This discussion is summarized in a table at the end of this subsection.

We start with the case that the critical point of $h$ is contained in $\boundary Z^4$.

\begin{move}[Boundary fusion/compression movie 1]
Let $Z^4\cong B^4$ and $h:Z^4\to I$ so that $h^{-1}(1)=B^3$, $h^{-1}(0)\cong S^1\times D^2$, and $h$ has one critical point. 
Say the critical value is $1/2$.

Let $\F_t$ be the movie of Figure~\ref{fig:boundaryfusion1}. That is, let $\F_t$ be a standard fibration on $B^3$ for $t>1/2$. One leaf of $\F_{1/2}$ is a wedge of two disks along a boundary point. For $t<1/2$, $\F_{t}$ has two half-cone singularities of opposite index. $\F_t$ has a singularity chart containing a birth of two type II half-cones, of opposite index.

We call $\F_t$ the {\emph{first boundary fusion movie}} (or boundary fusion movie 1).

Let $\bar{h}:Z^4\to I$ be given by $\bar{h}(z)=1-h(z)$. Let $\G_t=\F_{1-t}$. With respect to $\bar{h}$, $\G_t$ has a singularity chart containing a death of two type I half-cones, of opposite index.

We call $\G_t$ the {\emph{first boundary compression movie}} (or boundary compression movie 1).

\begin{figure}\begin{centering}
\begingroup%
  \makeatletter%
  \providecommand\color[2][]{%
    \errmessage{(Inkscape) Color is used for the text in Inkscape, but the package 'color.sty' is not loaded}%
    \renewcommand\color[2][]{}%
  }%
  \providecommand\transparent[1]{%
    \errmessage{(Inkscape) Transparency is used (non-zero) for the text in Inkscape, but the package 'transparent.sty' is not loaded}%
    \renewcommand\transparent[1]{}%
  }%
  \providecommand\rotatebox[2]{#2}%
  \newcommand*\fsize{\dimexpr\f@size pt\relax}%
  \newcommand*\lineheight[1]{\fontsize{\fsize}{#1\fsize}\selectfont}%
  \ifx\svgwidth\undefined%
    \setlength{\unitlength}{330.57342289bp}%
    \ifx\svgscale\undefined%
      \relax%
    \else%
      \setlength{\unitlength}{\unitlength * \real{\svgscale}}%
    \fi%
  \else%
    \setlength{\unitlength}{\svgwidth}%
  \fi%
  \global\let\svgwidth\undefined%
  \global\let\svgscale\undefined%
  \makeatother%
  \begin{picture}(1,0.38031432)%
    \lineheight{1}%
    \setlength\tabcolsep{0pt}%
    \put(0,0){\includegraphics[width=\unitlength,page=1]{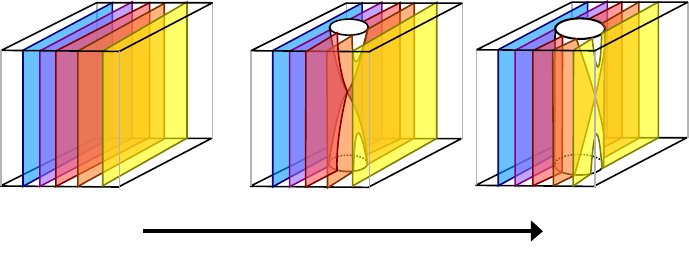}}%
    \put(0.40779068,0.00639541){\color[rgb]{0,0,0}\makebox(0,0)[lt]{\lineheight{0}\smash{\begin{tabular}[t]{l}decreasing $t$\end{tabular}}}}%
  \end{picture}%
\endgroup%

\caption[The first boundary fusion movie.]{The first boundary fusion movie.} 
\label{fig:boundaryfusion1}
\end{centering}\end{figure}
\end{move}

\begin{move}[Boundary fusion/compression movie 2]
Let $Z^4\cong B^4$ and $h:Z^4\to I$ so that $h^{-1}(1)=B^3$, $h^{-1}(0)\cong S^1\times D^2$, and $h$ has one critical point. 
Say the critical value is $1/2$.

Let $\F_t$ be the movie of Figure~\ref{fig:boundaryfusion2}. $\F_1$ is a fibration with no interior singularities and with two bowls (of opposite indices), two half-cones (opposite indices), and two half-dots (opposite indices). (We don't draw the boundary singularities.) 
In particular, there is an arc between the two bowl points which is transverse to the leaves of $\F_1$ -- in Figure~\ref{fig:boundaryfusion2}, this arc is parallel to the $y$-axis. In $\F_{1/2}$, one leaf $\F_{1/2}^{-1}(\theta_0)$ is a disk and contains the critical point of $h$. For $t<1/2$, for $\theta$ near $\theta_0$, $\F_t^{-1}(\theta)$ is an annulus. 

$\F_t$ has a singularity chart containing a death of two type \bone bowls, of opposite index.

We call $\F_t$ the {\emph{second boundary fusion movie}} (or boundary fusion movie 2).

Let $\bar{h}:Z^4\to I$ be given by $\bar{h}(z)=1-h(z)$. Let $\G_t=\F_{1-t}$. With respect to $\bar{h}$, $\G_t$ has a singularity chart containing a birth of two type \btwo bowls, of opposite index.

We call $\G_t$ the {\emph{second boundary compression movie}} (or boundary compression movie 2).

\begin{figure}\begin{centering}
\begingroup%
  \makeatletter%
  \providecommand\color[2][]{%
    \errmessage{(Inkscape) Color is used for the text in Inkscape, but the package 'color.sty' is not loaded}%
    \renewcommand\color[2][]{}%
  }%
  \providecommand\transparent[1]{%
    \errmessage{(Inkscape) Transparency is used (non-zero) for the text in Inkscape, but the package 'transparent.sty' is not loaded}%
    \renewcommand\transparent[1]{}%
  }%
  \providecommand\rotatebox[2]{#2}%
  \newcommand*\fsize{\dimexpr\f@size pt\relax}%
  \newcommand*\lineheight[1]{\fontsize{\fsize}{#1\fsize}\selectfont}%
  \ifx\svgwidth\undefined%
    \setlength{\unitlength}{340.36003533bp}%
    \ifx\svgscale\undefined%
      \relax%
    \else%
      \setlength{\unitlength}{\unitlength * \real{\svgscale}}%
    \fi%
  \else%
    \setlength{\unitlength}{\svgwidth}%
  \fi%
  \global\let\svgwidth\undefined%
  \global\let\svgscale\undefined%
  \makeatother%
  \begin{picture}(1,0.35201235)%
    \lineheight{1}%
    \setlength\tabcolsep{0pt}%
    \put(0,0){\includegraphics[width=\unitlength,page=1]{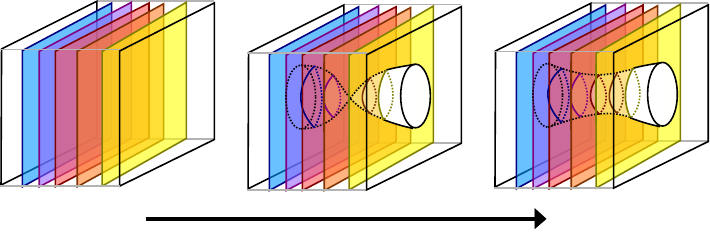}}%
    \put(0.40124909,0.00621121){\color[rgb]{0,0,0}\makebox(0,0)[lt]{\lineheight{0}\smash{\begin{tabular}[t]{l}decreasing $t$\end{tabular}}}}%
  \end{picture}%
\endgroup%

\caption[The second boundary fusion movie.]{The second boundary fusion movie. We only draw a neighborhood of the arc between the two bowls, ignoring the other boundary singularities.} 
\label{fig:boundaryfusion2}
\end{centering}\end{figure}

\end{move}

\begin{move}[Boundary fusion/compression movie 3]
Let $Z^4\cong B^4$ and $h:Z^4\to I$ so that $h^{-1}(1)=B^3\sqcup B^3$, $h^{-1}(0)=B^3$, and $h$ has one critical point. 
Say the critical value is $1/2$.

Let $\F_t$ be the movie of Figure~\ref{fig:boundaryfusion3}.

That is, let $\F_t$ be a standard fibration on each component of $B^3\sqcup B^3$ for $t>1/2$. The leaf of $\F_{1/2}$ containing the critical point of $h$ is a wedge of two disks. For $t<1/2$, $\F_{t}$ has 4 half-dots and two half-cones.

$\F_t$ has a singularity chart containing a birth of two type I half-cones, of opposite index.

We call $\F_t$ the {\emph{third boundary fusion movie}} (or boundary fusion movie 3).

Let $\bar{h}:Z^4\to I$ be given by $\bar{h}(z)=1-h(z)$. Let $\G_t=\F_{1-t}$. With respect to $\bar{h}$, $\G_t$ has a singularity chart containing a death of two type II half-cones, of opposite index.

We call $\G_t$ the {\emph{third boundary compression movie}} (or boundary compression movie 3).

\begin{figure}\begin{centering}
\begingroup%
  \makeatletter%
  \providecommand\color[2][]{%
    \errmessage{(Inkscape) Color is used for the text in Inkscape, but the package 'color.sty' is not loaded}%
    \renewcommand\color[2][]{}%
  }%
  \providecommand\transparent[1]{%
    \errmessage{(Inkscape) Transparency is used (non-zero) for the text in Inkscape, but the package 'transparent.sty' is not loaded}%
    \renewcommand\transparent[1]{}%
  }%
  \providecommand\rotatebox[2]{#2}%
  \newcommand*\fsize{\dimexpr\f@size pt\relax}%
  \newcommand*\lineheight[1]{\fontsize{\fsize}{#1\fsize}\selectfont}%
  \ifx\svgwidth\undefined%
    \setlength{\unitlength}{231.86212975bp}%
    \ifx\svgscale\undefined%
      \relax%
    \else%
      \setlength{\unitlength}{\unitlength * \real{\svgscale}}%
    \fi%
  \else%
    \setlength{\unitlength}{\svgwidth}%
  \fi%
  \global\let\svgwidth\undefined%
  \global\let\svgscale\undefined%
  \makeatother%
  \begin{picture}(1,0.79495567)%
    \lineheight{1}%
    \setlength\tabcolsep{0pt}%
    \put(0,0){\includegraphics[width=\unitlength,page=1]{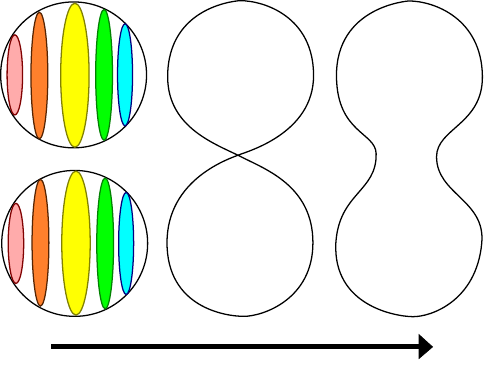}}%
    \put(0.3404778,0.01100929){\color[rgb]{0,0,0}\makebox(0,0)[lt]{\lineheight{0}\smash{\begin{tabular}[t]{l}decreasing $t$\end{tabular}}}}%
    \put(0,0){\includegraphics[width=\unitlength,page=2]{Fig16.pdf}}%
  \end{picture}%
\endgroup%

\caption[The third boundary fusion movie.]{The third boundary fusion movie.} 
\label{fig:boundaryfusion3}
\end{centering}\end{figure}

\end{move}

\begin{move}[Boundary fusion/compression movie 4]
Let $Z^4\cong B^4$ and $h:Z^4\to I$ so that $h^{-1}(1)=B^3\sqcup B^3$, $h^{-1}(0)=B^3$, and $h$ has one critical point. 
Say the critical value is $1/2$.

Let $\F_t$ be the movie of Figure~\ref{fig:boundaryfusion4}.

That is, let $\F_t$ be a standard fibration on each component of $B^3\sqcup B^3$ for $t>1/2$. One leaf of $\F_{1/2}$ is the critical point of $h$. For $t<1/2$, $\F_{t}$ is a standard fibration.

$\F_t$ has a singularity chart containing a death of two type \0 half-dots, of opposite index.

We call $\F_t$ the {\emph{fourth boundary fusion movie}} (or boundary fusion movie 4).

Let $\bar{h}:Z^4\to I$ be given by $\bar{h}(z)=1-h(z)$. Let $\G_t=\F_{1-t}$. With respect to $\bar{h}$, $\G_t$ has a singularity chart containing a birth of two type III half-dots, of opposite index.

We call $\G_t$ the {\emph{fourth boundary compression movie}} (or boundary compression movie 4).

\begin{figure}\begin{centering}
\begingroup%
  \makeatletter%
  \providecommand\color[2][]{%
    \errmessage{(Inkscape) Color is used for the text in Inkscape, but the package 'color.sty' is not loaded}%
    \renewcommand\color[2][]{}%
  }%
  \providecommand\transparent[1]{%
    \errmessage{(Inkscape) Transparency is used (non-zero) for the text in Inkscape, but the package 'transparent.sty' is not loaded}%
    \renewcommand\transparent[1]{}%
  }%
  \providecommand\rotatebox[2]{#2}%
  \newcommand*\fsize{\dimexpr\f@size pt\relax}%
  \newcommand*\lineheight[1]{\fontsize{\fsize}{#1\fsize}\selectfont}%
  \ifx\svgwidth\undefined%
    \setlength{\unitlength}{231.86329759bp}%
    \ifx\svgscale\undefined%
      \relax%
    \else%
      \setlength{\unitlength}{\unitlength * \real{\svgscale}}%
    \fi%
  \else%
    \setlength{\unitlength}{\svgwidth}%
  \fi%
  \global\let\svgwidth\undefined%
  \global\let\svgscale\undefined%
  \makeatother%
  \begin{picture}(1,0.79495409)%
    \lineheight{1}%
    \setlength\tabcolsep{0pt}%
    \put(0,0){\includegraphics[width=\unitlength,page=1]{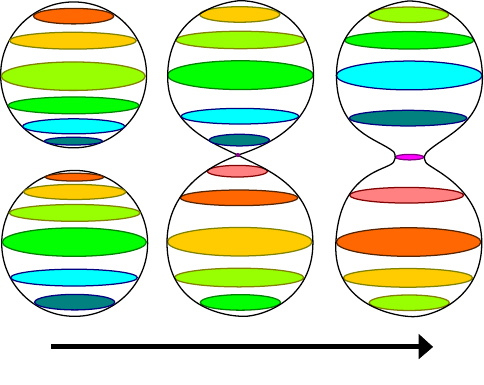}}%
    \put(0.3404906,0.01100919){\color[rgb]{0,0,0}\makebox(0,0)[lt]{\lineheight{0}\smash{\begin{tabular}[t]{l}decreasing $t$\end{tabular}}}}%
  \end{picture}%
\endgroup%

\caption[The fourth boundary fusion movie.]{The fourth boundary fusion movie.} 
\label{fig:boundaryfusion4}
\end{centering}\end{figure}

\end{move}

\begin{move}[Boundary birth/death movie 1]

Let $Z^4\cong B^4$ and $h:Z^4\to I$ so that $h^{-1}(1)=B^3$, $h^{-1}(0)=B^3\setminus\nu($point$)\cong S^2\times I$, and $h$ has one critical point. 
Say the critical value is $1/2$.

Let $\F_t$ be the movie of Figure~\ref{fig:boundarybirth}.

That is, let $\F_t$ be a standard fibration on each component of $B^3$ for $t>1/2$. 
For $t<1/2$, $\F_{t}$ agrees with a standard fibration in a neighborhood of $\boundary B^3$. $\F_t$ has two bowl singularities on the other boundary component of $h^{-1}(t)$.

$\F_t$ has a singularity chart containing a birth of two type \bone bowls, of opposite index.

We call $\F_t$ the {\emph{first boundary birth movie}} (or boundary birth movie 1).

Let $\bar{h}:Z^4\to I$ be given by $\bar{h}(z)=1-h(z)$. Let $\G_t=\F_{1-t}$. With respect to $\bar{h}$, $\G_t$ has a singularity chart containing a death of two type \btwo bowls, of opposite index.

We call $\G_t$ the {\emph{first boundary death movie}} (or boundary death movie 1).

\begin{figure}\begin{centering}
\begingroup%
  \makeatletter%
  \providecommand\color[2][]{%
    \errmessage{(Inkscape) Color is used for the text in Inkscape, but the package 'color.sty' is not loaded}%
    \renewcommand\color[2][]{}%
  }%
  \providecommand\transparent[1]{%
    \errmessage{(Inkscape) Transparency is used (non-zero) for the text in Inkscape, but the package 'transparent.sty' is not loaded}%
    \renewcommand\transparent[1]{}%
  }%
  \providecommand\rotatebox[2]{#2}%
  \newcommand*\fsize{\dimexpr\f@size pt\relax}%
  \newcommand*\lineheight[1]{\fontsize{\fsize}{#1\fsize}\selectfont}%
  \ifx\svgwidth\undefined%
    \setlength{\unitlength}{301.3872474bp}%
    \ifx\svgscale\undefined%
      \relax%
    \else%
      \setlength{\unitlength}{\unitlength * \real{\svgscale}}%
    \fi%
  \else%
    \setlength{\unitlength}{\svgwidth}%
  \fi%
  \global\let\svgwidth\undefined%
  \global\let\svgscale\undefined%
  \makeatother%
  \begin{picture}(1,0.33402592)%
    \lineheight{1}%
    \setlength\tabcolsep{0pt}%
    \put(0,0){\includegraphics[width=\unitlength,page=1]{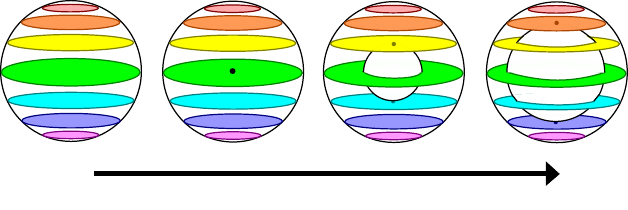}}%
    \put(0.40538415,0.00817182){\color[rgb]{0,0,0}\makebox(0,0)[lt]{\lineheight{0}\smash{\begin{tabular}[t]{l}decreasing $t$\end{tabular}}}}%
  \end{picture}%
\endgroup%

\caption[The first boundary birth movie.]{The first boundary birth movie.} 
\label{fig:boundarybirth}
\end{centering}\end{figure}

\end{move}

\begin{move}[Boundary birth/death movie 2]

Let $Z^4\cong B^4$ and $h:Z^4\to I$ so that $h^{-1}(1)=\emptyset$, $h^{-1}(0)=B^3$, and $h$ has one critical point. 
Say the critical value is $1/2$.

Let $\F_t$ be the movie of Figure~\ref{fig:boundarybirth2}.

That is, let $\F_t$ be a standard fibration on $B^3$ for $t<1/2$. 
$\F_{1/2}$ maps $h^{-1}(1/2)=\pt$ to some value of $\theta$. 

$\F_t$ has a singularity chart containing a birth of two type \0 half-dots, of opposite index.

We call $\F_t$ the {\emph{second boundary birth movie}} (or boundary birth movie 2).

Let $\bar{h}:Z^4\to I$ be given by $\bar{h}(z)=1-h(z)$. Let $\G_t=\F_{1-t}$. With respect to $\bar{h}$, $\G_t$ has a singularity chart containing a death of two type III half-dots, of opposite index.

We call $\G_t$ the {\emph{second boundary death movie}} (or boundary death movie 2).

\begin{figure}\begin{centering}
\begingroup%
  \makeatletter%
  \providecommand\color[2][]{%
    \errmessage{(Inkscape) Color is used for the text in Inkscape, but the package 'color.sty' is not loaded}%
    \renewcommand\color[2][]{}%
  }%
  \providecommand\transparent[1]{%
    \errmessage{(Inkscape) Transparency is used (non-zero) for the text in Inkscape, but the package 'transparent.sty' is not loaded}%
    \renewcommand\transparent[1]{}%
  }%
  \providecommand\rotatebox[2]{#2}%
  \newcommand*\fsize{\dimexpr\f@size pt\relax}%
  \newcommand*\lineheight[1]{\fontsize{\fsize}{#1\fsize}\selectfont}%
  \ifx\svgwidth\undefined%
    \setlength{\unitlength}{223.68468998bp}%
    \ifx\svgscale\undefined%
      \relax%
    \else%
      \setlength{\unitlength}{\unitlength * \real{\svgscale}}%
    \fi%
  \else%
    \setlength{\unitlength}{\svgwidth}%
  \fi%
  \global\let\svgwidth\undefined%
  \global\let\svgscale\undefined%
  \makeatother%
  \begin{picture}(1,0.44167613)%
    \lineheight{1}%
    \setlength\tabcolsep{0pt}%
    \put(0,0){\includegraphics[width=\unitlength,page=1]{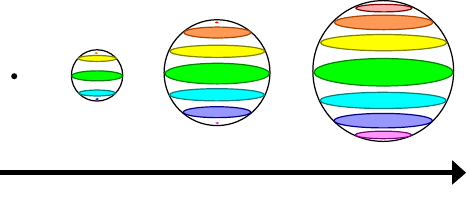}}%
    \put(0.34537134,0.00975632){\color[rgb]{0,0,0}\makebox(0,0)[lt]{\lineheight{0}\smash{\begin{tabular}[t]{l}decreasing $t$\end{tabular}}}}%
  \end{picture}%
\endgroup%

\caption[The second boundary birth movie.]{The second boundary birth movie.} 
\label{fig:boundarybirth2}
\end{centering}\end{figure}

\end{move}

\begin{move}
Let $h:Z^4\to I$ be Morse and $\F_t$ on $Z^4$ be a boundary fusion, compression, birth, or death movie. Let $x\in\boundary Z^4$ be the critical point of $h$. Let \[X^4=Z^4\cup_{\boundary Z^4\cap\nu(x)}\overline{Z^4}\] be a double of $Z^4$ along $x$, and $\tilde{h}:X^4\to I$ be given by $\tilde{h}=h\cup\overline{h}$.

Let $\G_t$ be a movie of singular fibrations on $X^4$ given by $\G_t=\F_t\cup\overline{\F_t}$ (that is, $\G_t$ agrees with $\F_t$ on $Z^4$, and agrees with $\overline{\F_t}$ on $\overline{Z^4}$).

We call $\G_t$ an {\emph{interior singularity movie}}. Let $\mathcal{C}_{\F}$ and $\mathcal{C}_{\G}$ be valid charts for $\F$ and $\G$, respectively. We give a table with the name of $\G_t$ and a description of part of $\mathcal{C}_{\F}$ and $\mathcal{C}_{\G}$.

\begin{center}
{\scriptsize
\begin{tabular}{ccccc}$\F_t$&$\G_t$&in $\mathcal{C}_{\F}$ (boundary)&in $\mathcal{C}_{\G}$ (interior)&index of c.p. of $h$
\\Boundary fusion 1&Index-2 birth&II birth&II birth&2
\\Boundary fusion  2&Index-1 death&\bone death&I death&2
\\Boundary fusion  3&First index-1 birth&I birth&I birth&3
\\Boundary fusion  4&Index-0 death&\0 death&\0 death&3
\\Boundary birth 1&Second Index-2 birth&\bone birth&I birth&2
\\Boundary birth 2&Index-0 birth&\0 birth&\0 birth&4
\\Boundary compression 1&Index-1 death&I death&I death&2
\\Boundary compression 2&Index-2 birth&\btwo birth&II birth&2
\\Boundary compression 3&First Index-2 death&II death&II death&1
\\Boundary compression 4&Index-3 birth&III birth&III birth&1
\\Boundary death 1&Second Index-1 death&\btwo death&II death&2
\\Boundary death 2&Index-3 death&III death&III death&0
\end{tabular}}
\end{center}

The names of these movies may seem unwieldy, but the logic is for the name of $\G_t$ to simply describe the critical points introduced in $\G_t$ as $t$ decreases from $1$ to $0$. An index-$k$ birth corresponds to an index-$(4-k)$ critical point of the ambient height function, while an index-$k$ death corresponds to an index-$(3-k)$ critical point of the ambient height function. (Again, there is some confusion as we generally work from top to bottom, but this viewpoint is convenient for this paper.)

Recall from the beginning of this section that a birth of interior type $n$ singularities corresponds to attachment of a relative $n$-handle. A death of interior type $n$ singularities corresponds to attachment of a relative $(n+1)$-handle.

\end{move}

In Figure~\ref{fig:boundarychart}, we give valid singularity charts for the boundary fusion, compression, birth, death movies and the interior singularity movies, omitting the persisting singularities.

\begin{figure}\begin{centering}
\includegraphics[width=.6\textwidth]{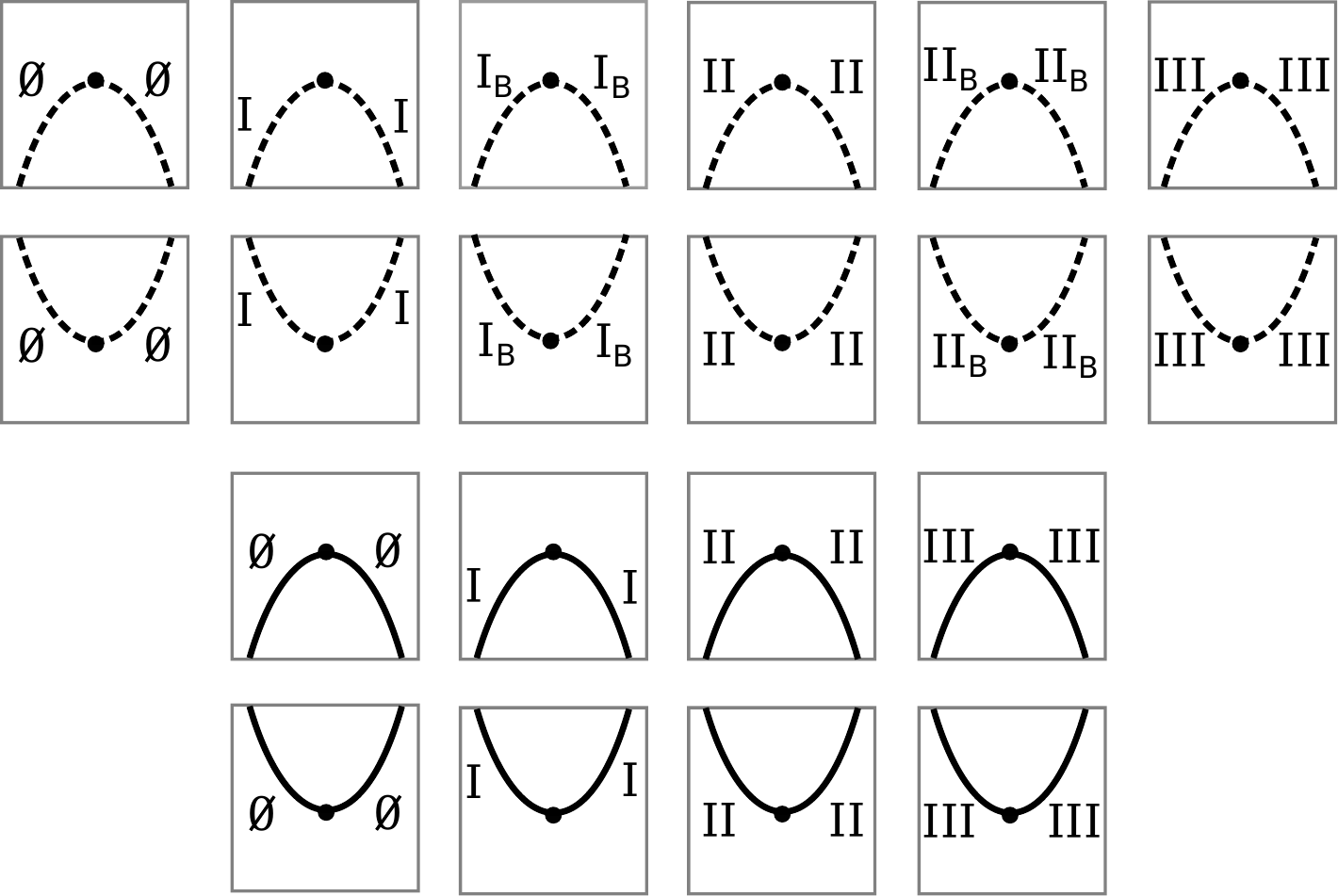}
\caption[Valid singularity charts for the boundary fusion, compression, birth, death and interior singularity movies.]{Valid singularity charts for the boundary fusion, compression, birth, death and interior singularity movies.}
\label{fig:boundarychart}
\end{centering}\end{figure}

\subsection{Positioning singularities in a fibration}
In this subsection, we define movies $\F_t$ on $M^3\times I$ which allow us to ``position'' the singularities of $\F_1$, in the sense of choosing the singular values of $\F_0$ (with some constraints). Imprecisely, both of these movies essentially ``flow'' some choice of leaves of $\F_1$ through a singularity. These movies exchange the heights of critical points of $\F_t$ as $t$ decreases.

\begin{move}[Positioning a dot or half-dot along an arc]\label{flowdot}
Let $\F_1$ be a singular fibration on $M^3$. Suppose $q$ is a cone point, and $p$ is a dot or half-dot of $\F_1$. Assume there exists an injective $\gamma:[0,1]\to M^3$ transverse to the leaves of $\F_1$ so that $\gamma(0)=q$, $\gamma(1)=p$.

If $p\in\mathring{M}^3$, then assume $\gamma(I)\subset\mathring{M}^3$ and the leaf component of $\F_1$ containing $\gamma(t)$ is a $2$-sphere for $0<t<1$.

If $p\in\boundary M^3$, then assume $\gamma(I)\cap\boundary M^3=p$ and the leaf component of $\F_1$ containing $\gamma(t)$ is a $2$-hemisphere for $0<t<1$.

Fix $\epsilon>0$ small, and let $\eta:[0,1]\to M^3$ be given by $\eta(s)=\gamma((1-\epsilon)s+\epsilon)$.

We define a movie $\F_t$ on $M^3\times I$ as follows:
\begin{itemize}
\item If $x\in M^3$ is not in a leaf component of $\F_1$ meeting $\eta([0,1])$, then $\F_t(x)=\F_1(x)$ for all $t$.
\item Let $B$ be the set on which we have not defined $\F_t$. If $p$ is a dot singularity (in the interior of $h^{-1}(t)$, write $\overline{B}=S^2\times I/(S^2\times 1\sim\pt)$ so $p=S^2\times 1/\sim$, and $\F_1(S^2\times s)=\F_1(\eta(s))$.
\item If $p$ is a half-dot,  write $B=D^2\times I/(D^2\times 1\sim\pt)$ so $p=D^2\times 1/\sim$, and $\F_1(D^2\times s)=\F_1(\gamma(s))$.
\item Define $\F_t$ by \[\F_t(S^2\times s)=\F_1(\eta(ts+(1-t)\epsilon s).\] 
\end{itemize}

As $t$ decreases, we are shrinking the range of $\F_t|_B$ while fixing $\F_t|_{M^3\setminus B}$. See Figure~\ref{fig:flowdot} for an illustration.

We say that $\mathcal{F}_t$ {\emph{positions $p$}}. We will use the phrase, ``position the dot or half-dot $p$ along $\gamma$," to mean, ``play $\mathcal{F}_t$ in a neighborhood of $\gamma$, where $p$ is an endpoint of $\gamma$."

\begin{figure}\begin{centering}
\scalebox{0.8}{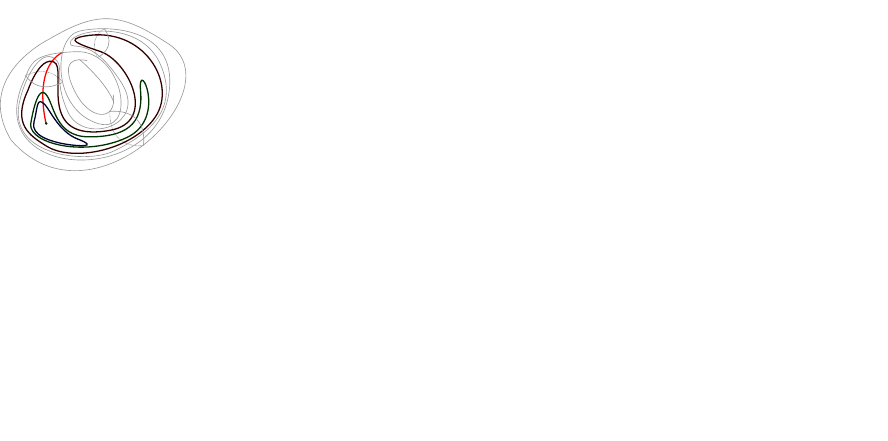}
\caption[Diagram of positioning a dot or half-dot.]{Diagram of positioning a dot or half-dot. We draw a neighborhood of $B$, which might be a solid torus or a ball. On the light gray leaf components (and all components not illustrated), $\F_t$ is fixed for all $t$. As $t$ decreases, the range of $\F_t(B)$ contracts.}
\label{fig:flowdot}
\end{centering}\end{figure}

\end{move}

Note that when positioning a dot or half-dot, the (components of) the level sets of $\F_t$ are identical for all $t$. In the beginning of this subsection, we said this movie would ``position'' a singularity. The singularities $p,q\in M^3$ of $\F_1$ are also singularities of $\F_0$; we have not moved the singular points within $M^3$. However, in $M^3\times 0$ the arc $\gamma$ is very short in the sense that $\F_0(\gamma(I))$ is very short in $S^1$. 

If $\epsilon>0$ is taken to be sufficiently small, then $\gamma((0,1))$ does not intersect any singular leaf of $\F_0$.

\begin{move}[Positioning a cone along an arc]\label{flowcone}
Consult Figure~\ref{fig:flowcone}. This image conveys the idea of this movie better than the ensuing text.

 Let $\F_1$ be a singular fibration on $M^3$. Suppose $q$ is a cone point of $\F_1$. Let $\gamma:[-1,1]\to \mathring{M}^3$ be injective so that $\gamma(0)=q$, $\F_1(\gamma(s))=\F_1(\gamma(-s))$ for all $s$, and $\gamma(I)$ is transverse to the leaves of $\F_1$ and does not meet any singularities except $q$. Let $B$ be a small ball containing $\gamma(I)$. Choose $\gamma$ so that near $q$, $\gamma(\pm\epsilon)$ meets the two-sheeted hyperboloid leaves of $\F_1|_B$ (rather than the one-sheeted leaves). 
 
 We define the movie $\F_t$ by:
 \begin{itemize}
 \item If $x$ is not in $\mathring B$, then $\F_t(x)=\F_1(x)$ for all $t$.
 \item The leaves of $\F_t|_B$ are (up to isotopy) hyperboloids, except for one cone.
 \item At time $t$, the cone of $\F_t|_{B}$ lies in $\F_t^{-1}\F_1(\gamma(1-t))$. The leaves of $\F_1|_{B}$ meeting $\gamma(s)$ for $|s|>1-t$ are still two-sheeted hyperboloid leaves of $\F_t|_B$, while the leaves of $\F_1$ meeting $\gamma(s)$ for $|s|<1-t$ become one-sheeted hyperboloid leaves of $\F_t|_B$.
 \end{itemize}
 
 The purpose of this movie is that in $\F_0$, the cone singularity $q$ now lies in $\F_0^{-1}(\F_1(\gamma(\pm 1)))$.

\begin{figure}\begin{centering}
\scalebox{0.9}{
\begingroup%
  \makeatletter%
  \providecommand\color[2][]{%
    \errmessage{(Inkscape) Color is used for the text in Inkscape, but the package 'color.sty' is not loaded}%
    \renewcommand\color[2][]{}%
  }%
  \providecommand\transparent[1]{%
    \errmessage{(Inkscape) Transparency is used (non-zero) for the text in Inkscape, but the package 'transparent.sty' is not loaded}%
    \renewcommand\transparent[1]{}%
  }%
  \providecommand\rotatebox[2]{#2}%
  \newcommand*\fsize{\dimexpr\f@size pt\relax}%
  \newcommand*\lineheight[1]{\fontsize{\fsize}{#1\fsize}\selectfont}%
  \ifx\svgwidth\undefined%
    \setlength{\unitlength}{370.87045264bp}%
    \ifx\svgscale\undefined%
      \relax%
    \else%
      \setlength{\unitlength}{\unitlength * \real{\svgscale}}%
    \fi%
  \else%
    \setlength{\unitlength}{\svgwidth}%
  \fi%
  \global\let\svgwidth\undefined%
  \global\let\svgscale\undefined%
  \makeatother%
  \begin{picture}(1,0.56337137)%
    \lineheight{1}%
    \setlength\tabcolsep{0pt}%
    \put(0,0){\includegraphics[width=\unitlength,page=1]{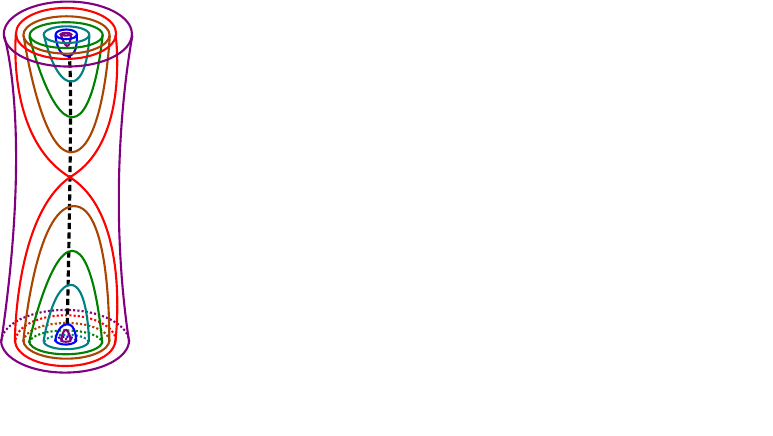}}%
    \put(0.11290685,0.33003697){\color[rgb]{0,0,0}\makebox(0,0)[lt]{\lineheight{0}\smash{\begin{tabular}[t]{l}$\gamma$\end{tabular}}}}%
    \put(0,0){\includegraphics[width=\unitlength,page=2]{Fig22.pdf}}%
    \put(0.39952649,0.02223116){\color[rgb]{0,0,0}\makebox(0,0)[lt]{\lineheight{0}\smash{\begin{tabular}[t]{l}decreasing $t$\end{tabular}}}}%
  \end{picture}%
\endgroup%
}
\caption[Diagram of positioning a cone along an arc $\gamma$.]{Diagram of positioning a cone along an arc $\gamma$. We draw a neighborhoorhood of $\gamma$. As $t$ decreases, the value of $\F_t($cone point$)$ changes from $\F_1(\gamma(0))$ to $\F_1(\gamma(\pm1))$.}
\label{fig:flowcone}
\end{centering}\end{figure}

As described, a chart for $\F_t$ has a cone of type I. We will now reparametrize $\F_t$ so that the cone is type II. Let $B$ be a small neighborhood of $\gamma([-1,1])$ so that $q$ is the only singularity of $\F_1$ contained in $B$. Let $f:S^1\to[0,m]$ be a narrow bump function on $\F_1(q)$. If $q$ is index-$1$, let $f:=-f$.

Let $g_t:S^1\to S^1$ be a smooth family of automorphisms with $g_1=\id$ and $d g_t/dt=-f$. Obtain a movie of singular fibrations $\tilde{\G}_t$ on $B\times I$ defined by $\tilde{G}_t(x)=g_t\circ\F_t(x)$ for $x\in h^{-1}(t)$. If $m$ is taken to be sufficiently large, then the repositioned cone is type II in $\tilde{\G}_t$. Now isotope $\tilde{\G}_t$ near $\boundary B$ so that $\tilde{\G}_t|_{\nu(\boundary B)}$ agrees with $\F_t$. Let $\G_t$ be a movie on $M^3\times I$ given by $\G_t(x)=\begin{cases}\tilde{\G}_t(x)\mid x\in B\\\F_t(x)\mid x\not\in B\end{cases}$.  The movies $\G_t$ and $\F_t$ agree near every singularity of $\F_t$ except $q$, so their charts are identical except for the arc of $q$, which has opposite slope (and hence opposite type) in the other chart. See Figure~\ref{fig:positionreparam}.

\begin{figure}\begin{centering}
\scalebox{0.9}{
\begingroup%
  \makeatletter%
  \providecommand\color[2][]{%
    \errmessage{(Inkscape) Color is used for the text in Inkscape, but the package 'color.sty' is not loaded}%
    \renewcommand\color[2][]{}%
  }%
  \providecommand\transparent[1]{%
    \errmessage{(Inkscape) Transparency is used (non-zero) for the text in Inkscape, but the package 'transparent.sty' is not loaded}%
    \renewcommand\transparent[1]{}%
  }%
  \providecommand\rotatebox[2]{#2}%
  \newcommand*\fsize{\dimexpr\f@size pt\relax}%
  \newcommand*\lineheight[1]{\fontsize{\fsize}{#1\fsize}\selectfont}%
  \ifx\svgwidth\undefined%
    \setlength{\unitlength}{371.31609098bp}%
    \ifx\svgscale\undefined%
      \relax%
    \else%
      \setlength{\unitlength}{\unitlength * \real{\svgscale}}%
    \fi%
  \else%
    \setlength{\unitlength}{\svgwidth}%
  \fi%
  \global\let\svgwidth\undefined%
  \global\let\svgscale\undefined%
  \makeatother%
  \begin{picture}(1,0.56322926)%
    \lineheight{1}%
    \setlength\tabcolsep{0pt}%
    \put(0,0){\includegraphics[width=\unitlength,page=1]{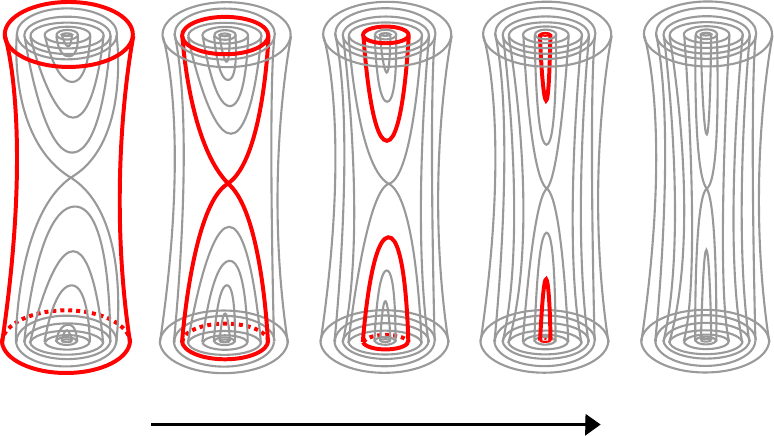}}%
    \put(0.40024727,0.02220435){\color[rgb]{0,0,0}\makebox(0,0)[lt]{\lineheight{0}\smash{\begin{tabular}[t]{l}decreasing $t$\end{tabular}}}}%
  \end{picture}%
\endgroup%
}
\caption[Ensuring a positioned cone is type II.]{When positioning a cone along an arc $\gamma$, suppose $\gamma$ meets no other singular leaf. By reparametrizing $\F_t$, we find a movie $\G_t$ in which the positioned cone is type II. In bold: $\G_t^{-1}(\theta_0)$ near the positioned cone, as $t$ decreases.}
\label{fig:positionreparam}
\end{centering}

\end{figure}

We say that $\mathcal{F}_t$ and $\mathcal{G}_t$ {\emph{position $q$ along an arc}}. We will use the phrase, ``position the cone or half-cone $q$ along $\gamma$," to mean, ``play $\mathcal{F}_t$ or $\mathcal{G}_t$ in a neighborhood of $\gamma$, where $q$ is on $\gamma$." If $q$ already has a type, then implicitly we choose $\mathcal{F}_t$ or $\mathcal{G}_t$ to maintain the type of $q$. 
\end{move}

\begin{move}[Positioning a cone along a disk]\label{flowcone}
This movie is the inverse of positioning a cone along an arc. However, we wish to write this movie out explicitly.

Let $\F_1$ be a singular fibration on $M^3$. Suppose $q$ is a cone point of $\F_1$. Let $D$ be an improperly embedded disk in $\mathring{M}^3$ intersecting $q$, with $\boundary D$ contained in one leaf of $\F_1$. Parameterize $D$ so $D=S^1\times I/(S^1\times 1\sim\pt)$, where $S^1\times 0=\boundary D$ and $S^1\times 1/\sim=q$. Assume that each $S^1\times r$ lives in a single leaf of $\F_1$, which intersects $D$ transversely for $0<r<1$. Let $B$ be an open neighborhood of $D$.

 We define the movie $\F_t$ by:
 \begin{itemize}
 \item If $x$ is not in $B$, then $\F_t(x)=\F_1(x)$ for all $t$.
 \item The leaves of $\F_t|_B$ are (up to isotopy) hyperboloids, except one cone.
 \item At time $t$, the cone of $\F_t|_B$ lies in $\F_t^{-1}\F_1(S^1\times t)$. The leaves of $\F_1|_B$ meeting $D$ in $S^1\times r$ are still one-sheeted hyperboloid leafs of $\F_t|_B$ when $r<t$. When $r>1$, they become two-sheeted hyperboloid leafs of $\F_t|_B$.
 \end{itemize}

 The purpose of this movie is that in $\F_0$, the cone singularity $q$ now lies in $\F_0^{-1}(\F_1(\boundary D))$.

As described, the repositioned cone is type II in $\F_t$. 

Similarly to the ``positioning a cone along an arc'' movie, we may reparameterize this movie to find movie $\mathcal{G}_t$, in which the repositioned cone is type I (by first reparametrizing in a small neighborhood of $D\times I$ and then extending to $M^3\times I$).

We say that $\mathcal{F}_t$ and $\mathcal{G}_t$ {\emph{position $q$ along a disk}}. We will use the phrase, ``position the cone or half-cone $q$ along $D$," to mean, ``play $\mathcal{F}_t$ or $\mathcal{G}_t$ in a neighborhood of $D$, where $q$ is the center of disk $D$." If $q$ already has a type, then implicitly we choose $\mathcal{F}_t$ or $\mathcal{G}_t$ to maintain the type of $q$.

\end{move}

In Figure~\ref{fig:positionchart}, we give singularity charts for the singularity positioning movies.
\begin{figure}\begin{centering}
\begingroup%
  \makeatletter%
  \providecommand\color[2][]{%
    \errmessage{(Inkscape) Color is used for the text in Inkscape, but the package 'color.sty' is not loaded}%
    \renewcommand\color[2][]{}%
  }%
  \providecommand\transparent[1]{%
    \errmessage{(Inkscape) Transparency is used (non-zero) for the text in Inkscape, but the package 'transparent.sty' is not loaded}%
    \renewcommand\transparent[1]{}%
  }%
  \providecommand\rotatebox[2]{#2}%
  \newcommand*\fsize{\dimexpr\f@size pt\relax}%
  \newcommand*\lineheight[1]{\fontsize{\fsize}{#1\fsize}\selectfont}%
  \ifx\svgwidth\undefined%
    \setlength{\unitlength}{228.97782934bp}%
    \ifx\svgscale\undefined%
      \relax%
    \else%
      \setlength{\unitlength}{\unitlength * \real{\svgscale}}%
    \fi%
  \else%
    \setlength{\unitlength}{\svgwidth}%
  \fi%
  \global\let\svgwidth\undefined%
  \global\let\svgscale\undefined%
  \makeatother%
  \begin{picture}(1,0.74751284)%
    \lineheight{1}%
    \setlength\tabcolsep{0pt}%
    \put(0,0){\includegraphics[width=\unitlength,page=1]{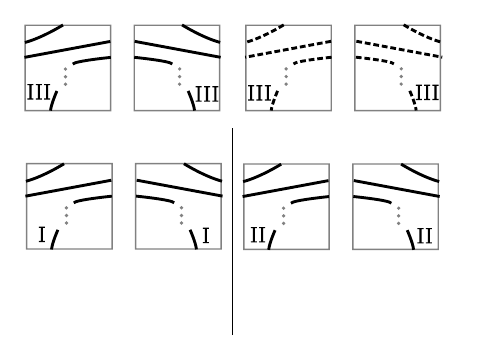}}%
    \put(0.20808937,0.43229036){\color[rgb]{0,0,0}\makebox(0,0)[t]{\lineheight{0}\smash{\begin{tabular}[t]{c}Position cone along arc\end{tabular}}}}%
    \put(0.78008346,0.42979937){\color[rgb]{0,0,0}\makebox(0,0)[t]{\lineheight{0}\smash{\begin{tabular}[t]{c}Position cone along disk\end{tabular}}}}%
    \put(0.27446746,0.71975575){\color[rgb]{0,0,0}\makebox(0,0)[lt]{\lineheight{0}\smash{\begin{tabular}[t]{l}Position dot or half-dot\end{tabular}}}}%
    \put(0,0){\includegraphics[width=\unitlength,page=2]{Fig24.pdf}}%
  \end{picture}%
\endgroup%

\caption[Valid singularity charts for the positioning movies.]{Valid singularity charts for the positioning movies. Each chart contains one arc (we omit the arcs of the singularities we are not positioning).} 
\label{fig:positionchart}
\end{centering}\end{figure}

\subsection{Cancelling interior and boundary singularities}

\begin{move}[Cancelling a cone with a half-cone]
See Figure~\ref{fig:intboundarycancel} (top).

Let $\F_1$ be a singular fibration on $B^3$ which has two half-cone singularities $p_1,p_2$ on $\boundary B^3$ and one cone $q$ in the interior of $B^3$. (Necessarily, the half-cones are both of index $i$ while the cone is index $3-i$.) 

Let $D=S^1\times I/(S^1\times1\sim\pt)$ be a disk improperly embedded in $B^3$ so that $D\cap p_2=\emptyset$, $D\cap\boundary B^3=p_1\in S^1\times 0$, $q=S^1\times 1$, $D$ is transverse to the leaves of $\F$ away from $p_1$ and $q$ and $S^1\times s$ is contained in a single leaf for each $s$. We proceed as if positioning $q$ along $D$. That is, obtain $\F_t$ by perturbing $\F_1$ in a neighborhood of $D$ so that the cone lies in $\F_t^{-1}(\F_t(S^1\times t))$. As $t$ grows small, take the cone point to be very close to $p_1$. At $t=0$, the cone meets $p_1$ and becomes a half-cone of index $3-i$.

As written, the interior cone of $\F_t$ is type II while the cancelled boundary half-cone is type I. Now we describe a movie $\G_t$ with the same level sets, in which the types are reversed.

Let $\G_1=\F_1$. 
Let $D'=[0,\pi]\times I/([0,\pi]\times1\sim\pt)$ be a disk improperly embedded in $B^3$ so that $D'\cap p_2=\emptyset$, $p_1= [0,\pi]\times 1$, $q\in[0,\pi]\times 0$, $D'\cap\boundary B^3=\{0,\pi\}\times I$, $D'$ is transverse to the leaves of $\F$ away from $p_1$ and $q$ and $[0,\pi]\times s$ is contained in a single leaf for each $s$. We proceed as if positioning $p_1$ along $D'$. That is, obtain $\G_t$ by perturbing $\G_1$ in a neighborhood of $D'$ so that the half-cone lies in $\G_t^{-1}(\F_t([0,\pi]\times t))$. As $t$ grows small, take the cone point to be very close to $p_1$. At $t=0$, the cone meets the boundary and becomes a half-cone of index $3-i$.

We say that $\F_t$ and $\G_t$ are movies of a {\emph{cone, half-cone death}}. Turning $\F_t$ or $\G_t$ upside-down yields a movie of a {\emph{cone, half-cone birth}}.

\end{move}

\begin{figure}\begin{centering}
\scalebox{0.85}{
\begingroup%
  \makeatletter%
  \providecommand\color[2][]{%
    \errmessage{(Inkscape) Color is used for the text in Inkscape, but the package 'color.sty' is not loaded}%
    \renewcommand\color[2][]{}%
  }%
  \providecommand\transparent[1]{%
    \errmessage{(Inkscape) Transparency is used (non-zero) for the text in Inkscape, but the package 'transparent.sty' is not loaded}%
    \renewcommand\transparent[1]{}%
  }%
  \providecommand\rotatebox[2]{#2}%
  \newcommand*\fsize{\dimexpr\f@size pt\relax}%
  \newcommand*\lineheight[1]{\fontsize{\fsize}{#1\fsize}\selectfont}%
  \ifx\svgwidth\undefined%
    \setlength{\unitlength}{382.15431994bp}%
    \ifx\svgscale\undefined%
      \relax%
    \else%
      \setlength{\unitlength}{\unitlength * \real{\svgscale}}%
    \fi%
  \else%
    \setlength{\unitlength}{\svgwidth}%
  \fi%
  \global\let\svgwidth\undefined%
  \global\let\svgscale\undefined%
  \makeatother%
  \begin{picture}(1,0.64633749)%
    \lineheight{1}%
    \setlength\tabcolsep{0pt}%
    \put(0,0){\includegraphics[width=\unitlength,page=1]{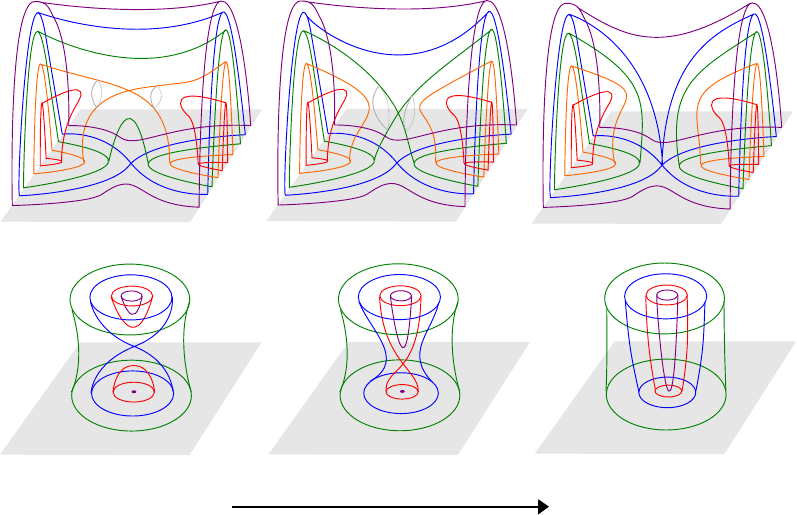}}%
    \put(0.43182736,0.01999474){\color[rgb]{0,0,0}\makebox(0,0)[lt]{\lineheight{0}\smash{\begin{tabular}[t]{l}decreasing $t$\end{tabular}}}}%
  \end{picture}%
\endgroup%
}
\caption[Schema of the cone, half-cone death and cone, half-dot death.]{Top: a schematic of the cone, half-cone death. Bottom: a cone, half-dot death.}
\label{fig:intboundarycancel}
\end{centering}\end{figure}

\begin{move}[Cancelling a cone with a half-dot]

See Figure~\ref{fig:intboundarycancel} (bottom).

 Let $\F_1$ be a singular fibration on $M^3$. Suppose $q$ is a cone point of $\F_1$. Let $\gamma:[0,1]\to M^3$ so that $\gamma(0)=q$, $\gamma(1)$ is a half-dot $p$, and the interior of $\gamma$ is disjoint from $\boundary M^3$. Assume $\gamma((0,1))$ is transverse to the leaves of $\F_1$ and does not meet any singular leaf components of $\F_1$. 
Assume that near $q$, $\gamma(\epsilon)$ meets the two-sheeted hyperboloid leaves of $\F_1|_{\nu(q)}$ (rather than the one-sheeted leaves). 
 
 From $t=1$ to $1/2$, position the half-dot along $\gamma$ so that 
 $\gamma((0,1))$ does not meet any singular leaves of $\F_{1/2}$. Extend $\gamma$ to an interval $[-\delta,1]$ so that $\F_{1/2}(\gamma(s))=\F_{1/2}(\gamma(-s))$, as in the ``positioning a cone along an arc'' movie.

 Let $B$ be a small neighborhood of $\gamma((-\delta,1))$.
 
 We define the movie $\F_t$ by:
 \begin{itemize}
 \item If $x$ is not in $\mathring B$, then $\F_t(x)=\F_1(x)$ for all $t$.
 \item The leaves of $\F_t|_B$ are (up to isotopy) hyperboloids, except one cone.
 \item At time $t$, the cone of $\F_t\mid B$ lies in $\F_t^{-1}\F_1(\gamma(1-t))$. The leaves of $\F_1\mid B$ meeting $\gamma(s)$ for $|s|<t$ are still two-sheeted hyperboloid leaves of $\F_t|_B$, while the leaves of $\F_1$ meeting $\gamma(s)$ for $|s|>t>0$ become one-sheeted hyperboloid leaves of $\F_t|_B$.
 \end{itemize}
 
 As $t$ grows small, take the cone to be very close to $p$. At $t=0$, the cone meets $p$ and becomes a bowl singularity.

As described, the cone is type I and the half-dot is type \0. Because $\gamma((-\delta,1))$ does not meet any singular leaves of $\F_{1/2}$, we can reparametrize $\F_t$ to find a movie $\G_t$ with the same level sets in which the positioned cone is type II (and the half-dot is type III) as follows:
 \begin{itemize}
 \item Let $f:S^1\to[0,m]$ be a narrow bump function on $\F_{1/2}(\gamma([-\delta,1]))$. If $q$ is index-$1$, let $f:=-f$.
\item Let $g_t:S^1\to S^1$ be a smooth family of automorphisms with $g_1=\id$ and $d g_t/dt=-f$.
\item Let $\G_t(x)=g_t\circ\F_t(x)$ for $x\in h^{-1}(t)$.
\end{itemize}
If $m$ is taken to be sufficiently large, then the repositioned cone is type II in $\G_t$, and the half-dot is type III.

We say that $\F_t$ is a movie of a {\emph{cone, half-dot death}}. Turning $\F_t$ upside-down yields a movie of a {\emph{cone, half-dot birth}}.

\end{move}

In Figure~\ref{fig:intbdychart}, we give a valid singularity chart for the interior and boundary singularity cancellation movies.
\begin{figure}\begin{centering}
\scalebox{0.8}{
\begingroup%
  \makeatletter%
  \providecommand\color[2][]{%
    \errmessage{(Inkscape) Color is used for the text in Inkscape, but the package 'color.sty' is not loaded}%
    \renewcommand\color[2][]{}%
  }%
  \providecommand\transparent[1]{%
    \errmessage{(Inkscape) Transparency is used (non-zero) for the text in Inkscape, but the package 'transparent.sty' is not loaded}%
    \renewcommand\transparent[1]{}%
  }%
  \providecommand\rotatebox[2]{#2}%
  \newcommand*\fsize{\dimexpr\f@size pt\relax}%
  \newcommand*\lineheight[1]{\fontsize{\fsize}{#1\fsize}\selectfont}%
  \ifx\svgwidth\undefined%
    \setlength{\unitlength}{360.38132074bp}%
    \ifx\svgscale\undefined%
      \relax%
    \else%
      \setlength{\unitlength}{\unitlength * \real{\svgscale}}%
    \fi%
  \else%
    \setlength{\unitlength}{\svgwidth}%
  \fi%
  \global\let\svgwidth\undefined%
  \global\let\svgscale\undefined%
  \makeatother%
  \begin{picture}(1,0.29725418)%
    \lineheight{1}%
    \setlength\tabcolsep{0pt}%
    \put(0,0){\includegraphics[width=\unitlength,page=1]{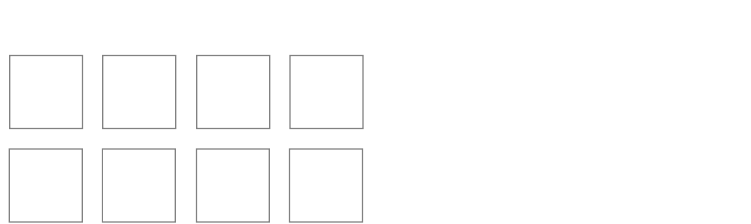}}%
    \put(0.12357666,0.27235126){\color[rgb]{0,0,0}\makebox(0,0)[lt]{\lineheight{0}\smash{\begin{tabular}[t]{l}cone, half-cone\end{tabular}}}}%
    \put(0,0){\includegraphics[width=\unitlength,page=2]{Fig26.pdf}}%
    \put(0.63227565,0.27158393){\color[rgb]{0,0,0}\makebox(0,0)[lt]{\lineheight{0}\smash{\begin{tabular}[t]{l}cone, half-dot\end{tabular}}}}%
    \put(0,0){\includegraphics[width=\unitlength,page=3]{Fig26.pdf}}%
    \put(0.01315727,0.17102218){\color[rgb]{0,0,0}\makebox(0,0)[lt]{\lineheight{0}\smash{\begin{tabular}[t]{l}II\end{tabular}}}}%
    \put(0,0){\includegraphics[width=\unitlength,page=4]{Fig26.pdf}}%
    \put(0.01892499,0.03165347){\color[rgb]{0,0,0}\makebox(0,0)[lt]{\lineheight{0}\smash{\begin{tabular}[t]{l}I\end{tabular}}}}%
    \put(0,0){\includegraphics[width=\unitlength,page=5]{Fig26.pdf}}%
  \end{picture}%
\endgroup%
}
\caption[Valid singularity charts for the cone, half-cone death/birth and cone, half-dot death/birth.]{Top row: valid singularity charts for the cone, half-cone and cone, half-dot deaths. Bottom row: valid singularity charts for the cone, half-cone and cone, half-dot births. We omit the arc of the half-cone or half-dot which is not cancelled or born during the movie.}
\label{fig:intbdychart}
\end{centering}\end{figure}

\subsection{Movies on $B^3\times I\setminus\nu($saddle, minimum, or maximum disk$)$}

Now we consider slightly more interesting $4$-manifolds. These movies are defined on $Z^4\cong (B^3\times I)\setminus($surface with a single singular cross-section$)$. Most of the interesting information of these singular movies is determined by the topology of $Z^4$, while (very informally) $\F_t$ does not change much with $t$.

\begin{move}[Saddle movie]\label{saddlemove}
\begin{figure}\begin{centering}
\scalebox{0.9}{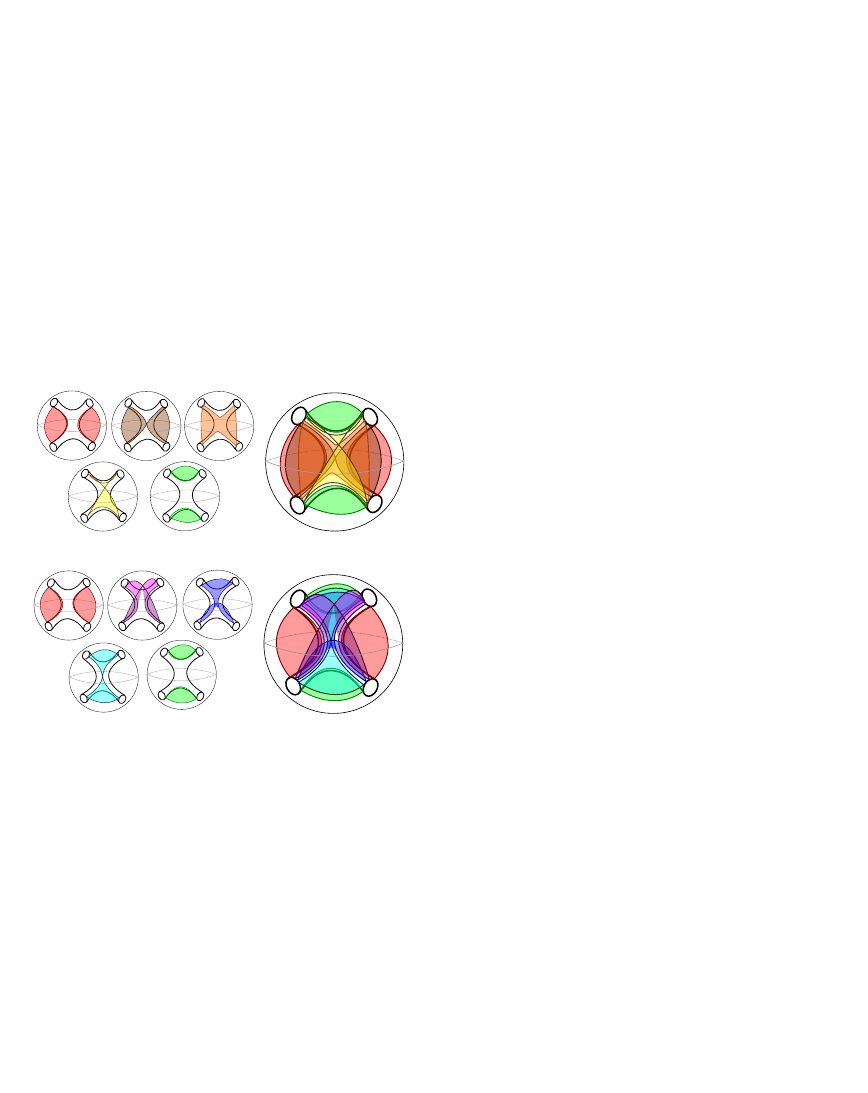}
\caption[Diagram of a saddle movie.]{Diagram of a saddle movie. This is a movie of singular fibrations on $(B^3\times I)\setminus\nu($saddle$)$.}
\label{fig:saddlemove}
\end{centering}\end{figure}

Consult Figure~\ref{fig:saddlemove}.
Let $h:B^3\times I\to I$ be projection onto the second factor. Let $R\subset B^3\times I$ be a saddle. That is, $R$ is a properly embedded disk with $R\cap(B^3\times 0)\cong R\cap(B^3\times 1)\cong\{2$ arcs$\}$ and $h|_R$ is Morse with single index-$1$ critical point and no index-$0$ or -$2$ critical points.

Let $Z^4=(B^3\times I)\setminus\nu(R)$ and restrict $h$ to $Z^4$. There are two values of $t$ for which $h^{-1}(t)$ is a singular $3$-manifold; say these values are $t=3/4$ and $t=1/4$. We define a movie of singular fibrations $\F_t:h^{-1}(t)\to S^1$ on $Z^4$ as follows:
\begin{itemize}
\item Figure~\ref{fig:saddlemove} (top left): Let $\F_1$ be as indicated. There are two half-cone singularities of $\F_1$ in $\boundary h^{-1}(1)$, which are both far from $\boundary\nu(R)$. The nonsingular leaves of $\F_1$ are either: a single disk, a disjoint union of two disks, or an annulus. There is one cone singularity of $\F_1$ in the interior of $h^{-1}(1)$ corresponding to the compression of the annular leaves. There are no dot singularities.
\item  Figure~\ref{fig:saddlemove} (top right): At $t=3/4$, $h^{-1}(3/4)$ is a singular $3$-manifold. 
\item Figure~\ref{fig:saddlemove} (middle left): For $1/4<t<3/4$, $h^{-1}(t)$ is a solid of genus-$3$. $\F_t$ has no singularities in the interior of $h^{-1}(t)$. (The cone leaf at $t>3/4$ becomes a half-cone meeting $\boundary\nu(R)$.) There are four half-cone singularities of $\F_t$ in $\boundary h^{-1}(t)$, two of which lie on $\boundary\nu(R)$.
\item Figure~\ref{fig:saddlemove} (middle right): Again, $h^{-1}(1/4)$ is a singular $3$-manifold. Note $h^{-1}(1/4)\cong h^{-1}(3/4)$. Under a rotation, the singular fibration $\F_{1/4}$ is identical to the singular fibration $\F_{3/4}$.
\item Figure~\ref{fig:saddlemove} (bottom left): As $t$ decreases to $0$, we take $\F_t=\F_{1-t}$ under a rotation so that the movies $\F_t$ ($t$ decreasing from $1/4$ to $0$) and $\F_t$ ($t$ increasing from $3/4$ to $1$) are identical (under a rotation). This results in a singular fibration $\F_0$ with two half-cones on $\boundary h^{-1}(0)$ and one cone in the interior of $h^{-1}(0)$.
\end{itemize}

We call $\mathcal{F}_t$ a {\emph{saddle movie}}.
\end{move}

Note that the cone singularities in $\F_0$ and $\F_1$ are contained in different $\F_t^{-1}(\theta)$'s (different values of $\theta$).

\begin{move}[Minimum/maximum movie]\label{minmove}
Consult Figure~\ref{fig:minmove}.
Let $h:B^3\times I\to I$ be projection onto the second factor. Let $R\subset B^3\times I$ be a disk with unknotted boundary in $B^3\times 1$, so that ${h|_R}$ is Morse with one critical point (necessarily of index $0$). Note $R\cap(B^3\times 0)=\emptyset$. 

Let $Z^4=(B^3\times I)\setminus\nu(R)$ and restrict $h$ to $Z^4$. There are two values of $t$ for which $h^{-1}(t)$ is a singular $3$-manifold; say these values are $t=5/6$ and $t=1/6$. We define a movie of singular fibrations $\F_t:h^{-1}(t)\to S^1$ on $Z^4$ as follows:
\begin{itemize}
\item Figure~\ref{fig:minmove} (top left): Let $\F_1$ be as indicated. There are two dot singularities in the foliation $\F_1$ induces on $\boundary h^{-1}(1)$, which both lie in $\boundary B^3$. The nonsingular leaves of $\F_1$ are disks and annuli. $\F_1$ has no singularities in the interior of $h^{-1}(1)$.
\item  Figure~\ref{fig:minmove} (top right): As $t$ decreases to $5/6$, play the fourth boundary compression movie.
\item Figure~\ref{fig:minmove} (left, second row): There are two half-dot singularities of $\F_{2/3}$ on the now spherical boundary component.
\item Figure~\ref{fig:minmove} (right, second row): As $t$ decreases to $1/2$, Play an interior $0$-,$1$- stabilization and a $2$-,$3$- stabilization near the half-dots. (Do the $0$-,$1$- stabilization near the index-$0$ half-dot.)
\item Figure~\ref{fig:minmove} (left, third row): As $t$ decreases to $1/3$, play a cone, half-dot death movie near each half-dot. There are two bowl singularities in $\F_{1/3}$.
\item Figure~\ref{fig:minmove} (right, third row): As $t$ decreases, play the first boundary death movie, with $h^{-1}(1/6)$ being singular.
\item Figure~\ref{fig:minmove} (bottom left): For $1/6>t>0$, we have $h^{-1}(t)= B^3$. There are two dot singularities of $\F_t$ in the interior of $B^3$.
\end{itemize}

We call $\F_t$ a {\emph{minimum movie}}
\begin{figure}\begin{centering}
\scalebox{0.9}{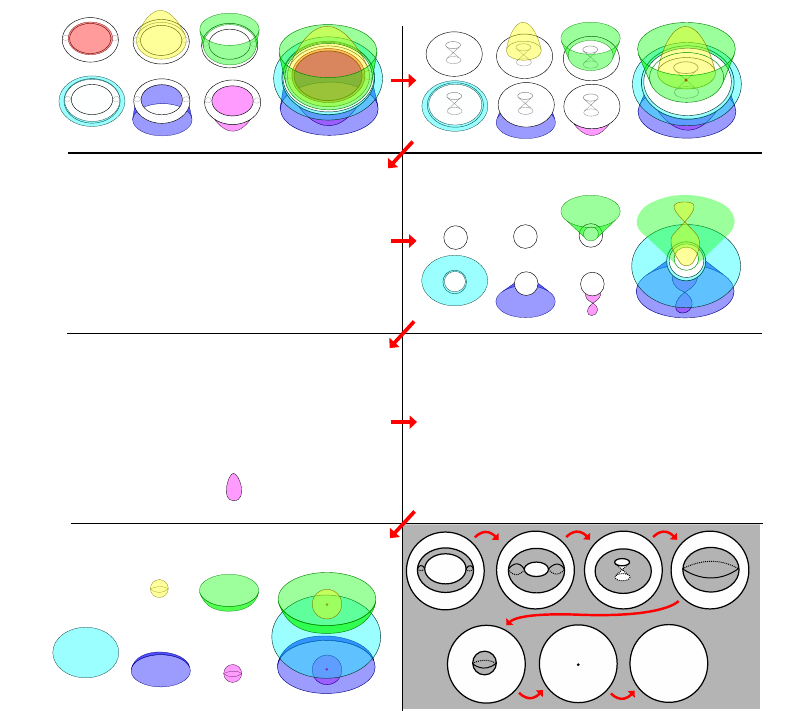}
\caption[Diagram of a minimum movie.]{Diagram of a minimum movie. This is a movie of singular fibrations on $(B^3\times I)\setminus\nu($disk with one minimum$)$.}
\label{fig:minmove}
\end{centering}\end{figure}

Let $\bar{h}:Z^4\to I$ be given by $\bar{h}(z)=1-h(z)$. Let $\G_t=\F_{1-t}$. Now $Z^4=B^3\times I\setminus\nu($disk with one maximum with respect to $\bar{h})$. With respect to $\bar{h}$, we call $\G_t$ a {\emph{maximum movie}}.

\end{move}

In Figure~\ref{fig:complementchart}, we give a valid singularity chart for the saddle, minimum, and maximum movies.
\begin{figure}\begin{centering}
\begingroup%
  \makeatletter%
  \providecommand\color[2][]{%
    \errmessage{(Inkscape) Color is used for the text in Inkscape, but the package 'color.sty' is not loaded}%
    \renewcommand\color[2][]{}%
  }%
  \providecommand\transparent[1]{%
    \errmessage{(Inkscape) Transparency is used (non-zero) for the text in Inkscape, but the package 'transparent.sty' is not loaded}%
    \renewcommand\transparent[1]{}%
  }%
  \providecommand\rotatebox[2]{#2}%
  \newcommand*\fsize{\dimexpr\f@size pt\relax}%
  \newcommand*\lineheight[1]{\fontsize{\fsize}{#1\fsize}\selectfont}%
  \ifx\svgwidth\undefined%
    \setlength{\unitlength}{146.83094535bp}%
    \ifx\svgscale\undefined%
      \relax%
    \else%
      \setlength{\unitlength}{\unitlength * \real{\svgscale}}%
    \fi%
  \else%
    \setlength{\unitlength}{\svgwidth}%
  \fi%
  \global\let\svgwidth\undefined%
  \global\let\svgscale\undefined%
  \makeatother%
  \begin{picture}(1,1.25922663)%
    \lineheight{1}%
    \setlength\tabcolsep{0pt}%
    \put(0,0){\includegraphics[width=\unitlength,page=1]{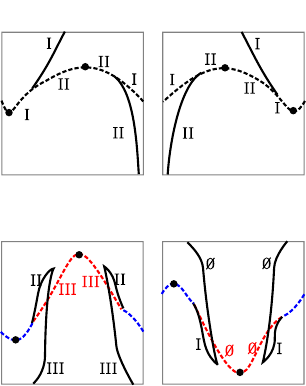}}%
    \put(0.37295971,1.20676206){\color[rgb]{0,0,0}\makebox(0,0)[lt]{\lineheight{0}\smash{\begin{tabular}[t]{l}Saddle\end{tabular}}}}%
    \put(0.06603896,0.50572712){\color[rgb]{0,0,0}\makebox(0,0)[lt]{\lineheight{0}\smash{\begin{tabular}[t]{l}Minimum\end{tabular}}}}%
    \put(0.58222381,0.50572712){\color[rgb]{0,0,0}\makebox(0,0)[lt]{\lineheight{0}\smash{\begin{tabular}[t]{l}Maximum\end{tabular}}}}%
    \put(0,0){\includegraphics[width=\unitlength,page=2]{Fig29.pdf}}%
  \end{picture}%
\endgroup%

\caption[Singularity charts for the saddle, minimum, and maximum movies.]{Top row: two singularity charts for the saddle movies (the difference is the indices of the singular points). Bottom left: a chart for the minimum movie. Bottom right: a chart for the maximum movie. The colors are just meant to help the reader distinguish intersecting arcs. We omit arcs corresponding to singularities which persist from $t=1$ to $t=0$.}
\label{fig:complementchart}
\end{centering}\end{figure}

\subsection{Composite movies}\label{composite}

In this subsection, we will combine previous movies to define more complicated movies of singular fibrations. 

\begin{move}[Simple cancellation movie]\label{canceldot}

Here, we combine the positioning movies with the interior destabilization movie to cancel a cone and dot singularity, when the cone is the nearest singularity of $\F_1$ to the dot.

Let $Z^4=M^3\times I$, and $h:M^3\times I$ be projection onto the second factor. Let $\F_1$ be a singular fibration of $M^3\times 1$ including a cone singularity $q$ and a dot singularity $p$, of cancelling indices.

Suppose there exists an arc $\gamma:[0,1]\to M^3$ which is transverse to the nonsingular leaves of $\F_1$ so that $\gamma(0)=q$ and $\gamma(1)=p$. Let $L$ be a nonsingular leaf near $q$ so that $\nu(q)\cap L$ has two components. Assume that these two components are in {\emph{distinct}} components of $L$. {\bf{Assume moreover that $\gamma((0,1))$ does not meet any singular leaf {\emph{components}} of $\F_1$}}. 

From $t=1$ to $t=1/2$, position the dot $p$ along $\gamma$ so that $\gamma((0,1))$ intersects no singular leaves of $\F_{1/2}$. Now from $t=1/2$ to $t=0$, cancel the singularities (this is an interior destabilization movie). We call the composite $\mathcal{F}_t$ a {\emph{simple cancellation movie}}.

In order to cancel the singularities, it must be the case that the two components of $L\cap\nu(P)$ are not in the same component of $L$. See Figure~\ref{fig:reeb} for an illustration.

\begin{figure}\begin{centering}
\begingroup%
  \makeatletter%
  \providecommand\color[2][]{%
    \errmessage{(Inkscape) Color is used for the text in Inkscape, but the package 'color.sty' is not loaded}%
    \renewcommand\color[2][]{}%
  }%
  \providecommand\transparent[1]{%
    \errmessage{(Inkscape) Transparency is used (non-zero) for the text in Inkscape, but the package 'transparent.sty' is not loaded}%
    \renewcommand\transparent[1]{}%
  }%
  \providecommand\rotatebox[2]{#2}%
  \newcommand*\fsize{\dimexpr\f@size pt\relax}%
  \newcommand*\lineheight[1]{\fontsize{\fsize}{#1\fsize}\selectfont}%
  \ifx\svgwidth\undefined%
    \setlength{\unitlength}{282.90720542bp}%
    \ifx\svgscale\undefined%
      \relax%
    \else%
      \setlength{\unitlength}{\unitlength * \real{\svgscale}}%
    \fi%
  \else%
    \setlength{\unitlength}{\svgwidth}%
  \fi%
  \global\let\svgwidth\undefined%
  \global\let\svgscale\undefined%
  \makeatother%
  \begin{picture}(1,0.35619009)%
    \lineheight{1}%
    \setlength\tabcolsep{0pt}%
    \put(0,0){\includegraphics[width=\unitlength,page=1]{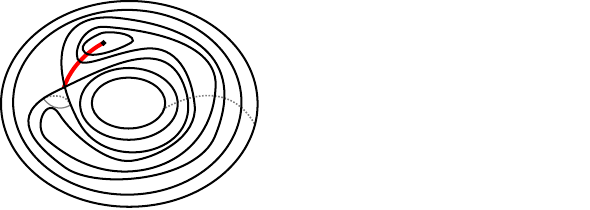}}%
    \put(0.06543344,0.23545585){\color[rgb]{1,0,0}\makebox(0,0)[lt]{\lineheight{0}\smash{\begin{tabular}[t]{l}$\gamma$\end{tabular}}}}%
    \put(0,0){\includegraphics[width=\unitlength,page=2]{Fig30.pdf}}%
  \end{picture}%
\endgroup%

\caption[Uncancellable cones and dots.]{If we attempt to cancel a dot with a cone where the two sheets of $\F_t^{-1}(\theta)$ near the cone are in the same component of $\F_t^{-1}(\theta)$, then we cannot actually destabilize the circular Morse function $\F_t$. 
Left: a local picture of a dot and cone singularity; the two sheets of a leaf near the cone are in the same component of the leaf. Right: We attempt to cancel the dot and cone singularities 
and create a Reeb component, which is not allowed in the definition of a singular fibration.}
\label{fig:reeb}
\end{centering}\end{figure}

\end{move}

\begin{move}[Generalized cancellation movie]

Here, we combine the positioning movies with the interior destabilization movie to cancel a cone and dot singularity which may initially be far apart in $\F_1$.

Let $Z^4=M^3\times I$, and $h:M^3\times I$ be projection onto the second factor. Let $\F_1$ be a singular fibration of $M^3\times 1$ including a cone singularity $q$ and a dot singularity $p$, of cancelling indices.

Suppose there exists an arc $\gamma:[0,1]\to M^3$ which is transverse to the nonsingular leaves of $\F_1$ so that $\gamma(0)=q$ and $\gamma(1)=p$. Let $L$ be a nonsingular leaf near $q$ so that $\nu(q)\cap L$ has two components. Assume that these two components are in {\emph{distinct}} components of $L$. {\bf{We allow $\gamma((0,1))$ to meet other singular leaf components of $\F_1$, but take $\gamma((0,1))$ to be far from the actual singularities.}}

Note that if $M^3$ is a connected manifold with nonempty boundary and $p$ is a dot singularity of $\F_1$, then there {\emph{must}} be a cone $q$ and arc $\gamma$ satisfying the above setup.

Extend $\gamma$ to $\gamma:[-1,1]\to M^3$ so that $\F_1(\gamma(s))=\F_1(\gamma(-s))$ for all $s\in[0,1]$ and $\gamma$ is still transverse to all the leaves of $M^3$ (do this by extending $\gamma$ perpendicularly to the leaves of $\F_1$. If this terminates before extending all the way to $-1$, then the extension terminates at a dot $p'=\gamma(-s_0)$. Exchange the roles of $p$ and $p'$, and let $\gamma(s):=\gamma(-s/s_0)$.)

From $t=1$ to $t=2/3$, position the cone $q$ along $\gamma|_{(-1+\epsilon,1-\epsilon)}$ so that $q$ lies on $\gamma(1-\epsilon)$, so $p$ and $q$ are very close in $h^{-1}(2/3)$. From $t=2/3$ to $1/3$, position the dot $p$ along $\gamma((1-\epsilon,1))$ so that $\F_{1/3}(\gamma(1-\epsilon,1))$ is a very short arc. From $t=1/3$ to $t=0$, cancel the singularities (this is an interior destabilization movie). We call the composite $\mathcal{F}_t$ a {\emph{generalized cancellation movie}}.

\end{move}

\begin{remark}
If $M^3$ is a connected $3$-manifold with nonempty boundary and $\F_1$ is a singular fibration on $M^3$ with $n\ge 1$ dot singularities and $r$ cone singularities, then the generalized cancellation movie allows us to extend $\F_1$ to a valid movie $\F_t$ on $M^3\times I$ so that $\F_0$ has $n-1$ dots and $r-1$ cones. The types of the singularities in $\F_t$ may be chosen freely, except that the cancelled pair of singularities must have cancelling types (\0-,I- or II-,III-).

In particular, if $\F_1$ is a singular fibration on a connected $3$-manifold $M^3$ with nonempty boundary and $\F_1$ has $n$ dot and $n$ cone singularities that are all of types II or III (or all of types \0 or I), then $\F_1$ may be extended to a valid movie $\F_t$ on $M^3\times I$ so that $\F_0$ is nonsingular.
\end{remark}

Now we define our last two movies: the band and disk movies. These two movies are more complicated than previous ones, but viewing them as composites allows us to better understand them.

\begin{move}[Band movie]\label{bandmove}
As in the saddle movie, let $h:B^3\times I\to I$ be projection onto the second factor. Let $R\subset B^3\times I$ be a saddle. That is, $R$ is a properly embedded disk with $R\cap(B^3\times 0)\cong R\cap(B^3\times 1)\cong(2$ arcs$)$ and $h|_R$ is Morse with single index-$1$ critical point and no index-$0$ or -$2$ critical points. Let $b\cong I\times [-\epsilon,\epsilon]$ be a band projected to $B^3\times 1$ with $S^0\times[-\epsilon,\epsilon]\subset\boundary b$ in $\boundary (B^3\setminus\nu(R))$ so that the band $b$ describes the saddle of $R$. (That is, $b$ is obtained by perturbing $R$ so the saddle is degenerate and then projecting a neighborhood of the degeneracy to $B^3\times1$.) Let $\eta=I\times \pt\subset b$ be a core arc of $b$. 

Restrict $h$ to $(B^3\times I)\setminus\nu(R)$. Let $\G_1$ be any singular fibration on $h^{-1}(1)$ with the following properties.
\begin{itemize}
\item Each leaf of $\G_1$ intersects $(\boundary\nu(R))\setminus\boundary B^3$ in arcs.
\item Each leaf of $\G_1$ intersects $b=I\times[-\epsilon,\epsilon]$ in copies of $\pt\times[-\epsilon,\epsilon]$.
\item Each intersection of a leaf of $\G_1$ with the interior of $b$ is transverse.
\end{itemize}

Now consult Figure~\ref{fig:bandmove}. We describe each frame, the arrows indicating decreasing $t$:
\begin{itemize}
\item Top left: A neighborhood $B$ of $\eta(1)$ in $h^{-1}(1)$. 
Play an interior $1$-,$2$-stabilization movie in $B$, during the time span $t=1$ to $1-\epsilon$. Let $q$ be the cone point of the cone oriented horizontally (opening parallel to the band $b$); take $q$ to be type I.
\item Top right: 
Take $\gamma:[-1,1]\to B^3$ to be an arc in $\mathring{h^{-1}(\epsilon)}$ with $\gamma(-1)$ close to $\eta(0)$, $\gamma(1)$ close to $\eta(1)$,and $\gamma(0)=q$. The arc $\gamma$ is parallel to the band away from $\eta(1)$, and then near $\eta(1)$ spirals around the end of the band so that we may take $\gamma(s)=\gamma(-s)$ for all $s$. 
\item Middle left: Position the cone $q$ along the arc $\gamma$. This is a picture of a singular fibration partway through this positioning movie.
\item Middle right: Isotope $(B^3\times I)\setminus\nu(R)$ to shrink the band $b$. We could do this isotopy strictly after the positioning movie, but we believe it is easier to visualize at this step.
\item Bottom left: We have finished positioning the cone. Say $t=\epsilon$. There is a set $B'$ containing $\eta(0)$ in $h^{-1}(\epsilon)$ so that in $B'$, $\G_\epsilon$ agrees with $\F_1$ of the saddle movie. Play the saddle movie in the time span $t=\epsilon$ to $0$.
\item Bottom right: We obtain a singular fibration $\G_0$ of $h^{-1}(0)$. Note $\G_0$ has exactly two singularities interior to $h^{-1}(0)$ (both are type II cones).
\end{itemize}

\begin{figure}\begin{centering}
\scalebox{0.9}{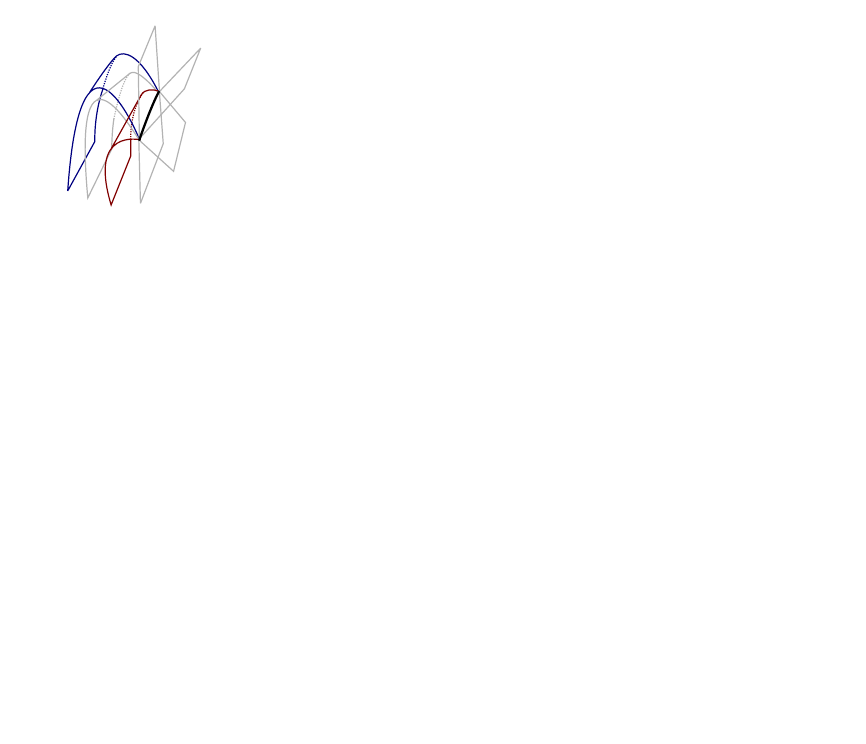}
\caption[Some leaves in a band movie.]{Some leaves of $\G_t$ (a band movie) near a neighborhood of band $b$. Outside this neighborhood, $\G_t=\G_1$ for all $t$.}
\label{fig:bandmove}
\end{centering}\end{figure}

We $\mathcal{G}_t$ a {\emph{band movie}}. This movie is comparitively complicated, so we include some extra cartoons of leaves in $\G_t$ for decreasing $t$. See Figures~\ref{fig:bandmoveoneleaf} and~\ref{fig:bandmovethreeleaves}. 
In Figure~\ref{fig:bandmovesingularity}, we give a more detailed image of the cone singularities in $\G_0$.

\begin{figure}\begin{centering}
\scalebox{0.95}{\includegraphics[width=\textwidth]{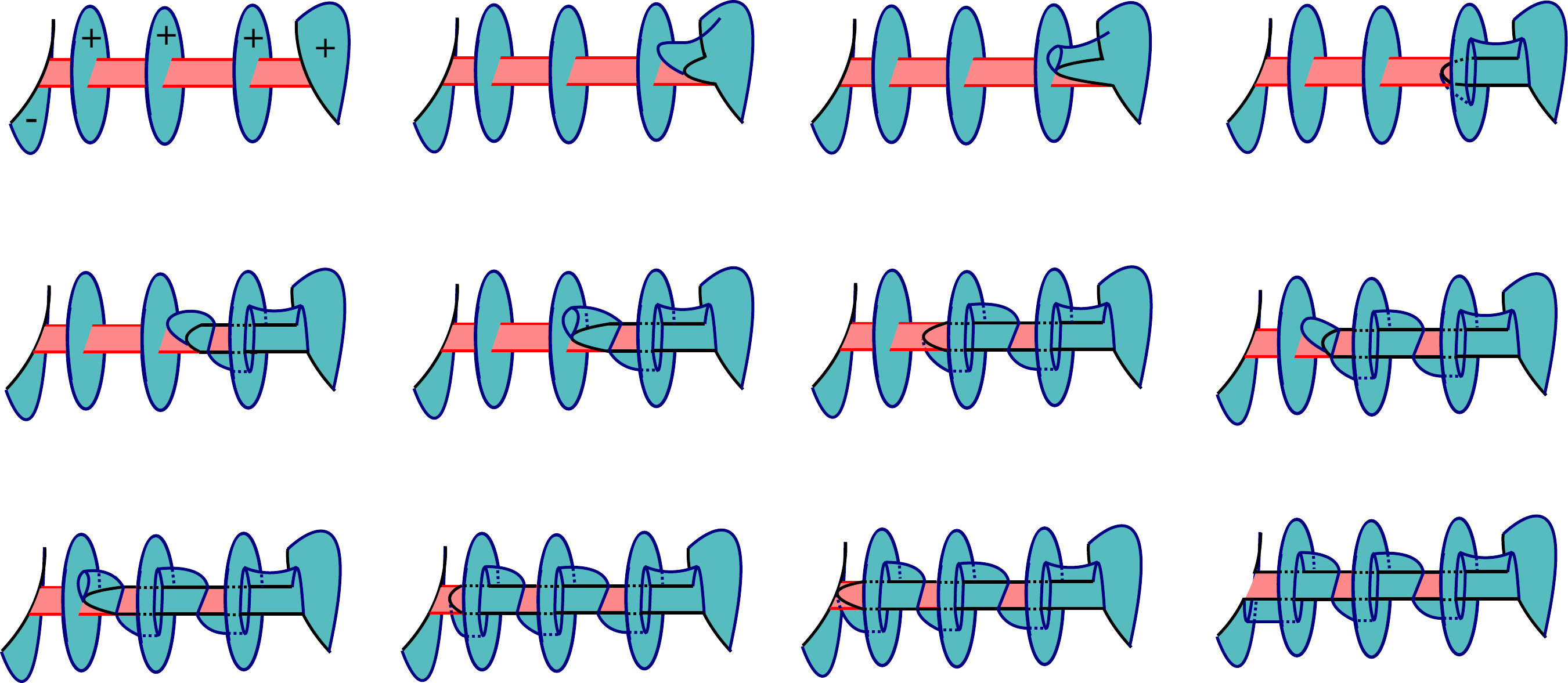}}
\caption[One leaf in a band movie.]{Left to right, top to bottom: One leaf in a band movie $\G_t$ as $t$ decreases from $1$ to $0$.}
\label{fig:bandmoveoneleaf}
\end{centering}\end{figure}

\begin{figure}\begin{centering}
\scalebox{0.95}{\includegraphics[width=\textwidth]{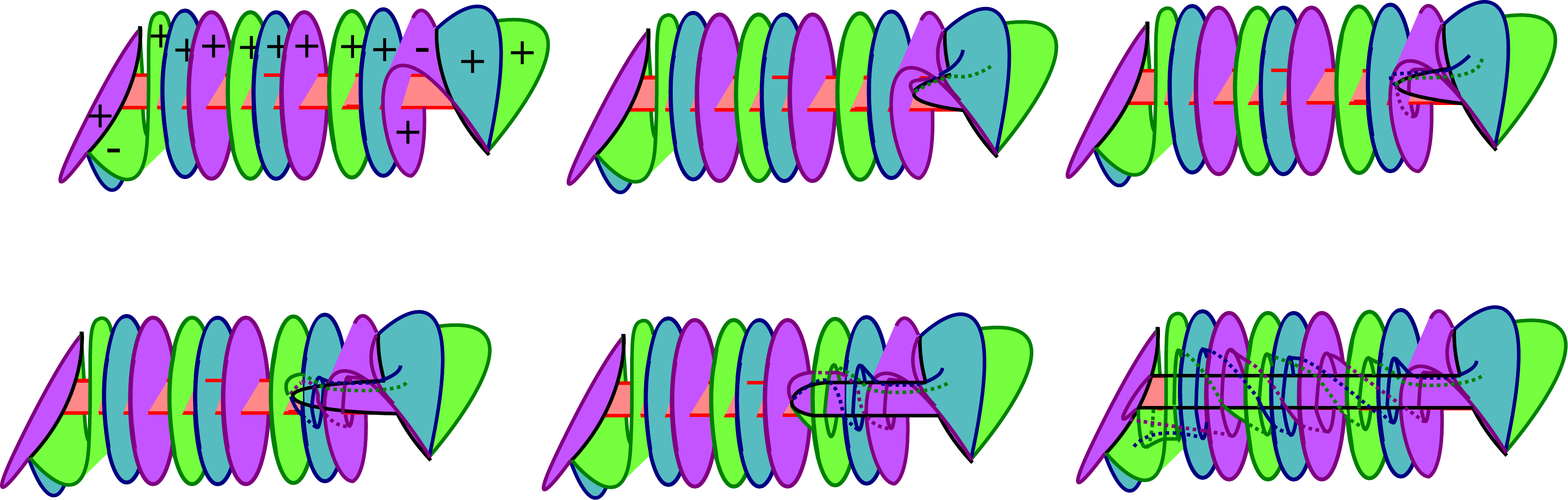}}
\caption[Three leaves in a band movie.]{Left to right, top to bottom: Three leaves in a band movie $\G_t$ as $t$ decreases from $1$ to $0$.}
\label{fig:bandmovethreeleaves}
\end{centering}\end{figure}

\begin{figure}\begin{centering}
\scalebox{0.9}{\includegraphics[width=\textwidth]{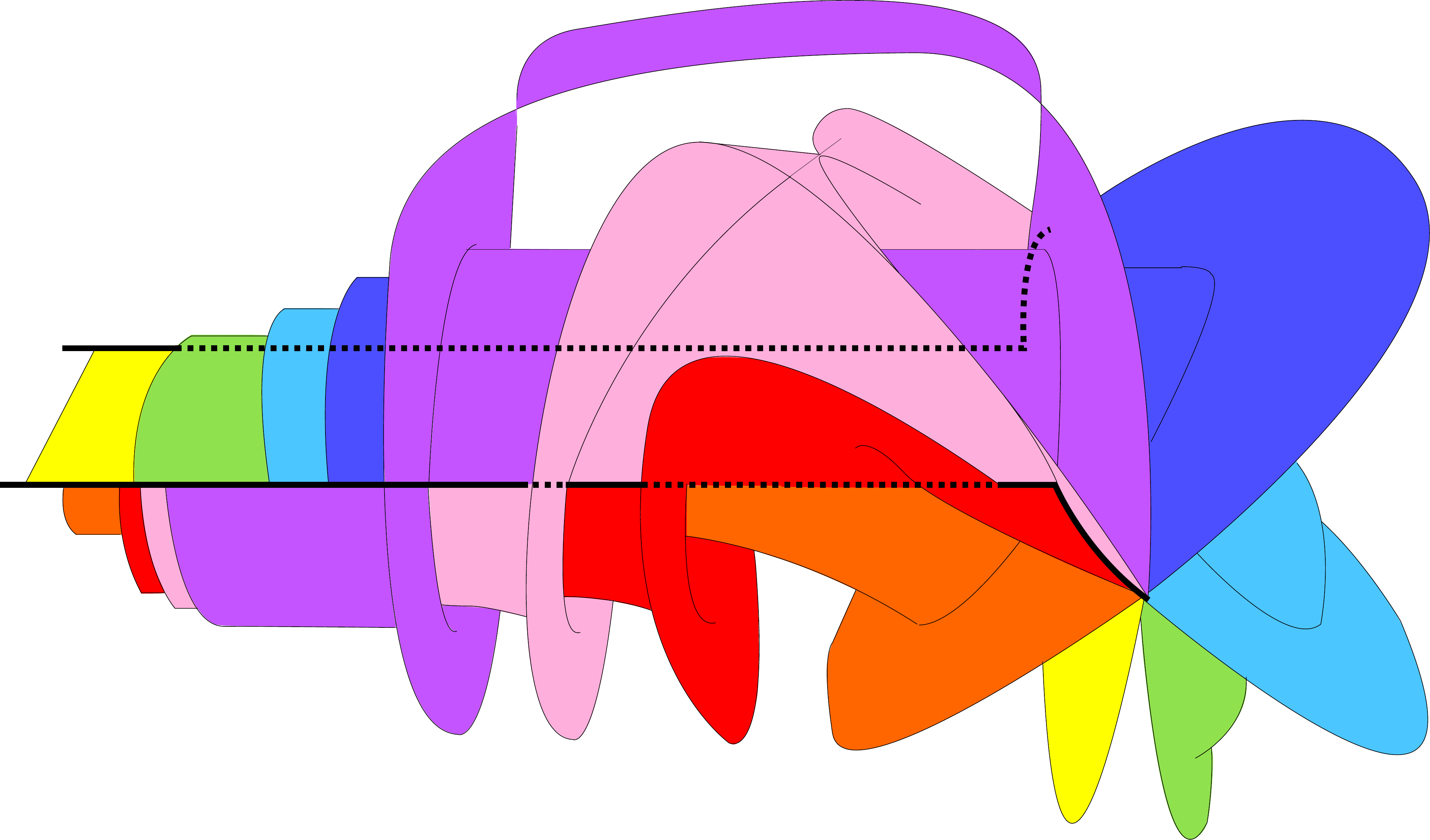}}
\caption[A detailed view of a cone singularity at the end of a band movie.]{A detailed view of a cone singularity in $\G_0$ near the end of band $b$ in a band movie. The two cones have identical neighborhoods, modulo rotation.}
\label{fig:bandmovesingularity}
\end{centering}\end{figure}

\end{move}

\begin{move}[Disk movie]\label{diskmove}

As in the minimum movie, let $h:B^3\times I\to I$ be projection onto the second factor. Let $R\subset B^3\times I$ be a disk with one minimum. That is, $R$ is a properly embedded disk with unknotted boundary in $B^3\times 1$ so that $h|_R$ is Morse with single index-$0$ critical point. Let $D$ be a disk in $B^3\times 1$ bounded by $\boundary R$. 

Restrict $h$ to $(B^3\times I)\setminus\nu(R)$. Let $\G_1$ be any singular fibration on $h^{-1}(1)$ with the following properties.
\begin{itemize}
\item All but finitely many leaves of $\G_1$ intersect $D$ transversely.
\item Where a leaf of $\G_1$ is tangent to $D$, that leaf locally looks like a minimum or maximum disk or saddle and locally intersects $D$ in a single point.
\item No singular points of $\G_1$ meet $\nu(D)$.
\item Near $\boundary \nu(R)$, the leafs of $\G_1$ intersect $D$ in circles parallel to $\boundary\nu(R)$.
\end{itemize}

By abuse of notation, take a copy of $D:=D\setminus\nu(R)$ to live in each $h^{-1}(t)$. Now consult Figure~\ref{fig:cancelminima}.

Note $\G_1$ induces a singular foliation $\F$ on $D$, whose singularities are dots $p_1,\ldots, p_k$ and crosses, for $k\ge 1$. Let $p_i\in D$ be a dot singularity in this foliation. Let $P_i$ be the set $\cup\{E\subset D$ a disk $\mid p_i\in E$, $\boundary E$ in one leaf of $\G_1, p_i$ is the unique singularity of $\F|_{\mathring{E}}\}$. Let $D_i$ be the closure of $\cup_{E\in P_i}E$. Then either $\boundary D_i$ meets a cross singularity of $\F$, or $D_i=D$. Note $D_i$ may not be a disk.

\begin{proposition}
For some choice of $i$, $D_i$ is a disk.
\end{proposition}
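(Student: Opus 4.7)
The plan is to pick a specific dot and show its $D_i$ is a disk. Since $D$ is simply connected, the restriction $\G_1|_D\colon D\to S^1$ lifts to a real-valued function $f\colon D\to \mathbb R$ whose level sets are the leaves of $\F$; the dots are the Morse extrema of $f$ and the crosses are its Morse saddles.

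First I would observe that the disks in $P_i$ are totally ordered by inclusion. The boundary of any $E\in P_i$ is a simple closed curve lying in one level set of $f$, say at value $c$. Two such boundaries at different values are disjoint closed curves in $D$, and since each bounds a sub-disk of $D$ containing the common point $p_i$, one must be nested inside the other. Hence $D_i=\overline{\bigcup_{E\in P_i} E}$ is the closure of a monotone increasing union; concretely, if $B^c$ denotes the connected component of $\{f>c\}$ containing $p_i$, then $\bigcup P_i=\bigcup_{c>c^*} B^c$, where $c^*$ is the first value met downward from $f(p_i)$ at which $\partial B^c$ contains a cross (or $-\infty$ if none occurs, in which case $D_i=D$ is already a disk).

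The choice: let $p_i$ be a dot at which $f$ attains its \emph{maximum} over all dots. I claim that for $c\in(c^*,f(p_i))$, $B^c$ is an open disk and contains no other dot. For the ``no other dot'' part, any other dot $p_j$ could enter $B^c$ only after $p_j$'s and $p_i$'s sublevel components merged, which requires crossing a saddle value---impossible while $c>c^*$. For the ``open disk'' part, I would invoke simple-connectedness of $D$: the Reeb graph of $f$ on a simply connected surface is a tree, which forbids handle-attaching saddles, so a growing basin never jumps from disk to annulus when crossing a saddle. Thus each $B^c$ with $c>c^*$ is an open sub-disk of $D$.

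Finally, $B^{c^*}=\bigcup_{c>c^*} B^c$ is a nested union of open disks, so it is a simply connected bounded open subset of $D$, hence itself an open disk; its closure $D_i$ is therefore a closed topological disk. The step I expect to be the main obstacle is the non-generic situation in which several crosses share the critical value $c^*$ and lie on the same level-set component, so that $\partial D_i$ is a complicated singular graph rather than a single figure-eight. The safeguard is that $B^{c^*}$ sits on one side of this graph (the ``above $c^*$'' side containing $p_i$) and remains the nested union of open disks described above, so the topological conclusion---closure a disk---still holds, even though $\partial D_i$ may be non-smooth and pass through several crosses.
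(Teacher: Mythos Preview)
Your approach is correct and takes a genuinely different route from the paper's. The paper runs a short descent: if $D_1$ is not a disk, its complement in $D$ has an interior component $E_1$ which (being a disk foliated by $\F$ with boundary in a leaf) must contain another dot $p_2$; then $D_2$ lives inside $E_1$, and one iterates, with finiteness of the dots forcing termination. No distinguished dot is singled out and no auxiliary structure is invoked. You instead lift $\G_1|_D$ to a real Morse function $f$, pick the dot of largest $f$-value, identify $\bigcup P_i$ with the union of superlevel components $B^c$, and use that the Reeb graph of $f$ on the simply connected disk is a tree to rule out handle-attaching saddles. That structural observation is exactly right: a saddle at which the growing basin would self-connect would force $\partial B^{c^*+\epsilon}$ to encircle a loop of the figure-eight while later shrinking to the single point $p_i$, which is impossible without an intervening critical point in $B^c$ --- and there is none. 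So your $B^{c^*}$ really is an open disk and its closure is a topological disk.

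One small gap worth flagging: you tacitly assume the dot of maximal $f$-value is a local \emph{maximum}, so that superlevel components are the right objects. If $\partial D$ sits at the global maximum of $f$ (recall the leaves near $\partial D$ are circles parallel to it) and there are no interior local maxima, the top dot is a local minimum and your $B^c$ as written is not the growing disk from $P_i$. This is easily repaired --- pass to $-f$, or simply choose the highest local maximum when one exists and treat the all-minima case by the symmetric sublevel argument --- but it should be said. What each approach buys: the paper's descent is shorter and entirely elementary; your argument is more conceptual and in fact shows rather more, since the tree Reeb graph forces \emph{every} first-encountered saddle to be a merge, suggesting that the paper's caution ``$D_i$ may not be a disk'' is essentially vacuous for a generic Morse foliation on $D$.
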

\begin{proof}
Suppose $D_1$ is not a disk. Let $E_1$ be the interior component of $D\setminus D_1$. $E_1$ is an open disk, so there is another $p_i$ in $E_1$ (reorder so $p_2\in E_1$). Now suppose $D_2$ is not a disk. Similarly let $E_2$ be the interior component of $D\setminus D_2$. Then $E_2$ is an open disk, so there is another $p_i$ in $E_2$. Note $E_2\subset E_1$, so $p_i\neq p_1$ or $p_2$ (reorder so $p_3\in E_2$).

We continue inductively. Since there are a finite number of $p_i$, eventually some $D_i$ must be a disk.
\end{proof}

So without loss of generality, assume $D_1$ is a disk.

From $t=1$ to $3/4$, play an interior $0$-,$1$- or $2$-,$3$- stabilization movie in a neighborhood of $p_1$ (choose according to Figure~\ref{fig:howtostabmin}; take the cone to be type II and the dot to be type III). Call the new cone singularity $q$; take $D$ to intersect $q$ so that $\G_{3/4}$ induces the same foliation as $\G_1$ on $D$, but one dot singularity in the foliation $\G_{3/4}|_D$ is at $q$. From $t=3/4$ to $t=1/2$, if $D\neq D_1$ then position the cone along the disk $D_1$. (If $D=D_1$, position the cone along a copy of $D_1$ shrunk slightly to lie in the interior of $h^{-1}(3/4)$.)

Now in $\G_{1/2}$, if $D_1=D$ then there is a leaf component of $\G_{1/2}$ which is a disk parallel to $D$. If $D_1\neq D$, then we can isotope $\G_{1/2}$ so $D$ is disjoint from all singularities of $\G_{1/2}$ and the foliation $\G_{1/2}$ induces on $D$ has $k-1$ dot singularities.

Repeat this process another $k-1$ times, from $t=1/2$ to $t=1/4$, so that there is a leaf component $E$ of $\G_{1/4}$ which is a disk parallel to $D$.

Then there is a subset $B\subset h^{-1}(1/4)$ containing $E$ so that $\G_{1/4}$ agrees with $\F_1$ of the minimum movie in $B$. Play the minimum movie in $B$ from $t=1/4$ to $0$.

We call the movie $\mathcal{G}_t$ a {\emph{disk movie}}.

\begin{figure}\begin{centering}
\scalebox{0.8}{
\begingroup%
  \makeatletter%
  \providecommand\color[2][]{%
    \errmessage{(Inkscape) Color is used for the text in Inkscape, but the package 'color.sty' is not loaded}%
    \renewcommand\color[2][]{}%
  }%
  \providecommand\transparent[1]{%
    \errmessage{(Inkscape) Transparency is used (non-zero) for the text in Inkscape, but the package 'transparent.sty' is not loaded}%
    \renewcommand\transparent[1]{}%
  }%
  \providecommand\rotatebox[2]{#2}%
  \newcommand*\fsize{\dimexpr\f@size pt\relax}%
  \newcommand*\lineheight[1]{\fontsize{\fsize}{#1\fsize}\selectfont}%
  \ifx\svgwidth\undefined%
    \setlength{\unitlength}{407.48580812bp}%
    \ifx\svgscale\undefined%
      \relax%
    \else%
      \setlength{\unitlength}{\unitlength * \real{\svgscale}}%
    \fi%
  \else%
    \setlength{\unitlength}{\svgwidth}%
  \fi%
  \global\let\svgwidth\undefined%
  \global\let\svgscale\undefined%
  \makeatother%
  \begin{picture}(1,1.19707296)%
    \lineheight{1}%
    \setlength\tabcolsep{0pt}%
    \put(0,0){\includegraphics[width=\unitlength,page=1]{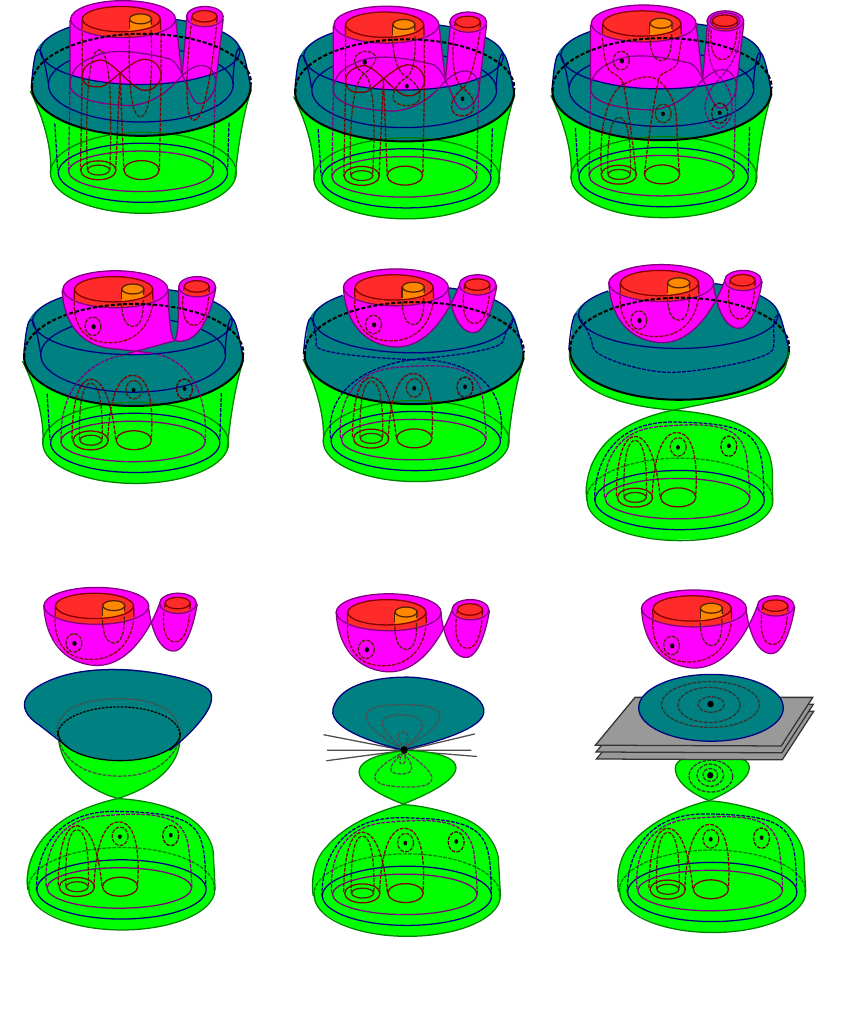}}%
    \put(0.96985516,0.78926941){\color[rgb]{0,0,0}\rotatebox{90}{\makebox(0,0)[t]{\lineheight{0}\smash{\begin{tabular}[t]{c}$G_t$  agrees with top of\\minimum movie near the disk\end{tabular}}}}}%
    \put(0,0){\includegraphics[width=\unitlength,page=2]{Fig35.pdf}}%
    \put(0.35975512,0.0005267){\color[rgb]{0,0,0}\makebox(0,0)[lt]{\lineheight{0}\smash{\begin{tabular}[t]{l}Minimum movie\end{tabular}}}}%
  \end{picture}%
\endgroup%
}
\caption[Some leaves in a disk movie.]{Left to right, top to bottom: Some leaves of $\G_t$ as $t$ decreases from $1$ to $0$ in a disk movie. As $t$ decreases, we 
perform interior stabilizations and position the cones along disks until in a set $B$ containing $\boundary D$ in $h^{-1}(1/4)$, $\G_{1/4}$ agrees with the top of the minimum movie. We play the minimum movie from $t=1/4$ to $0$.}
\label{fig:cancelminima}
\end{centering}\end{figure}

\begin{figure}\begin{centering}
\begingroup%
  \makeatletter%
  \providecommand\color[2][]{%
    \errmessage{(Inkscape) Color is used for the text in Inkscape, but the package 'color.sty' is not loaded}%
    \renewcommand\color[2][]{}%
  }%
  \providecommand\transparent[1]{%
    \errmessage{(Inkscape) Transparency is used (non-zero) for the text in Inkscape, but the package 'transparent.sty' is not loaded}%
    \renewcommand\transparent[1]{}%
  }%
  \providecommand\rotatebox[2]{#2}%
  \newcommand*\fsize{\dimexpr\f@size pt\relax}%
  \newcommand*\lineheight[1]{\fontsize{\fsize}{#1\fsize}\selectfont}%
  \ifx\svgwidth\undefined%
    \setlength{\unitlength}{327.15818871bp}%
    \ifx\svgscale\undefined%
      \relax%
    \else%
      \setlength{\unitlength}{\unitlength * \real{\svgscale}}%
    \fi%
  \else%
    \setlength{\unitlength}{\svgwidth}%
  \fi%
  \global\let\svgwidth\undefined%
  \global\let\svgscale\undefined%
  \makeatother%
  \begin{picture}(1,0.59187647)%
    \lineheight{1}%
    \setlength\tabcolsep{0pt}%
    \put(0,0){\includegraphics[width=\unitlength,page=1]{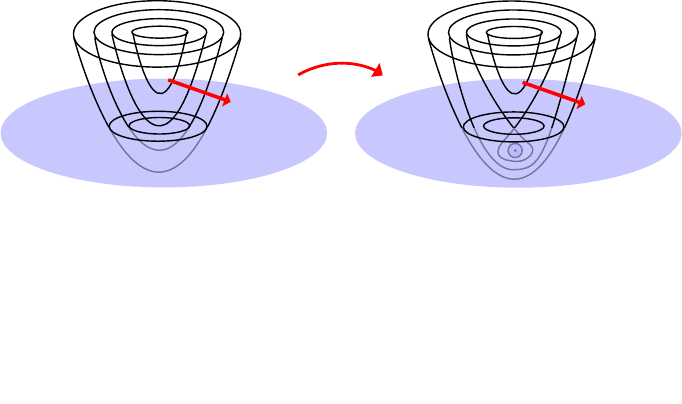}}%
    \put(0.40646512,0.53754252){\color[rgb]{0,0,0}\makebox(0,0)[lt]{\lineheight{0}\smash{\begin{tabular}[t]{c}0-,1-\\stabilization\end{tabular}}}}%
    \put(0,0){\includegraphics[width=\unitlength,page=2]{Fig36.pdf}}%
    \put(0.40673433,0.2210594){\color[rgb]{0,0,0}\makebox(0,0)[lt]{\lineheight{0}\smash{\begin{tabular}[t]{c}2-,3-\\stabilization\end{tabular}}}}%
  \end{picture}%
\endgroup%

\caption[Closeup of a dot singularity introduced during a disk movie.]{Leaves near a dot in the singularity $\G_t$ induces on $D$. The red arrow indicates the positive $S^1$ direction of $\G_t$. We stabilize, and choose the indices of the stabilization as indicated.}
\label{fig:howtostabmin}
\end{centering}\end{figure}

There are exactly $k$ more cone and $k+2$ more dot singularities in $\G_0$ than in $\G_1$, all of which are types II or III.
\end{move}

In Figure~\ref{fig:compositechart}, we give valid singularity charts for the simple and generalized cancellation movies, and the band and disk movies.
\begin{figure}\begin{centering}
\scalebox{0.95}{
\begingroup%
  \makeatletter%
  \providecommand\color[2][]{%
    \errmessage{(Inkscape) Color is used for the text in Inkscape, but the package 'color.sty' is not loaded}%
    \renewcommand\color[2][]{}%
  }%
  \providecommand\transparent[1]{%
    \errmessage{(Inkscape) Transparency is used (non-zero) for the text in Inkscape, but the package 'transparent.sty' is not loaded}%
    \renewcommand\transparent[1]{}%
  }%
  \providecommand\rotatebox[2]{#2}%
  \newcommand*\fsize{\dimexpr\f@size pt\relax}%
  \newcommand*\lineheight[1]{\fontsize{\fsize}{#1\fsize}\selectfont}%
  \ifx\svgwidth\undefined%
    \setlength{\unitlength}{368.87634758bp}%
    \ifx\svgscale\undefined%
      \relax%
    \else%
      \setlength{\unitlength}{\unitlength * \real{\svgscale}}%
    \fi%
  \else%
    \setlength{\unitlength}{\svgwidth}%
  \fi%
  \global\let\svgwidth\undefined%
  \global\let\svgscale\undefined%
  \makeatother%
  \begin{picture}(1,0.50305149)%
    \lineheight{1}%
    \setlength\tabcolsep{0pt}%
    \put(0,0){\includegraphics[width=\unitlength,page=1]{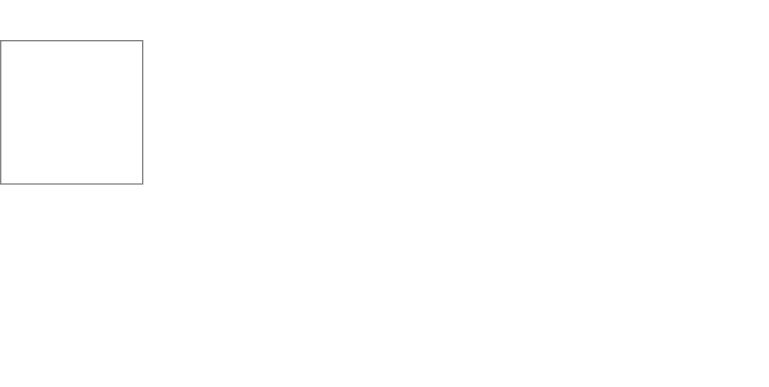}}%
    \put(0.05433059,0.46771728){\color[rgb]{0,0,0}\makebox(0,0)[lt]{\lineheight{0}\smash{\begin{tabular}[t]{l}Simple Cancellation\end{tabular}}}}%
    \put(0,0){\includegraphics[width=\unitlength,page=2]{Fig37.pdf}}%
    \put(0.84098126,0.47846953){\color[rgb]{0,0,0}\makebox(0,0)[lt]{\lineheight{0}\smash{\begin{tabular}[t]{l}Band\end{tabular}}}}%
    \put(0.55336968,0.41626916){\color[rgb]{0,0,0}\makebox(0,0)[lt]{\lineheight{0}\smash{\begin{tabular}[t]{l}Disk\end{tabular}}}}%
    \put(0,0){\includegraphics[width=\unitlength,page=3]{Fig37.pdf}}%
    \put(0.03169464,0.21211068){\color[rgb]{0,0,0}\makebox(0,0)[lt]{\lineheight{0}\smash{\begin{tabular}[t]{l}Generalized Cancellation\end{tabular}}}}%
    \put(0,0){\includegraphics[width=\unitlength,page=4]{Fig37.pdf}}%
  \end{picture}%
\endgroup%
}
\caption[Singularity charts for cancellation movies.]{Top left: two singularity charts for the simplified cancellation movie, depending on the indices of the cancelled singularities. Bottom left: two singularity charts for the generalized cancellation movie. Middle: A potential chart for the disk movie, where we included one $0$,$1$- stabilization and one $2$-,$3$-stabilization (recall these choices depend on orientations of leaves of $\F_1$ intersecting the disk). Right: two singularity charts for the band movie (recall the types of the cones created in the $1$-$2$- stabilization depends on the orientation of $\F$.}
\label{fig:compositechart}
\end{centering}\end{figure}

\subsection{A key lemma}\label{sec:usemovie}

In this section, we prove the following lemma.

\begin{lemma}\label{usemoviealt}
Let $Z^4$ be a compact $4$-manifold. Fix a Morse function $h:Z^4\to I$. Suppose $\F_t$ is a movie of singular fibrations on $Z^4$ (i.e. a family of smooth maps $\F_t:h^{-1}(t)\to S^1$) so that $\F_1$ and $\F_0$ are fibrations (with no singularities) and $\F_t$ is a concatenation of band, disk, and simple or generalized cancellation movies (to be defined in section~\ref{sec:blocks}) in order (decreasing $t$).

Then $\F_1\sqcup\F_0$ extends to a fibration on $Z^4$.
\end{lemma}

Our strategy for proving Theorem~\ref{maintheorem} is to construct a movie of singular fibrations on 
$B^4\setminus\nu(D)$ from $S^3$ to just above the lowest minimum of $D$. We choose this movie so that the top singular fibration on $S^3\setminus\nu(K)$ is the honest fibration of the fibered knot $K$, and that the bottom singular fibration is the fibration of a solid torus by meridian disks. 
Lemma~\ref{usemoviealt} ensures that this movie of singular fibrations actually yields a fibration of $B^4\setminus\nu(D)$. By~\cite{jeff}, this immediately completes the proof of Theorem~\ref{maintheorem} (i.e.~\cite{jeff} implies that the fibers are handlebodies), but we will separately analyze the constructed movie to directly show that the fibers of $B^4\setminus\nu(D)$ are handlebodies and even understand how they are embedded in $B^4$ (see Section~\ref{sec:conclusion}).

\begin{proof}
Let $\F_t$ be a singular movie on $Z^4$ with Morse function $h:Z^4\to I$ so that $\F_1$ and $\F_0$ are nonsingular. Assume $\F_t$ is built by composing some number of band, disk, and simple or generalized cancellation movies, in that order (decreasing $t$).

By Theorem~\ref{fibrationthm}, to conclude that $Z^4$ is fibered it is sufficient to show that $\F_t$ is valid (perhaps up to reparametrization).
In subsection~\ref{composite}, we found singular charts for the band, disk, and generalized cancellation movies.

In particular, the band movie admits a chart in which the two cones at the bottom are type II (opposite indices), and assumes nothing about existing cones and dots.

The disk movie admits a chart in which the cones and dots at the bottom are all type II or III, and assumes nothing about existing cones and dots.

The cancellation movies each admit a chart in which a type II and type III singularity die (the slopes of the corresponding arcs may be positive or negative), and all other arcs are nearly vertical (of arbitrarily-signed slope).

Thus, by stacking these charts 
we obtain a valid singularity chart for $\F_t$. We give two sample singularity charts of $\F_t$ in Figures~\ref{fig:fullchart1nofiber} and~\ref{fig:fullchart2nofiber}.
\end{proof}

\begin{figure}\begin{centering}
\scalebox{0.9}{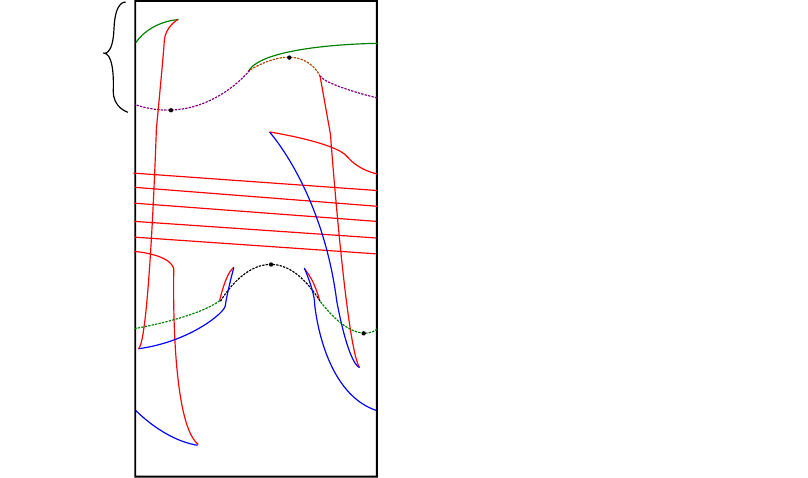}
\caption[An example valid singularity chart.]{A possible valid singularity chart for $\F_t$ when $\F_t$ is a composition of one band movie, one disk movie, and then three cancellation movies. Left: All cancellations are simple. Right: The cancellations are generalized.}
\label{fig:fullchart1nofiber}
\end{centering}\end{figure}

\begin{figure}\begin{centering}
\scalebox{0.9}{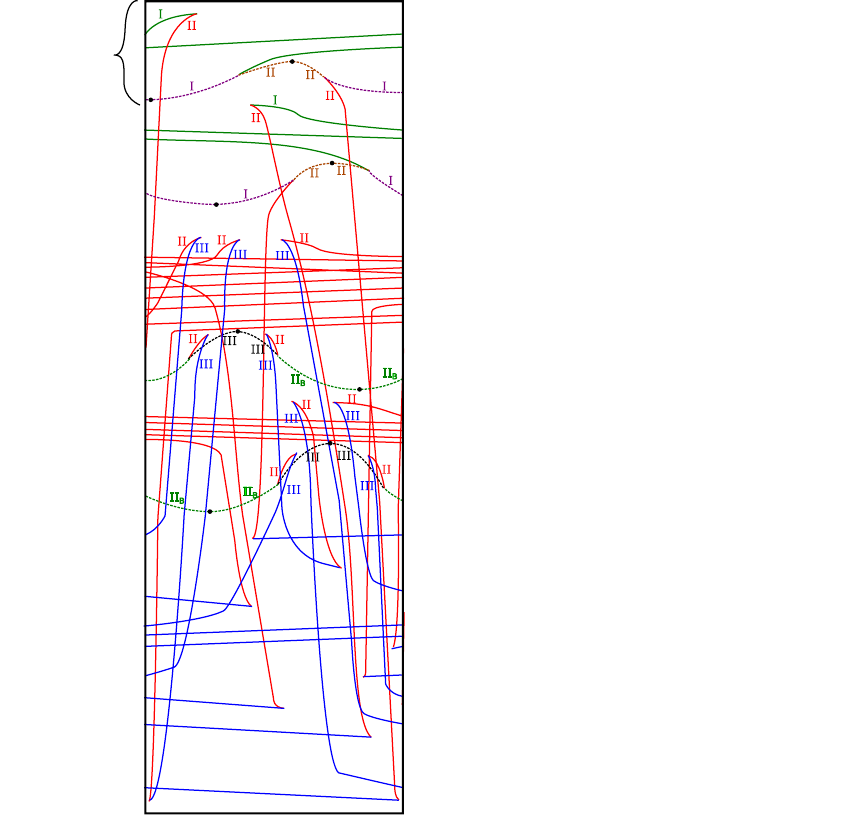}
\caption[A more complicated example of a valid singularity chart.]{A possible valid singularity chart for $\F_t$ when $\F_t$ is a composition of two band movies, two disk movie, and then nine cancellation movies. Left: All cancellations are simple. Right: The cancellations are generalized.}
\label{fig:fullchart2nofiber}
\end{centering}\end{figure}

\section{Extending the fibration on the knot complement to a fibration of the disk complement\label{construction}}

\subsection{Proof of Theorem~\ref{maintheorem}}

Recall the setup of the ribbon disk $D$ with boundary a fibered knot $K$. We have:
\begin{itemize}
\item $D\subset S^3\times[1,3]$.
\item For $t_1>t_2>\ldots>t_n\in(2,3)$, $D\cap (S^3\times t_i)=$ a link with a band attached (referred to as a ``band," ``ribbon band," or, ``fission band"). 
\item For $s_1>\ldots>s_{n+1}\in(1,2)$, $D\cap (S^3\times s_i)$ is an $(n+1-i)$-component unlink and a disjoint disk (referred to as a minimum disk).
\item For all other $u\in(s_{n+1},3]$, $D\cap (S^3\times u)$ is a nonsingular link. For all $u\in[0,s_{n+1})$, $D\cap (S^3\times u)=\emptyset$.
\end{itemize}

\begin{maintheorem}
Suppose the fission bands at $t=t_1,\ldots, t_n$ are disjoint and transverse to the fibration on $S^3\setminus\nu(K)$ (when projected to $S^3\times 3$). Then the fibration on $S^3\setminus\nu(K)$ extends to a fibration of handlebodies on $B^4\setminus\nu(D)$.
\end{maintheorem}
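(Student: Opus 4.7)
The plan is to build a valid movie of singular fibrations $\F_t$ on $(h^{-1}[s_{n+1}+\epsilon,3])\setminus\nu(D)$, apply Lemma \ref{usemoviealt} to obtain a fibration of that region, and then extend it trivially across the bottom region below $s_{n+1}+\epsilon$. At the top I take $\F_3$ to be the given fibration of $S^3\setminus\nu(K)$. Descending in $h$, between critical heights of $h\mid_D$ the slices are products and I extend $\F_t$ trivially. At each saddle height $t_i$ I play a band movie (Movie \ref{bandmove}) using the band $b_i$; the transversality hypothesis of the theorem is exactly what is needed, since a band movie requires each leaf of the current fibration to meet $b_i$ transversely. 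At each minimum $s_j$ with $j<n+1$ I play a disk movie (Movie \ref{diskmove}). Finally, I compose with a sequence of generalized cancellation movies so that the terminal fibration $\F_{s_{n+1}+\epsilon}$ is the standard fibration of the solid torus $S^3\setminus\nu(U)$ by meridional disks, where $U$ is the single unknot component of $D\cap h^{-1}(s_{n+1}+\epsilon)$ lying above the lowest minimum of $D$.

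By Lemma \ref{usemoviealt}, this produces a fibration of $(h^{-1}[s_{n+1}+\epsilon,3])\setminus\nu(D)$. The remaining bottom region $(h^{-1}[0,s_{n+1}+\epsilon])\setminus\nu(D)$ is diffeomorphic to $D^2\times A$ (with $A$ an annulus), which fibers trivially as $D^2\times I$ over the $S^1$ factor of $A$ and matches the meridional disk fibration along the interface; piecing these together yields a global fibration of $B^4\setminus\nu(D)$. That each fiber is a handlebody then follows immediately from \cite{jeff}. One can alternatively see this directly from the movie: the singularity chart records cone contributions as $1$- and $2$-handles to $H_\theta=\cup_t \F_t^{-1}(\theta)$, and after all cancellations the type-III dots are paired off against type-II cones created by the band movies, so each $H_\theta$ has a handle decomposition starting from a single $0$-handle (the bottom $3$-ball fiber) with only $1$- and $2$-handles attached.

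The main obstacle I anticipate is arranging the preconditions for the disk movies. A disk movie requires the boundary of the minimum disk to lie in a single leaf of the current fibration, with the disk mostly transverse to leaves and intersecting leaves in parallel circles near $\boundary\nu(R)$ plus a controlled set of Morse tangencies in the interior. At height $s_j$, the fibration has already been modified by the preceding band movies and earlier disk movies, so these conditions are not automatic; I will need isotopy arguments and intermediate positioning movies from Section \ref{sec:blocks} to place each minimum disk into disk-movie position before playing the corresponding disk movie. I will also need to verify that the cancellation step can indeed cancel every cone and dot introduced by the preceding band and disk movies, which should follow by tracking the connectivity of the singular leaves and exploiting the freedom to choose types when stabilizing, but needs careful bookkeeping.
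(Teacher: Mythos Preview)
Your proposal is correct and follows essentially the same approach as the paper: band movies at the saddles, disk movies at all but the last minimum, generalized cancellation movies to remove the remaining singularities, then Lemma~\ref{usemoviealt} and the trivial extension over the bottom $S^1\times B^3$, with the handlebody conclusion via \cite{jeff}.

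Your anticipated obstacles are lighter than you fear. The disk movie (Movie~\ref{diskmove}) does \emph{not} require the boundary of the minimum disk to lie in a single leaf; it only requires that near $\partial\nu(R)$ the leaves meet the disk in circles parallel to $\partial\nu(R)$, which is automatic since that boundary is a component of the link at that height and the fibration is already a product there. The remaining transversality conditions on the disk interior are generic and are achieved by a small perturbation of $d_i$, with no intermediate positioning movies needed. For the cancellation step, the equality of cone and dot counts is immediate bookkeeping rather than a subtle connectivity argument: each band movie contributes exactly two cones and no dots, and each disk movie contributes $k$ cones and $k+2$ dots for some $k\ge 1$; after $n$ band movies and $n$ disk movies the totals agree, and the remark following the generalized cancellation movie then lets you cancel everything. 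Finally, your sketch of the direct handlebody argument is slightly off: band movies do produce type~I cones (hence $1$-handles in $H_\theta$), and the paper cancels these geometrically against $2$-handles coming from disk movies on minimum disks that meet the bands (using a Hall's marriage argument), yielding $H\cong(\partial H\times I)\cup\{2\text{-},3\text{-handles}\}$; but since you already invoke \cite{jeff} this does not affect your main line.
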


\begin{proof}
Let $\F_3$ be the fibration on $S^3\setminus \nu(K)=h^{-1}(3)\setminus\nu(D)$. For $i=1,\ldots, n$, from $t=t_i+\epsilon$ to $t_i-\epsilon$, play a band movie in a neighborhood of $b_i$. Extend $\F_{t_n-\epsilon}$ vertically to $\F_2:h^{-1}(2)\setminus\nu(D)$. The singular fibration $\F_2$ has exactly $2n$ cone singularities and zero dot singularities.

Let $d_1,\ldots, d_{n+1}$ be disjoint disks in $S^3\times 2$ with $d_i$ corresponding to the minimum at $s_i$. For $i=1,\ldots, n$, from $t=s_i+\epsilon$ to $s_i-\epsilon$, play a disk movie near each $d_i$. Extend $\F_t$ vertically to $\F_{s_n-\epsilon}$ (that is, just past the penultimate minimum). 
The singular fibration $\F_{s_n-\epsilon}$ has an equal number of cone and dot singularities. (Say $\F_{s_n-\epsilon}$ has $r$ cone singularities.)

Finally, we play $r$ generalized cancellation movies to extend $\F_t$ to a movie on $h^{-1}([s_{n+1}+\epsilon,3])\setminus\nu(D)$ so that $\F_{s_{n+1}+\epsilon}$ has no singularities.

By Lemma~\ref{usemoviealt}, $(B^4\setminus\nu(D))\cap h^{-1}([s_{n+1}+\epsilon,3])$ admits a fibration $\F$. Note $(B^4\setminus\nu(D))\cap h^{-1}([0,s_{n+1}])\cong(B^4\setminus($trivial disk$))\cong S^1\times B^3$. Glue $\F$ to the fibration of $S^1\times B^3$ by $3$-balls to obtain a fibration $\G$ of $B^4\setminus\nu(D)$.

It follows immediately from~\cite[Theorem 1.1]{jeff} that the fibers of $\G$ are handlebodies, completing the proof of Theorem~\ref{maintheorem}. 
\end{proof}

\begin{remark}
 It is essential that $D$ is a ribbon disk and that the fission bands defining $D$ are transverse to the fibration. These conditions ensure that after extending the movie of fibrations below the heights of all bands and all but one minimum disk, there are an equal number of cone and dot singularities in the cross-section $\{t=s_n-\epsilon\}$ (above the bottom minimum disk).
\end{remark}

In subsection~\ref{geomcancel}, we show explicitly that the fibers of $\G$ are handlebodies. In section~\ref{sec:conclusion}, we show how to obtain $2$-handle attaching circles for a handlebody fiber $H$ of $\G$ on $\boundary H\cap h^{-1}(3)=($a fiber for $K)$. This explicitly describes the embedding of $H$ into $B^4\setminus\nu(D)$ (up to isotopy rel boundary).

\subsection{Remark on the generalized cancellation movies}\label{sec:simplecancel}

In the proof of Theorem~\ref{maintheorem}, the generalized cancellation movies played from $t=s_{n}-\epsilon$ to $s_{n+1}+\epsilon$ may actually be taken to be simple cancellation movies. This is not strictly necessary for Theorem~\ref{maintheorem}, but is necessary to explicitly understand the embedding of a handlebody fiber of $D$ into $B^4\setminus\nu(D)$, which we will discuss in section~\ref{sec:conclusion}. We first give a more general statement.

\begin{lemma}\label{simplecancel}
Let $V$ be a connected $3$-manifold with torus boundary, and $\F_1$ be a singular fibration on $V$ with $r$ cone and $r$ dot singularities, which are all type II or III. Assume that $\F_1^{-1}(\theta)\cap\boundary V$ is a connected simple closed curve for each $\theta$. Then by playing only simple cancellation movies, $\F_1$ can be extended to a valid movie of singular fibrations $\F_t$ on $V\times I$ so that $\F_0$ is nonsingular.
\end{lemma}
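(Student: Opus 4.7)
The plan is to induct on $r$, the number of cone singularities (equivalently of dot singularities). The base case $r=0$ is immediate: extend $\F_1$ vertically by setting $\F_t := \F_1$ for all $t \in [0,1]$. For the inductive step, I would locate a simple cancellation pair $(q,p)$---a type II cone $q$ and a type III dot $p$ of cancelling indices, equipped with an arc $\gamma$ from $q$ to $p$ avoiding all singular leaves of $\F_1$ in its interior, and with $q$ satisfying the ``two sheets in distinct components'' splitting condition of Movie~\ref{canceldot}. Playing the corresponding simple cancellation movie from $t=1$ down to some $t=1/2$ produces a new singular fibration on $V$ with $r-1$ cones and $r-1$ dots, still all of type II or III, and with the boundary fibration unchanged (simple cancellation is an interior operation). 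The inductive hypothesis then extends the movie down to a nonsingular $\F_0$.

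To locate such a pair, fix any type III dot $p$; assume without loss of generality that $p$ has index $0$ (the index-$3$ case is symmetric). For $\theta$ slightly greater than $\F_1(p)$, $\F_1^{-1}(\theta)$ has a small sphere component $S_\theta$ bounding a $3$-ball $B_\theta \subset V$ containing $p$. Extend this family as $\theta$ grows until $S_\theta$ first becomes singular at some $\theta^*$, and let $R_p \subset V$ be the open region foliated by the nested spheres $\{S_\theta\}$. The key topological fact is that the family cannot shrink to a second dot $p''$: otherwise $\overline{R_p}$ would be homeomorphic to $S^3$ (the suspension of $S^2$) and embedded in $V$ as a clopen submanifold, forcing $V \cong S^3$ and contradicting $\partial V \cong T^2$. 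Hence the family terminates at a cone $q$ on the singular leaf at $\theta^*$. Since every embedded annulus in $S^2$ is separating, the surgery at $q$ separates the sphere globally, so $q$ is a splitting cone in the sense required for simple cancellation. An arc $\gamma \subset \overline{R_p}$ from $p$ to $q$ transverse to the sphere leaves has $\gamma((0,1)) \subset R_p$ meeting no other singular leaves of $\F_1$.

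The main obstacle I anticipate is matching indices. For a $0$-$1$ simple cancellation with our index-$0$ dot $p$, we need $q$ of index $1$ (the ``merging'' case in which $q$ absorbs the sphere into another leaf component), whereas $q$ could a priori be of index $2$ (splitting the sphere into two smaller sphere components). In the index-$2$ case, I would iterate: follow one of the resulting split spheres as $\theta$ continues past $\theta^*$ and apply the same tracking analysis to obtain a further region whose far end is again a cone or dot. Finiteness of the critical set, combined with the $S^3$-exclusion argument applied at each step, forces this chain of sphere regions to terminate in either an index-$3$ dot (yielding a $2$-$3$ simple cancellation with some cone along the chain) or an index-$1$ merging cone (yielding a $0$-$1$ cancellation with a dot along the chain). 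Producing any one simple cancellation pair closes the inductive step.
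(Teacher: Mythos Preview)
Your overall strategy matches the paper's: induct on $r$ and, at each stage, locate a single simple-cancellable cone--dot pair by flowing outward from a dot through nested sphere leaves. There are, however, two genuine gaps.

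First, you conflate ``index-$1$ cone'' with ``merging cone.'' When the sphere $S_\theta$ grown from an index-$0$ dot $p$ first meets an index-$1$ cone $q$, it is not automatic that the two sheets near $q$ lie in distinct leaf components: a priori both could lie in $S_\theta$, in which case passing $q$ turns $S_\theta$ into a torus and your sphere-tracking collapses. The paper rules this out by a global counting argument: since $\chi(V)=0$ and the number of cones equals the number of dots, one has $k_0=k_1$ (and $k_2=k_3$); starting the sublevel sets $V_\theta$ with $k_0+1$ components (the boundary-meeting leaf together with the $k_0$ index-$0$ points) and ending with one, each of the $k_1=k_0$ index-$1$ cones must strictly decrease the component count, hence is merging. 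You need this argument or an equivalent.

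Second, in your index-$2$ iteration you follow a split sphere until hitting an index-$1$ merging cone $q'$ and then assert a $0$--$1$ cancellation ``with a dot along the chain.'' But the only dot in your chain is the initial $p$, and any arc from $q'$ back to $p$ passes through the intermediate index-$2$ cones, violating the hypothesis of the simple cancellation movie that $\gamma((0,1))$ avoid singular leaf components. The paper's fix is structural: rather than continuing to track the split sphere, it first observes that $V$ fibers over $S^1$ (via generalized cancellations) and is therefore irreducible; then the sphere near an index-$2$ cone bounds a $3$-ball $B\subset V$ not containing $p$, and $B$ necessarily contains a new dot $p'$. One restarts the search from $p'$, obtaining strictly nested balls $B\supsetneq B'\supsetneq\cdots$, which forces termination and always leaves a dot adjacent to the final (index-$1$) cone.
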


\begin{proof}
Note that $\F_1$ can be extended to a movie $\F_t$ in which $\F_0$ is nonsingular via {\emph{generalized}} cancellation movies. Therefore, $V=V\times 0$ admits a fibration over $S^1$. Then $V$ is an irreducible $3$-manifold (with boundary), and in particular every $2$-sphere embedded in $V$ bounds a $3$-ball.

\begin{proposition}\label{notindex2}
If $\F:=\F_1$ has any singularities, then it has a dot singularity $p$ so that when flowing from $p$ outward perpendicular to the leaves of $\F$, 
the first singular leaf component has a cone $q$ so that the following are true:
\begin{itemize}
\item With respect to the circular Morse function $\F$, $p$ and $q$ are critical points of index (0,1) or (3,2).
\item Suppose $\F^{-1}(\theta)\cap\nu(q)$ is a two-sheeted hyperboloid. The two sheets are {\emph{not}} in the same component of $\F^{-1}(\theta)$.
\end{itemize}
\end{proposition}

In more words, $p$ and $q$ are critical points of $\F$ of cancelling indices. Suppose they are index $0$ and $1$, respectively. The second condition relates to a handle decomposition of $V$ induced by $\F$, in which $p$ and $q$ correspond to a $0$- and $1$-handle. We are requiring that the feet of the $1$-handle corresponding to $q$ cannot both be onthe $0$-handle corresponding to $p$. This would imply that at least the handles corresponding to $p$ and $q$ could be cancelled, so we might hope to achieve this cancellation by cancelling the singularities $p$ and $q$ of $\F$. 
\begin{proof}
Suppose $\F$ has $k_i$ critical points of index $i$. Then $0=\chi(V)=k_0-k_1+k_2-k_3$. But recall there are an equal number of dot and cone singularities in $V$, so $k_0+k_3=k_1+k_2$. Therefore, $k_0=k_1$ and $k_2=k_3$.

Let $p$ be a dot singularity of $\F$, and reorient $\F$ if necessary so that $p$ is an index-$0$ critical point of $\F$. 
Flow from $p$ in the positive $S^1$ direction until finding a singular leaf (with cone point $q$). This must happen eventually, since $\F$ is defined on a manifold with boundary.

\begin{claim}
If $q$ is a cone of index $1$, then $q$ satisfies the second condition of the proposition.
\end{claim}
\begin{innerproof}[Proof of claim]
We argue that every index-$1$ cone satisfies the second condition.

Recall $k_0=k_1$. Fix a nonsingular leaf $L$ of $\F$ which meets $\boundary V$. Reparametrize $\F$ in a neighborhood of the index-$0$ points so that $\F^{-1}(0)=L\cup\{$the $k_0$ index-$0$ critical points$\}$ and the cone singularities all lie in distinct leaves of $\F$. Let $V_\theta=\cup_{0\le x\le\theta}\F^{-1}(x)$. Suppose $V_{\theta}$ has $n$ components for $\theta_0-\epsilon<\theta<\theta_0$ while $V_{\theta}$ has $n-1$ components for $\theta_0\le\theta<\theta+\epsilon$, for some $\theta_0$. Then $\F^{-1}(\theta_0)$ must contain a cone satisfying the proposition. See Figure~\ref{fig:vtheta}. (Recall each $\F^{-1}(\theta)$ meets $\boundary V$ in one curve.)

Note $V_0=\F^{-1}(0)$ has at least $k_0+1$ components, while $V_{2\pi}=V$ has one component. Therefore, the number of components must decrease at least $k_0$ times. But there are $k_1=k_0$ cones, so every index-$1$ cone must satisfy the second condition of proposition. This completes the proof of the claim.
\end{innerproof}

\begin{figure}\begin{centering}
\scalebox{0.8}{
\begingroup%
  \makeatletter%
  \providecommand\color[2][]{%
    \errmessage{(Inkscape) Color is used for the text in Inkscape, but the package 'color.sty' is not loaded}%
    \renewcommand\color[2][]{}%
  }%
  \providecommand\transparent[1]{%
    \errmessage{(Inkscape) Transparency is used (non-zero) for the text in Inkscape, but the package 'transparent.sty' is not loaded}%
    \renewcommand\transparent[1]{}%
  }%
  \providecommand\rotatebox[2]{#2}%
  \newcommand*\fsize{\dimexpr\f@size pt\relax}%
  \newcommand*\lineheight[1]{\fontsize{\fsize}{#1\fsize}\selectfont}%
  \ifx\svgwidth\undefined%
    \setlength{\unitlength}{441.85467577bp}%
    \ifx\svgscale\undefined%
      \relax%
    \else%
      \setlength{\unitlength}{\unitlength * \real{\svgscale}}%
    \fi%
  \else%
    \setlength{\unitlength}{\svgwidth}%
  \fi%
  \global\let\svgwidth\undefined%
  \global\let\svgscale\undefined%
  \makeatother%
  \begin{picture}(1,0.46051666)%
    \lineheight{1}%
    \setlength\tabcolsep{0pt}%
    \put(0,0){\includegraphics[width=\unitlength,page=1]{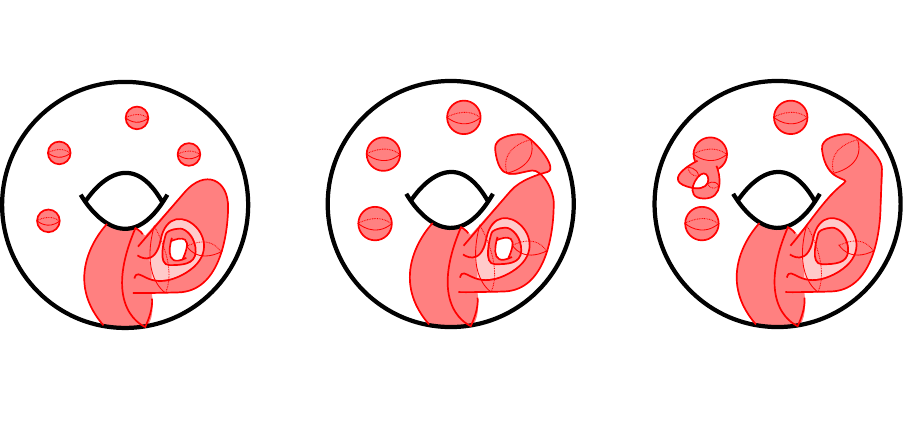}}%
    \put(0.09317116,0.17784553){\color[rgb]{0,0,0}\makebox(0,0)[lt]{\lineheight{0}\smash{\begin{tabular}[t]{l} \end{tabular}}}}%
    \put(0.37744745,0.41547825){\color[rgb]{0,0,0}\makebox(0,0)[lt]{\lineheight{0}\smash{\begin{tabular}[t]{l}increasing $\theta$\end{tabular}}}}%
    \put(0,0){\includegraphics[width=\unitlength,page=2]{Fig42.pdf}}%
    \put(0.41920768,0.06423994){\color[rgb]{0,0,0}\makebox(0,0)[lt]{\lineheight{1}\smash{\begin{tabular}[t]{c}number of\\components\\decreases\end{tabular}}}}%
    \put(0.68925525,0.063272){\color[rgb]{0,0,0}\makebox(0,0)[lt]{\lineheight{1}\smash{\begin{tabular}[t]{c}reach a cone singularity,\\but number of components\\does not decrease\end{tabular}}}}%
    \put(0.11668204,0.0604186){\color[rgb]{0,0,0}\makebox(0,0)[lt]{\lineheight{0}\smash{\begin{tabular}[t]{l}$V_\theta$\end{tabular}}}}%
    \put(0,0){\includegraphics[width=\unitlength,page=3]{Fig42.pdf}}%
  \end{picture}%
\endgroup%
}
\caption[Schematic of $V_\theta$ for increasing $\theta$.]{Schematic of $V_\theta$ for increasing $\theta$. Left: $V_\theta$ for $\theta>0$ small. Middle: The number of components of $V_\theta$ decreases. $\F^{-1}(\theta)$ contains a cone satisfying the second condition of the proposition. Right: We draw two cones. One is index-$2$, one is index-$1$ but does not satisfy the other condition. The number of components of $V_\theta$ does not decrease.}
\label{fig:vtheta}
\end{centering}\end{figure}

Now assume $q$ is an index-$2$ critical point. See Figure~\ref{fig:noindex2}. There is a leaf component of $\F$ near $q$ which is sphere separating $V$ into two pieces, one of which is a ball $B$ not containing $p$. There is some dot $p'$ inside $B$. Repeat the argument starting from the dot $p'$ instead of $B$. If $p'$ does not satisfy Proposition~\ref{notindex2}, then we find another spherical leaf component of $\F$ inside $B$, bouding a ball in $B'\subset B$ that does not contain $p'$. Then there is a dot point $p''$ inside $B'$. We continue inductively; since there are finitely many critical points in $\F$, eventually we must find a dot point as in Proposition~\ref{notindex2}, completing the proof of Proposition~\ref{notindex2}.
\end{proof}

\begin{figure}\begin{centering}
\begingroup%
  \makeatletter%
  \providecommand\color[2][]{%
    \errmessage{(Inkscape) Color is used for the text in Inkscape, but the package 'color.sty' is not loaded}%
    \renewcommand\color[2][]{}%
  }%
  \providecommand\transparent[1]{%
    \errmessage{(Inkscape) Transparency is used (non-zero) for the text in Inkscape, but the package 'transparent.sty' is not loaded}%
    \renewcommand\transparent[1]{}%
  }%
  \providecommand\rotatebox[2]{#2}%
  \newcommand*\fsize{\dimexpr\f@size pt\relax}%
  \newcommand*\lineheight[1]{\fontsize{\fsize}{#1\fsize}\selectfont}%
  \ifx\svgwidth\undefined%
    \setlength{\unitlength}{208.32100215bp}%
    \ifx\svgscale\undefined%
      \relax%
    \else%
      \setlength{\unitlength}{\unitlength * \real{\svgscale}}%
    \fi%
  \else%
    \setlength{\unitlength}{\svgwidth}%
  \fi%
  \global\let\svgwidth\undefined%
  \global\let\svgscale\undefined%
  \makeatother%
  \begin{picture}(1,0.83936143)%
    \lineheight{1}%
    \setlength\tabcolsep{0pt}%
    \put(0,0){\includegraphics[width=\unitlength,page=1]{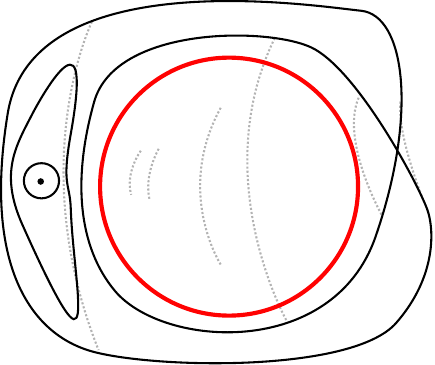}}%
    \put(0.26828896,0.64206108){\color[rgb]{1,0,0}\makebox(0,0)[lt]{\lineheight{0}\smash{\begin{tabular}[t]{l}$\partial B$\end{tabular}}}}%
    \put(0.06275359,0.48035431){\color[rgb]{0,0,0}\makebox(0,0)[lt]{\lineheight{0}\smash{\begin{tabular}[t]{l}$p$\end{tabular}}}}%
    \put(0.91386699,0.40104179){\color[rgb]{0,0,0}\makebox(0,0)[lt]{\lineheight{0}\smash{\begin{tabular}[t]{l}$q$\end{tabular}}}}%
    \put(0,0){\includegraphics[width=\unitlength,page=2]{Fig43.pdf}}%
    \put(0.73903253,0.43228118){\color[rgb]{0,0,0}\makebox(0,0)[lt]{\lineheight{0}\smash{\begin{tabular}[t]{l}$p'$\end{tabular}}}}%
    \put(0.39893502,0.44757173){\color[rgb]{1,0,0}\makebox(0,0)[lt]{\lineheight{0}\smash{\begin{tabular}[t]{l}$\partial B'$\end{tabular}}}}%
    \put(0,0){\includegraphics[width=\unitlength,page=3]{Fig43.pdf}}%
    \put(0.44990519,0.38767034){\color[rgb]{0,0,0}\makebox(0,0)[lt]{\lineheight{0}\smash{\begin{tabular}[t]{l}$p''$\end{tabular}}}}%
  \end{picture}%
\endgroup%

\caption[Finding a cone that can cancel a dot.]{Starting from dot $p$ and flowing perpendicular to the singular fibration $\F$, the next singular leaf we find has a cone $q$ corresponding to an index-$2$ critical point. A nearby leaf component is a sphere bounding ball $B$ containing dot $p'$. Starting from $p'$ and flowing perpendicular to the leaves of $\F$, again the first cone corresponds to a critical point of index-$2$. A nearby leaf component is a sphere bounding ball $B'\subset B$ containing dot $p''$.}
\label{fig:noindex2}
\end{centering}\end{figure}

Let $p,q$ satisfy Proposition~\ref{notindex2}. Let $\eta:[0,1]\to V$ be an arc from $q$ to $p$ intersecting the leaves of $\F_1$ 
transversely. Proposition~\ref{notindex2} exactly ensures that we may play a simple cancellation movie around $\eta$ 
to extend $\F_1$ to a valid movie on $t\in[1-\epsilon,1]$ so that $\F_{1-\epsilon}$ 
has $r-1$ cone and $r-1$ dot singularities. Proceed inductively to extend $\F_t$ to a valid movie on $t\in[0,1]$ so that $\F_0$ is nonsingular. This completes the proof of Lemma~\ref{simplecancel}.
\end{proof}

In the setting of Theorem~\ref{maintheorem}, $\F_{s_n-\epsilon}$ is a singular fibration of a solid torus satisfying the hypotheses of Lemma~\ref{simplecancel}. Therefore, we may take all the cancellation movies played from $t=s_n-\epsilon$ to $s_{n+1}+\epsilon$ to be simple.

\subsection{Understanding the fibers of $B^4\setminus\nu(D)$}\label{geomcancel}
In this subsection, we explicitly show that the fibers of $\G$ are handlebodies, via the construction of Theorem~\ref{maintheorem} and Theorem~\ref{fibrationthm}.

Let $H$ be the fiber of $B^4\setminus\nu(D)$ obtained from Theorem~\ref{maintheorem}. Then $F=H\cap\boundary B^4$ is a leaf in the fibration of $S^3\setminus\nu(K)$.
Explicitly, $H\cong F\times I\cup\{1$-,$2$-,$3$-handles$\}$, where the $1$-handles correspond to the type I cones which appear and vanish during a band movie. 

If a minimum disk $d_i$ intersects the interior of band $b_j$ (when both are projected to the same height), then when a disk movie is played near $d_i$, we obtain $2$-handles in $H$ geometrically cancelling all the $1$-handles corresponding to the band movie for band $b_j$. See Figure~\ref{fig:handlescancel}. We must now only be careful that a different minimum disk cancel the $1$-handles for each band.

\begin{figure}\begin{centering}
\includegraphics[width=.6\textwidth]{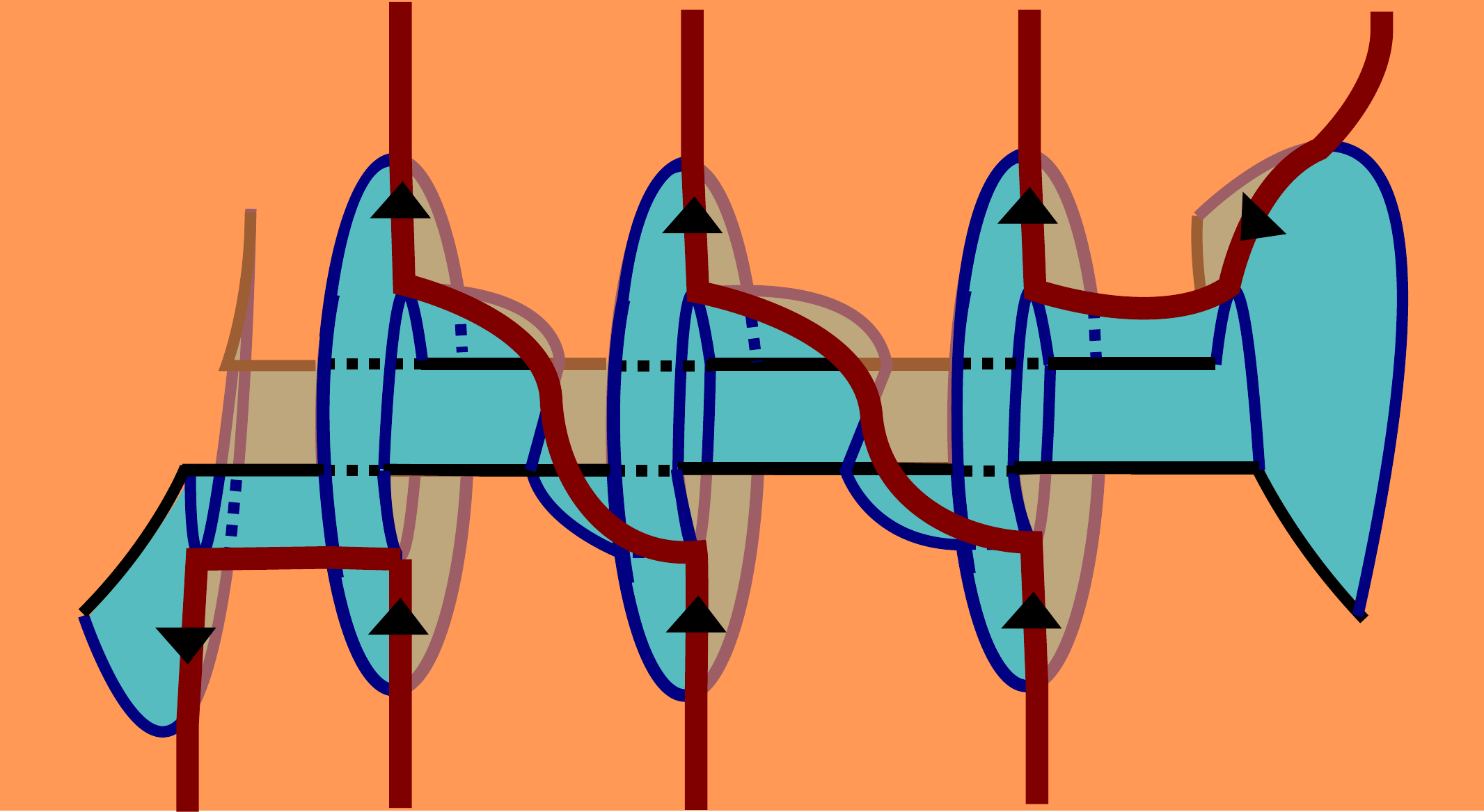}
\caption[All 1-handles in the 3-dimensional fiber of $B^4\setminus\nu(D)$ are geometrically cancelled.]{Rectangular sheet: part of the minimum disk $d_i$ that intersects a core of the band $b_j$. Surface: A leaf $L$ of $\F_t$ for $t=t_i-\epsilon$. Some curves in $L\cap d_i$ run along the cores of the $1$-handles corresponding to $b_j$ (attached to the $3$-dimensional fiber $H$). These curves are all distinct, since they locally have the same orientation in $d_i$. (Here, we are using the fact that $b_j$ is transverse to the fibration $\F_t$.) Therefore, $2$-handles attached to $H$ corresponding to the Type II cones arising during the disk movie on $d_i$ will geometrically cancel the $1$-handles corresponding to the type I cones arising during the band movie on $b_j$. }
\label{fig:handlescancel}
\end{centering}\end{figure}

 Let $d_1,\ldots, d_{n+1}$ be the minimum disks of $D$ (projected into $h^{-1}(3)=\boundary B^4=S^3$).
\begin{lemma}\label{matchingprop}
Up to reordering the minimum disks $\{d_i\}$, we may assume $d_i\cap \mathring{b_i}\neq\emptyset$.
\end{lemma}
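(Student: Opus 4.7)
The plan is to reframe the lemma as a bipartite matching problem and apply Hall's marriage theorem. I would form the bipartite graph $G$ with parts $B=\{b_1,\ldots,b_n\}$ and $M=\{d_1,\ldots,d_{n+1}\}$, with $b_j\sim d_i$ iff $b_j\cap\mathring{d_i}\neq\emptyset$ after projecting to a common height. The desired reordering exists precisely when $G$ admits a matching saturating $B$, and Hall's theorem reduces this to verifying that $|N(S)|\geq|S|$ for every $S\subset B$.

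The structural input is the handle-theoretic tree $T$ naturally associated to $D$. Since $D$ has $n+1$ $0$-handles (the minima) and $n$ $1$-handles (the bands) and is simply connected, the graph on $M$ with each band regarded as an edge between the two $0$-handles joined by its short arcs is a tree with $n+1$ vertices and $n$ edges. Consequently any subset $S\subset B$, viewed as a set of edges, spans a subforest of $T$ with at least $|S|+1$ endpoint vertices.

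To verify Hall's condition, I would establish a geometric claim linking $T$ to the intersection pattern in $G$: namely, that for each $S\subset B$, the endpoints of the edges in $S$ (as edges of $T$) are all contained in $N_G(S)$. The intuition is that when a minimum disk $d_i$ is isotoped up to the common height of the bands, its short-arc attachment to a band $b_j$ forces $b_j$'s rectangle to sweep across $\mathring{d_i}$ locally. Combined with the tree calculation, this gives $|N(S)|\geq|S|+1$, verifying Hall's condition with room to spare and producing the matching.

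The main obstacle is rigorously establishing the geometric intersection claim, since simple planar configurations show that a band need not meet every one of its tree-neighbors after a naive projection. Making this step work likely requires carefully normalizing the isotopy of each minimum disk into the common height, or alternatively weakening the intersection claim to a more local statement (for example, that each band meets a minimum disk on each side of the cut it defines in $T$) that is still sufficient to satisfy Hall's condition.
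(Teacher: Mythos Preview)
Your plan—Hall's marriage theorem applied to the bipartite graph between bands and minimum disks, with Hall's condition verified via the tree structure on the $0$- and $1$-handles of $D$—is exactly the paper's approach, and you have correctly isolated the one nontrivial point: tree-adjacency of $b_j$ and $d_i$ does not automatically force $d_i\cap\mathring{b_j}\neq\emptyset$.

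The paper closes this gap not by any delicate normalization of the projection, but by a reduction. If $b_j$ is tree-adjacent to $d_i$ (meaning $\partial d_i\cap\partial b_j\neq\emptyset$) while $d_i\cap\mathring{b_j}=\emptyset$, then the minimum corresponding to $d_i$ and the saddle corresponding to $b_j$ form a geometrically cancelling pair of critical points of $h|_D$, and one may isotope $D$ in $B^4$ to eliminate both without disturbing the remaining bands. Iterating this cancellation drops $n$ until every tree-adjacent pair satisfies $d_i\cap\mathring{b_j}\neq\emptyset$, so that tree-adjacency implies adjacency in the bipartite graph. At that point your forest count gives $|N(S)|\geq |S|+1$ for every subset $S$ of bands—the paper phrases the contrapositive as ``otherwise some $b_j\in S$ would be a fusion rather than a fission band''—and Hall finishes the argument.

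So the missing ingredient in your proposal is precisely this cancellation step; once you add it, your outline becomes the paper's proof.
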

\begin{proof}
Let $\Gamma$ be a bipartite graph with $2n+1$ vertices in $V\cup W=\{v_1,\ldots, v_{n+1}\}\cup\{w_1,\ldots, w_n\}$. Say $v_i$ and $w_j$ are adjacent if $d_i\cap \mathring b_j\neq\emptyset$. If $\boundary d_i\cap\boundary b_j\neq\emptyset$ but $d_i\cap\mathring{b_j}=\emptyset$, then we may isotope $D$ to cancel $d_i$ and $b_j$ (without moving the other bands). Assume we have cancelled all such pairs. 

Let $X\subset W$ be a set of $k\le n$ vertices. Let $N_\Gamma(X)$ denote the subset of $V$ which is adjacent to $X$. Suppose $|N_\Gamma(X)|<k$. Then some band $b_i$ with $w_i\in X$ is a fusion band, rather than fission, yielding a contradiction. Thus, by Hall's Marriage Theorem, there exists a $W$-saturated matching of $\Gamma$. That is, we can relabel the disks $d_i$ (and corresponding vertices $w_i$) so that $d_i\cap\mathring{b_i}\neq\emptyset$ (i.e.\ $v_i$ is adjacent to $w_i$).
\end{proof}

By Proposition~\ref{matchingprop}, a collection of $2$-handles in the fiber $H$ of $\G$ on $B^4\setminus \nu(D)$ (resulting from the minimum disk $d_i$) geometrically cancel all the $1$-handles corresponding to the band $b_i$. Therefore, $H\cong(\boundary H\times I)\cup\{2$-,$3$-handles$\}$, so $H$ is a handlebody.

\subsection{Proof of Theorem~\ref{secondtheorem}}

\begin{secondtheorem}
Let $K$ be a fibered knot in $S^3$. Let $J\subset S^3$ be a knot so that there is a {\emph{ribbon concordance}} from $K$ to $J$ (i.e.\ there is an annulus $A$ properly embedded in $S^3\times I$ with boundary $(K\times 1)\sqcup (\overline{J}\times 0)$ so that $\proj_I|_{A}$ is Morse with no local maxima). 
Say the index-$1$ critical points of $A$ correspond to fission bands attached to $K$. If the bands can be isotoped to be transverse to the fibration on $S^3\setminus\nu(K)$, then $(S^3\times I)\setminus\nu(A)$ is fibered by compression bodies.
\end{secondtheorem}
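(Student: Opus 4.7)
The plan is to mimic the proof of Theorem \ref{maintheorem} nearly verbatim, replacing the ribbon disk $D\subset B^4$ with the ribbon concordance annulus $A\subset S^3\times I$. First I would isotope $A$ so that the index-$1$ critical points (fission bands) occur at heights $t_1>\cdots>t_n$ just below $t=1$ and the index-$0$ critical points occur at heights $s_1>\cdots>s_n$ just above $t=0$, with all bands above all minima. Since $\chi(A)=0$ and $A$ has no local maxima, the number of bands equals the number of minima; this balanced count is exactly what makes the proof terminate without the ``extra minimum'' step needed in Theorem \ref{maintheorem}, where the leftover minimum was absorbed by a standard fibration of $S^1\times B^3$.

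Starting from the fibration $\F_1$ on $S^3\setminus\nu(K)=h^{-1}(1)\setminus\nu(A)$, I would play a band movie at each $t_i$ (legal by the transversality hypothesis on the bands) and then a disk movie at each $s_i$. The bookkeeping from the proof of Theorem \ref{maintheorem} then shows that just below the lowest minimum, the resulting singular fibration on $h^{-1}(s_n-\epsilon)\setminus\nu(A)$ has equal numbers of cone and dot singularities. I would finish the movie by playing only simple cancellation movies (justified by Lemma \ref{simplecancel}, applied to the solid-torus cross-section $h^{-1}(s_n-\epsilon)\setminus\nu(A)$) to obtain a nonsingular $\F_0$ on $h^{-1}(0)\setminus\nu(A)=S^3\setminus\nu(J)$. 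This $\F_0$ exhibits $J$ as a fibered knot with genus at most the genus of the Seifert surface built during the movie, and Lemma \ref{usemoviealt} then promotes the movie to an honest fibration $\mathcal{G}$ of $(S^3\times I)\setminus\nu(A)$.

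For the compression-body conclusion on the fibers of $\mathcal{G}$, I would repeat the handle cancellation argument of Section \ref{geomcancel}. Lemma \ref{matchingprop} applies verbatim to pair each fission band $b_i$ with a minimum disk $d_i$ meeting $\mathring b_i$, and the $1$-handles arising in the fiber $H$ from $b_i$ cancel geometrically with $2$-handles arising from $d_i$. After these cancellations, $H$ is built from $F_K\times I$ (where $F_K=H\cap(S^3\times 1)$ is a fiber for $K$) by attaching only $2$- and $3$-handles, and the other boundary component is $F_J=H\cap(S^3\times 0)$, a fiber for $J$. Hence $H$ is a compression body with $\boundary_+ H=F_K$ and $\boundary_- H=F_J$, from which $g(J)\le g(K)$ follows immediately.

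The main obstacle I expect is the last step: one must verify that the surviving $2$- and $3$-handles really do yield a compression-body structure oriented from $F_K$ down to $F_J$, rather than some other $3$-manifold with two boundary components. This amounts to carefully tracking which boundary component of $H$ receives the attaching region of each leftover handle, using the same explicit fiber-by-fiber description of the band, disk, and simple cancellation movies as in Section \ref{geomcancel}. This is the only genuinely new verification beyond the proof of Theorem \ref{maintheorem}, and once completed the inequality $g(J)\le g(K)$ is a formal consequence of the compression-body structure.
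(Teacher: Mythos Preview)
Your proposal is correct and follows essentially the same approach as the paper's proof: play band movies, then disk movies at all $n$ minima, then cancel to get $\F_0$ nonsingular on $S^3\setminus\nu(J)$, invoke Lemma~\ref{usemoviealt}, and cite the handle-cancellation argument of Section~\ref{geomcancel} to conclude $H\cong(F_K\times I)\cup\{2\text{-},3\text{-handles}\}$ is a compression body. One small correction: the cross-section $h^{-1}(s_n-\epsilon)\setminus\nu(A)$ is $S^3\setminus\nu(J)$, which is not a solid torus in general---but Lemma~\ref{simplecancel} only requires a connected $3$-manifold with torus boundary and connected curve intersections $\F^{-1}(\theta)\cap\partial V$, both of which hold here, so the invocation is still valid (the paper itself notes the cancellations can be taken simple by Lemma~\ref{simplecancel}, though it first phrases them as generalized).
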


\begin{proof}
We proceed as in Theorem~\ref{maintheorem}. Let $A$ be an annulus in $S^3\times I$ with boundary $K\times 1\sqcup\overline{J}\times 0$, with no local maxima with respect to projection on $I$. Say the local minima of $A$ occur in $S^3\times s_i$ for $s_1>s_2>\ldots> s_n\in[2/7,3/7]$ and the index-$1$ critical points in $S^3\times t_i$ for $t_1>t_2>\ldots>t_n\in[5/7,6/7]$. By hypothesis, we assume that bands corresponding to the index-$1$ critical points are transverse to the fibration $\F_1$ on $(S^3\times 1)\setminus\nu(K)$ (when projected to $S^3\times 1$).

We build a movie of singular fibrations $\F_t$ on $(S^3\times I)\setminus\nu(A)$ as in Theorem~\ref{maintheorem}. Extend $\F_t$ to $(S^3\times[4/7,1])\setminus\nu(A)$ by playing a band movie near each band of $A$. Then extend to $(S^3\times[1/7,1])\setminus\nu(A)$ by attaching playing a disk movie near each minimum disk of $A$. There are an equal number of cone and dot critical points in $\F_{1/6}$, so we may play generalized cancellation movies to extend $\F_t$ to $(S^3\times I)\setminus\nu(A)$ with $\F_0$ a nonsingular fibration. (In fact, these generalized cancellations could be taken to be simple by Lemma~\ref{simplecancel}.) 
Thus, $J$ is fibered.

By Lemma~\ref{usemoviealt}, $(S^3\times I)\setminus\nu(A)$ admits a fibration $\G$ with fiber $H$. Via the argument of subsection~\ref{geomcancel}, $H\cong ([H\cap (S^3\times 1)]\times I )\cup\{2$- and $3$-handles$\}$. Therefore, $H$ is a compression body from the fiber for $K$ to the fiber for $J$.

Note $H\cong (\mathring{\Sigma}_{g(K)}\times I)\cup\{g(K)-g(J)$ $2$-handles$\}\cong\natural_{g(K)+g(J)} S^1\times D^2$.

\end{proof}

\section{Conclusion: explicitly finding the fiber of $B^4\setminus\nu(D)$\label{sec:conclusion}}

Let $F$ be a fiber for fibered knot $K$. Let $D\subset B^4$ be a ribbon disk for $K$ so that fission bands $b_i$ for $D$ are transverse to the fibration of $S^3\setminus\nu(K)$. (Recall from Section~\ref{ch:intro} that while $D$ is properly embedded in $B^4$, $D$ can be defined by fission bands attached to $K$ in $S^3$.) By Theorem~\ref{maintheorem}, the fibration extends to a fibration $\G$ on $B^4\setminus\nu(D)$. One fiber of $\G$ has boundary $F\cup D$; call this fiber $H$.

In the proof of Theorem~\ref{maintheorem}, we constructed $H$ by building a valid movie $\F_t, t\in[s_{n+1}+\epsilon,3]$ on $B^4\setminus\nu(D)$ so that $H=\cup_t \F_t^{-1}(\theta))\cup (3$-handle$)$ for some $\theta_0$ (in particular, $F=\F_3^{-1}(\theta_0)$). The $2$-handles of $H$ correspond to the type II cones and type \btwo bowls of $\F_t$ intersecting the vertical $\theta=\theta_0$ line in a valid chart $\mathcal{C}$ for $\F_t$. 

Recall $\F_t$ may be built from band, disk, and simple (see subsection~\ref{sec:simplecancel}) cancellation movies. Say that from $t=3$ to $2$, $\F_t$ is built from band movies; from $t=2$ to $1$, $\F_t$ is built from disk movies; from $t=1$ to $s_{n+1}+\epsilon$, $\F_t$ is built from simple cancellation movies.
In $\mathcal{C}$, every type II cone arises during a band movie or a disk movie. The type II cones arising in a band movie correspond to arcs which are nearly vertical until $t=1$. From $t=1$ to $t=s_{n+1}+\epsilon$, during simple cancellation movies all cones correspond to arcs which are nearly vertical. Therefore, we may choose $\theta_0$ so that the vertical $\theta=\theta_0$ line intersects type II arcs only in $1<t<2$, does not intersect the arcs of type II cone singularities arising from band movies, and does not meet any type \btwo singularities. (Perhaps after reparametrizing $\F_t$ so that type \btwo and II boundary arcs are short.) See Figure~\ref{fig:fullchart1} (left).

\begin{figure}\begin{centering}
\scalebox{0.9}{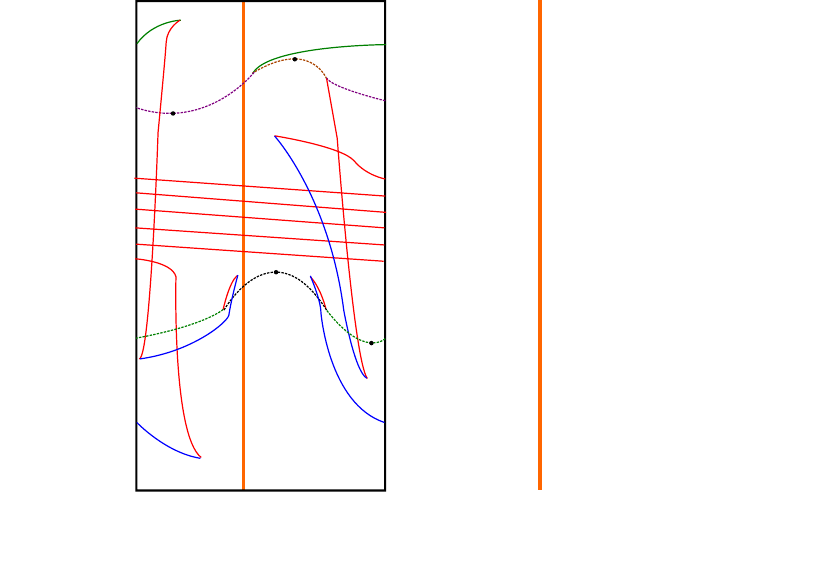}
\caption[Valid singularity chart of the fibration constructed on a ribbon disk complement.]{We draw potential valid singularity charts for $\F_t$, $t\in[s_{n+1},3]$. By Lemma~\ref{simplecancel}, we may assume that the cancellations of $\F_t$ are all simple, as on the left. Then all $2$-handles of $H$ arise from positioning a cone or playing a minimum movie during a disk movie. Therefore, the $2$-handles are attached exactly along the intersections of $\F_t^{-1}(\theta)$ with a minimum disk, projected to the same height.}
\label{fig:fullchart1}
\end{centering}\end{figure}

To recap, this means that every type II or \btwo singularity meeting the line $\theta=\theta_0$ in $\mathcal{C}$ arises during a disk movie. In $H\cap h^{-1}(t)$, as $t$ decreases, passing through these singularities corresponds to compressing $H\cap h^{-1}(t)$ along a closed curve of intersection with a minimum disk of $D$. We conclude that intersections of the minima of $D$ (projected to $S^3$) with $H\cap h^{-1}(3)=F$ give attaching circles of the $2$-handles of $H$. (This decomposition is not minimal if the line $\theta=\theta_0$ meets type III arcs in $\mathcal{C}$, but we may choose a set of linearly independent curves to give $2$-handles if we wish.)

Therefore, to find the $2$-handle attaching curves on $F\cup D$ defining $H$, we do the following:
\begin{itemize}
\item Find a disjoint collection of disks $d_1\sqcup \cdots\sqcup d_n$ in $S^3$ bounded by $K$ resolved along the $b_i$.
\item Isotope the $d_i$ so that near the interior of $\boundary d_i\cap\boundary F$, $d_i$ and $F$ coincide.
\item Draw a (maximal linearly independent) collection of closed curves $d_i\cap F$ on $F$. These are $2$-handle attaching curves on $F\cup D$ which yield $H$.
\end{itemize}

We could equivalently state this process as:
\begin{itemize}
\item Project $D$ to $S^3$ to find an immersed ribbon disk $D'$ with boundary $K$.
\item Isotope $D'$ so that near $\boundary D'=\boundary F$, $F$ coincides with the sheet of $D'$ containing the boundary.
\item Draw a (maximal linearly independent) collection of closed curves in $D'\cap F$ on $F$. These are $2$-handle attaching curves on $F\cup D$ which yield $H$.
\end{itemize}

In Figure~\ref{fig:820}, we give explicitly a fibered ribbon disk and use the above procedure to find attaching circles for $2$-handles in the handlebody fiber. In Figure~\ref{fig:11n74} we repeat this process for two fibered ribbon disks with the same boundary to find the fiber of a fibered $2$-knot in $S^4$.

\begin{figure}\begin{centering}
\includegraphics[width=.65\textwidth]{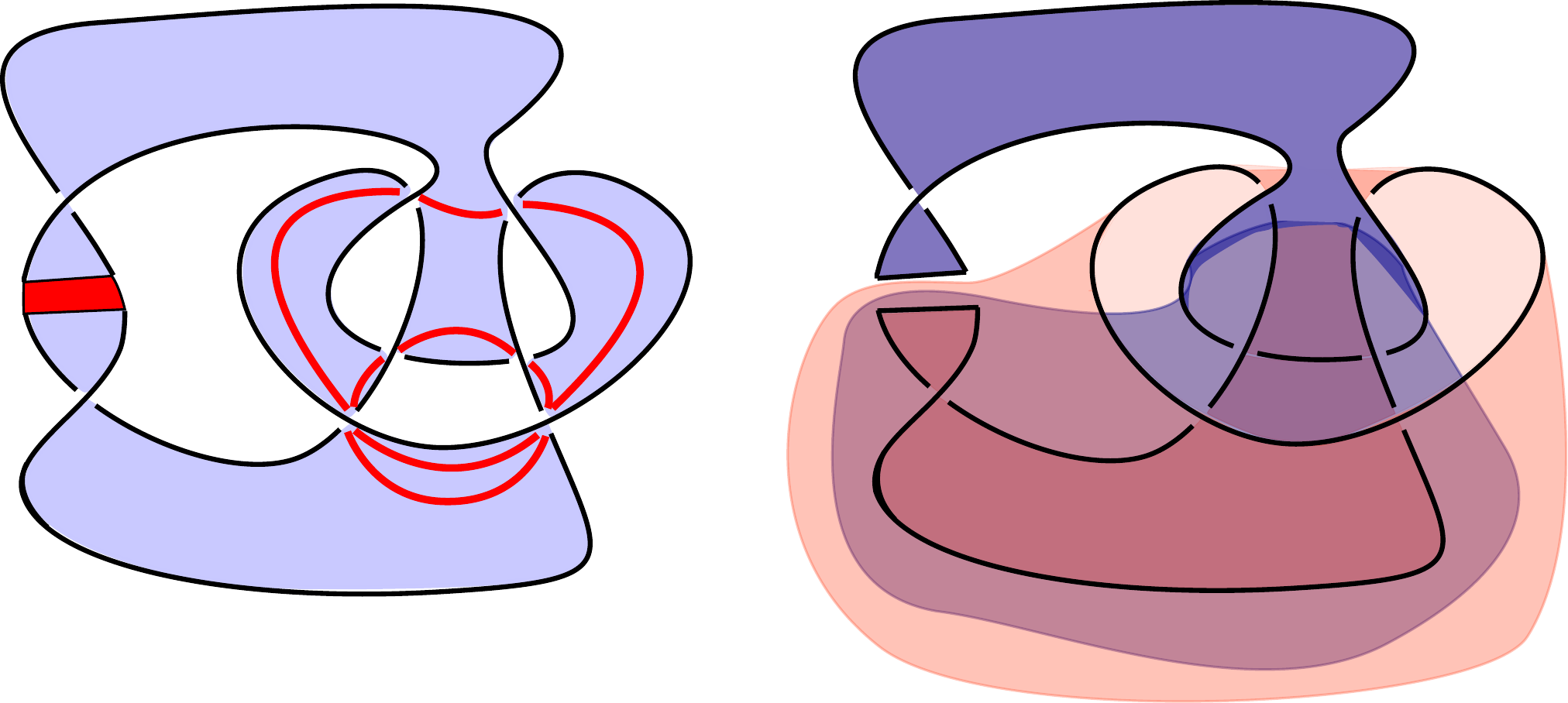}
\caption[Fibering the complement of a ribbon disk bounded by $\mathtt{8_{20}}$.]{Left: The fibered knot $K=\mathtt{8_{20}}$ (i.e.\ the pretzel knot $P(2,-3,3)$). We draw a fission band defining a ribbon disk $D$ with boundary $K$ and shade a genus-$2$ fiber surface for $K$. On the fiber, we draw two attaching circles defining a genus-$2$ handlebody fiber of $B^4\setminus\nu(D)$. Right: Two disjoint disks bounded by $K$ resolved along the fission band. These disks intersect the fiber in arcs (near the band) and two curves, which are the attaching circles drawn on the left.}
\label{fig:820}
\end{centering}\end{figure}

\begin{figure}\begin{centering}
\includegraphics[width=.85\textwidth]{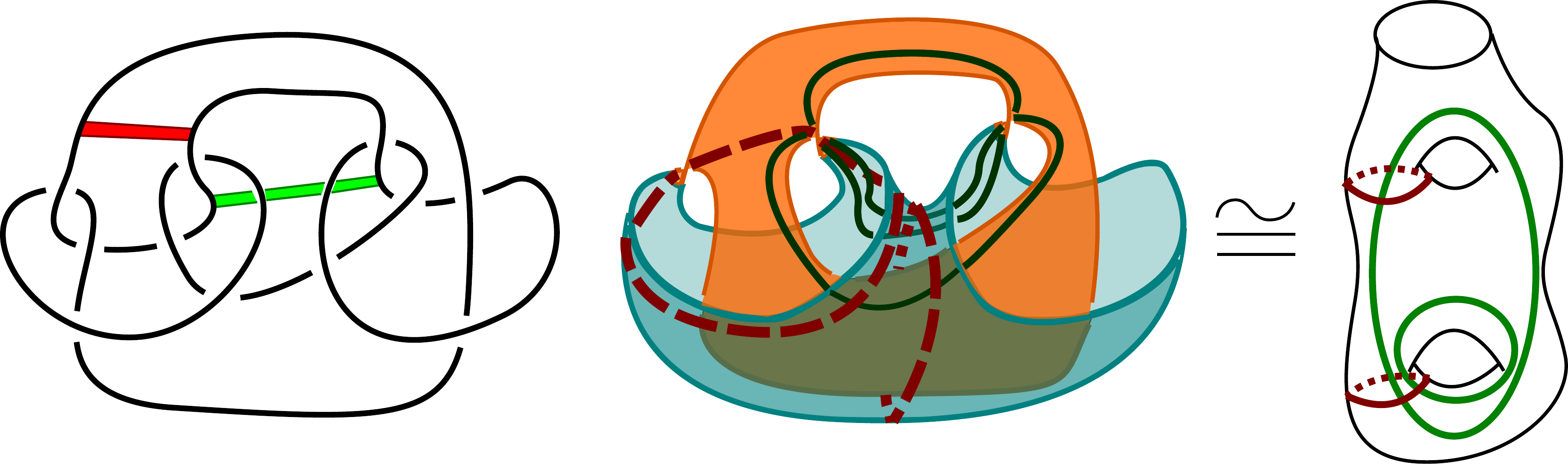}
\caption[Fibering the complements of two distinct ribbon disks bounded by $\mathtt{11n_{74}}$.]{Left: A diagram for the fibered ribbon knot $K=\mathtt{11n_{74}}$ (i.e.\ the pretzel knot $P(3,-3,3,-2)$). Each of the pictured bands specifies a $2$-minimum ribbon disk $D_i$ for $K$. (These examples of disks come from~\cite{doublyslice}, who produce many examples of doubly slice knots.) Using the construction of Theorem~\ref{maintheorem}, we find attaching circles for the handlebody fiber for each disk. Middle: The fiber $F$ for $K$ consists of two annuli glued along three half-twisted bands. The two sets of circles specify the handlebody fiber for the disks given by the bands. Right: On $F\cup D$, the attaching circles specify a Heegaard splitting of $S^3$. Thus, $D_1\cup\overline{D_2}\subset S^4$ is fibered by $3$-balls (so $D_1\cup\overline{D_2}$ is the unknotted sphere).}
\label{fig:11n74}
\end{centering}\end{figure}

\section{More examples\label{sec:examples}}
It remains open whether any/every ribbon disk for a fibered knot $K$ in $S^3$ can be defined by bands transverse to the fibration of $S^3\setminus\nu(K)$. However, for specific examples of $K$ we can check that this condition is satisfied by some ribbon disk $D$.

In particular, we find fibered ribbon disks for small fibered ribbon knots.
\begin{proposition}
Let $K$ be a prime fibered ribbon knot with crossing number at most $12$. Then $K$ bounds a ribbon disk $D\subset B^4$ so that $S^4\setminus\nu(D)$ is fibered by handlebodies.
\end{proposition}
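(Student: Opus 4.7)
The plan is to proceed case by case through Livingston's list \cite{livingston} of the $74$ prime fibered ribbon knots with crossing number at most $12$, and for each exhibit a ribbon disk $D$ satisfying the transversality hypothesis of Theorem \ref{maintheorem}. The conclusion (that $B^4\setminus\nu(D)$ is fibered by handlebodies) then follows immediately from Theorem \ref{maintheorem}.

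The first reduction is to separate the $74$ knots into two groups: those for which some ribbon disk is known to have only two minima, and the rest. For the first group, Corollary \ref{diskcor} already gives the conclusion with no further work; in particular the $P(\pm 2, n, -n)$ family noted in the introduction is handled at once, and a substantial portion of the remaining entries on Livingston's list are known to bound two-minimum disks (these can be read off the published ribbon-disk diagrams). So for this group we simply cite Corollary \ref{diskcor}.

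For the remaining knots I would exhibit, for each, an explicit ribbon diagram $\mathcal{D}$ with fission bands $b_1,\ldots,b_n$ and show that the bands can be arranged to be simultaneously transverse to the open-book fibration of $S^3\setminus\nu(K)$. The cleanest way to do this is via Remark \ref{remark:infiber}: it suffices to isotope the bands so that each $b_i$ lies in some minimum-genus Seifert surface for $K$, because then the local perturbation of Figure \ref{fig:bandinsurface} makes them transverse. Genus realization can be verified from the known Seifert genus (which equals the fiber genus for fibered knots) together with an Euler-characteristic computation after band resolution, exactly as in the argument of Corollary \ref{diskcor}: if the band resolution raises $\chi$, Thompson's proposition \cite[Prop. 3.1]{thompson} places the band in a minimum-genus surface. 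For multiple bands one applies this iteratively, arranging disjointness by small isotopies in $S^3$ away from $K$.

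The main obstacle will be the handful of knots in the second group whose published ribbon disks have bands that do \emph{not} each individually raise Euler characteristic of the corresponding resolved link, since then Thompson's result does not apply directly and one must produce a different ribbon diagram (or a direct transversality argument) by hand. For these residual cases I would either (i) find a new ribbon diagram by band slides and band isotopies until every band sits in a genus-realizing Seifert surface, or (ii) verify transversality directly by comparing the bands against an explicit open-book description of the fibration (e.g.\ obtained from a positive factorization of the monodromy). Because there are only finitely many such exceptional knots, this is a finite verification; with current knot-theoretic software and the explicit ribbon disks in Livingston's tables it is routine to check each one. Assembling the two groups completes the proof.
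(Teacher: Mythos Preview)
Your overall structure matches the paper's proof: split into knots bounding two-minimum disks (handled by Corollary~\ref{diskcor}) and a residual finite list to be checked by hand. The paper does exactly this, citing \cite{kawauchi} for $\le 10$ crossings and Lamm's symmetric-union diagrams \cite{lamm} for most of the $11$- and $12$-crossing cases, leaving ten exceptional knots.

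There is, however, a genuine gap in your treatment of the residual cases. Your proposed iterative use of \cite[Prop.~3.1]{thompson} does not work as stated. Thompson's result places a \emph{single} band in a minimum-genus surface provided that resolving it raises $\chi$; but for a disk with $n\ge 2$ fission bands, resolving any one band $b_i$ from $K$ produces a $2$-component link which is typically \emph{not} an unlink and need not have $\chi>\chi(K)$. So the hypothesis of Thompson's proposition generally fails for the individual bands. Even in cases where each band separately raises $\chi$, the result only isotopes each $b_i$ into \emph{some} fiber; it does not give you simultaneous placement of all bands in (or transverse to) the \emph{same} fibration while keeping them disjoint. Your phrase ``arranging disjointness by small isotopies'' does not address this: the isotopy carrying $b_2$ into a fiber is not a generic perturbation and may be obstructed by $b_1$.

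You acknowledge this and fall back to ``direct verification,'' but then stop, calling it ``routine.'' That verification \emph{is} the proof. The paper carries it out explicitly: for each of the ten exceptional knots it draws a $3$-minimum ribbon disk together with a fiber surface (Figure~\ref{fig:verification}), and observes case by case that each band either lies in the drawn fiber or has core equal to an arc in a fiber twisted monotonically around the binding, hence transverse. Without producing those diagrams (or an equivalent explicit check), your argument is a plan rather than a proof.
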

\begin{proof}
By Corollary~\ref{diskcor}, if $K$ bounds a disk $D$ with two minima, then the fibration on $S^3\setminus\nu(K)$ extends to a fibration of $B^4\setminus\nu(D)$. In particular, every prime ribbon knot of at most $10$ crossings bounds a $2$-minimum disk~\cite{kawauchi}, as do most prime fibered knots of $11$ and $12$ crossings (observed through e.g. symmetric union diagrams~\cite{lamm}). Using the KnotInfo database~\cite{livingston} to list fibered slice knots, and symmetric union diagrams of these knots due to Lamm~\cite{lamm}, we explicitly produce a $3$-minimum disk $D$ for each $11$- or $12$-crossing prime fibered ribbon knot which does not bound an obvious-to-the-author $2$-minimum disk. The bands for each disk either lie in a fiber for the knot $K\subset S^3$, or run transverse to the fibration. See Figure~\ref{fig:verification}. Transversality of a band can be seen by observing the band core is given by taking an arc in a fiber and adding twists (of one sign) around the binding (knot).

\begin{remark}
In the first draft of this paper, the author was not aware of the following examples of ribbon disks due to Brendan Owens and Frank Swenton (as of this writing, they have not yet released a paper including these disks, but Brendan Owens kindly let this author know of their examples). They have constructed (in particular) examples of $2$-minimum disks bounded by each of the knots in Figure~\ref{fig:verification} except for $\mathtt{12a_{990}}$. (These are each potentially isotopic to the $3$-minimum examples.) We retain the $3$-minimum examples as illustrations of how one can verify the transversality condition of Theorem~\ref{maintheorem}. In Figure~\ref{fig:brendansdisks}, we provide the $2$-minimum disks of Owens and Swenton.
\end{remark}

\end{proof}

\begin{figure}\begin{centering}
\scalebox{0.9}{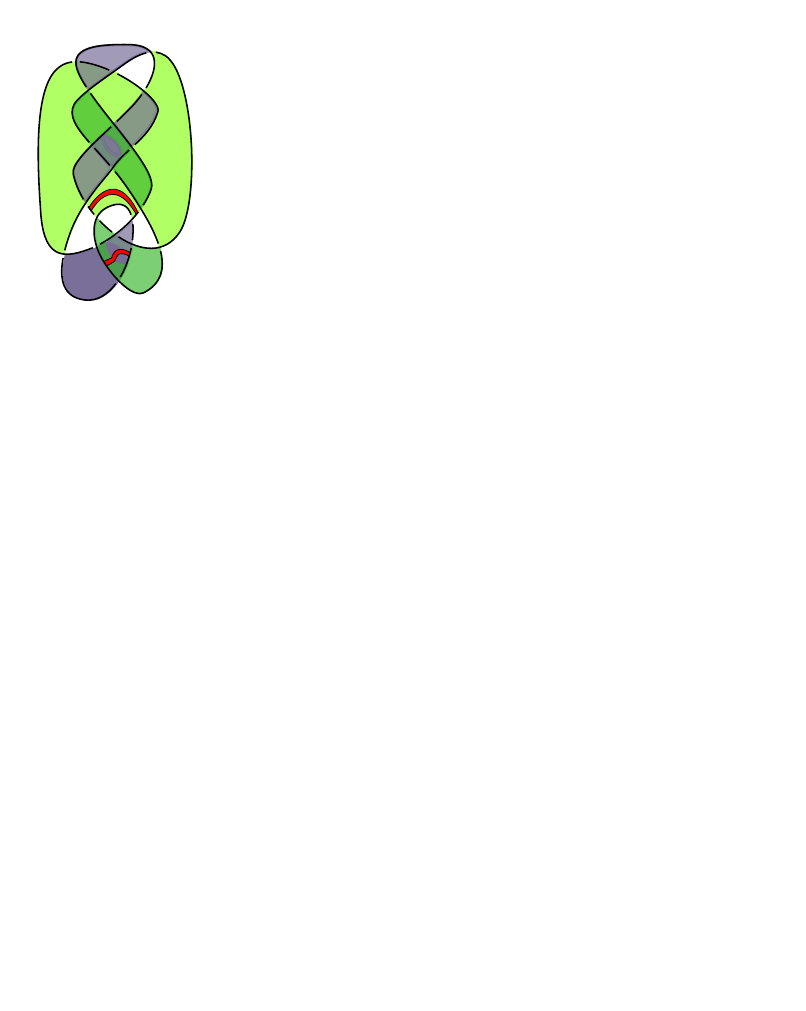}
\caption[Some 3-minimum ribbon disks satisfying the transversality condition of Theorem~\ref{maintheorem}.]{These prime fibered ribbon knots did not obviously (to the author) bound $2$-minimum disks in $B^4$. We produce here a $3$-minimum ribbon disk for each knot. The bands for $\mathtt{11a_{164}}, \mathtt{11a_{326}}, \mathtt{12a_{458}}, \mathtt{12a_{473}}, \mathtt{12a_{887}}, \mathtt{12a_{1225}}$ each lie in the illustrated fiber. Knots $\mathtt{12a_{427}}, \mathtt{12a_{101}}, \mathtt{12a_{1105}}$ are drawn with one band in the fiber and one transverse to the fibration. Both bands for $\mathtt{12a_{990}}$ run transverse to the fibration.}
\label{fig:verification}
\end{centering}\end{figure}

\begin{figure}\begin{centering}
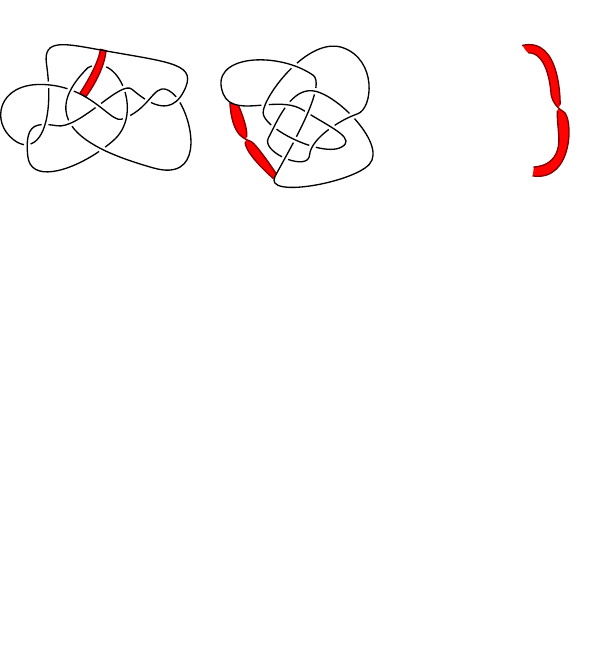
\caption[2-minimum disks of Owens--Swenton for all but one of the knots in Figure~\ref{fig:verification}.]{A band diagram of a $2$-minimum disk for each knot of Figure~\ref{fig:verification} except $\mathtt{12a_{990}}$, constructed by B. Owens and F. Swenton.}
\label{fig:brendansdisks}
\end{centering}\end{figure}

\begin{question}
Does $\mathtt{12a_{990}}$ bound a $2$-minimum ribbon disk?
\end{question}

Note here we defined ``small'' knots to be prime knots with fewer than 13 crossings. One might reasonably consider ``small'' knots to be $2$-bridge knots.
In Lisca's~\cite{lisca} classification of ribbon $2$-bridge knots, he produces an explicit ribbon disk for every ribbon $2$-bridge knot. Most of these disks have two minima, except for the examples in Figure~\ref{fig:2bridge} (left). By drawing the ribbon knots as symmetric unions (using diagrams of Cristoph Lamm{\footnote{This preprint is available online as C. Lamm, {\emph{Symmetric union presentations for $2$-bridge ribbon knots}}, arXiv:math/0602395 [math.GT], Feb. 2006}}), we find ribbon disks with two minima (Figure~\ref{fig:2bridge}, right). Therefore, by Corollary~\ref{diskcor}, every fibered ribbon $2$-bridge knot bounds a fibered ribbon disk.

\begin{figure}\begin{centering}
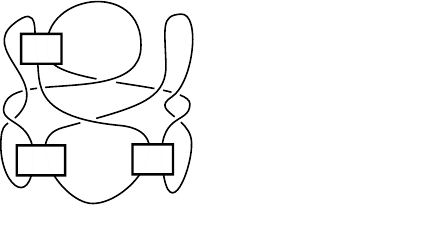
\caption[Ribbon 2-bridge knots admit ribbon disks with exactly two minima.]{Left: bands defining a ribbon disk for $K\subset S^3$, constructed by Lisca~\cite{lisca}. Right: A $2$-minimum ribbon disk with the same boundary knot. Note here that the integers $\pm2a,\pm2b$ refer to half-twists. The knot $K$ is fibered if and only if $|a|=|b|=1.$}
\label{fig:2bridge}
\end{centering}\end{figure}

However, we note that in this setting, the $2$-bridge notion of ``small'' is actually less interesting than low crossing-number. In fact, there are finitely many fibered ribbon $2$-bridge knots. We are not aware of this fact in the literature, so we include this as an interesting remark.

\begin{remark}
Up to mirroring, there are exactly five fibered ribbon $2$-bridge knots: $\mathtt{8_9, 9_{27}, 10_{42}, 11a_{96}, 12a_{477}}$.
\end{remark}
\begin{proof}
By Casson and Gordon~\cite{gordon2bridge}, every ribbon $2$-bridge knot (up to mirroring) has a continued fraction expansion in one of the following three families:
\begin{itemize}
\item Family 1: $[c_1, -c_2,  c_3, -c_4, \ldots,  c_{2n+1}, -1, -c_{2n+1}, \ldots, c_4, -c_3, c_2$, $-c_1]$ where each $c_i>0$, 
\item Family 2: $[2a, 2, 2b, -2, -2a, -2b]$,  $a, b\neq 0$, 
\item Family 3: $[2a, 2, 2b, -2a, -2, -2b]$,  $a, b\neq 0$.
\end{itemize}

As this is somewhat of a side note from the main thrust of this paper, we omit exposition of continued fraction expansion notation for $2$-bridge knots. For those interested, we recommend section 2 of~\cite{murasugi}.

\begin{proposition}
A knot $K$ in Family 1 is never fibered.
\end{proposition}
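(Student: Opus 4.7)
The plan is to exploit the classical characterization that a 2-bridge knot is fibered if and only if its Alexander polynomial is monic (Crowell, using alternation), combined with the explicit formula that if $p/q$ admits the (unique) even continued fraction expansion $[2a_1, 2a_2, \ldots, 2a_{2k}]$ with each $a_i$ a nonzero integer, then $\Delta_{K(p,q)}(t)$ has top coefficient $\pm a_1 a_2 \cdots a_{2k}$. Hence $K(p,q)$ is fibered precisely when every $|a_i|=1$.

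My proof would first convert the Family~1 continued fraction $[c_1,-c_2,\ldots,c_{2n+1},-1,-c_{2n+1},\ldots,-c_1]$ into its even form $[2a_1,\ldots,2a_{2k}]$ using the standard continued-fraction identities that trade odd entries with their neighbors. The palindromic-with-negation symmetry of the original expression around the central $-1$ is preserved by these moves, so the even form inherits a corresponding structural symmetry.

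The heart of the argument is to show that in the converted even form $[2a_1,\ldots,2a_{2k}]$, at least one $|a_i| \ge 2$. Since each $c_i$ is a positive integer, the central $-1$ forces at least one post-conversion entry of absolute value $\ge 4$: absorbing the odd $\pm 1$ into its neighbors $\mp c_{2n+1}$ produces an even entry of magnitude at least $|c_{2n+1}|$ or $|c_{2n+1}|+1$, and under the positivity constraints $c_i \ge 1$ this cannot simultaneously yield all $\pm 2$ entries across the full expansion. The product $a_1 \cdots a_{2k}$ therefore has absolute value $> 1$, so $\Delta_K$ is not monic and $K$ is not fibered.

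The main obstacle is the case analysis of the conversion around the central $-1$ and down the palindromic sequence, ruling out every combination of $c_i$ that would collapse to all $\pm 2$ entries; induction on $n$ should handle the general case, with the base $n=0$ (i.e.\ $[c_1,-1,-c_1]$) treated directly. An alternative route that avoids explicit conversion is to compute $\Delta_K$ via Fox--Milnor: the ribbon structure from the central $-1$ gives $\Delta_K(t) \doteq f(t)\,f(t^{-1})$, where $f$ can be identified with a tridiagonal (continuant) determinant in the $c_i$ whose top coefficient is strictly greater than $1$ by a positivity/recursion argument, again yielding a non-monic $\Delta_K$.
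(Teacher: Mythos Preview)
Your approach is genuinely different from the paper's and is valid in principle: a $2$-bridge knot is fibered if and only if every entry in its unique even continued fraction expansion is $\pm 2$, so it would suffice to show this fails for Family~1. But you have not carried out the decisive step. You correctly flag as ``the main obstacle'' the conversion of the Family~1 expansion to even form and the verification that some entry has absolute value at least $4$, and you then defer this to an unspecified induction on $n$. The one-line sketch---``absorbing the odd $\pm 1$ into its neighbors $\mp c_{2n+1}$ produces an even entry of magnitude at least $|c_{2n+1}|$ or $|c_{2n+1}|+1$''---is not a proof: the relevant continued-fraction identities do not behave this simply, entries can shrink as well as grow under conversion, and positivity of the $c_i$ alone does not obviously force a large entry without a real computation. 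Your Fox--Milnor alternative has the same gap, asserting but not proving that the continuant has leading coefficient exceeding $1$.

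The paper sidesteps all of this combinatorics with a short geometric argument. It first passes the half-expansion $[-c_{2n+1},\ldots,-c_1]$ to \emph{strict} form $[d_1,\ldots,d_m]$ (so in particular $d_1$ is even and nonzero), rewriting $K$ as $[-d_m,\ldots,-d_1,-1,d_1,\ldots,d_m]$, and draws the associated minimum-genus Seifert surface explicitly. From this surface one can deplumb two annuli, carrying $d_1-\operatorname{sign}(d_2)+1$ and $-(d_1-\operatorname{sign}(d_2))+1$ half-twists respectively; a one-line parity check shows these cannot both be Hopf bands, so the surface is not a fiber and $K$ is not fibered. That is one picture and two sentences. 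Your algebraic route would eventually reach the same conclusion, but at the cost of exactly the case analysis you have left undone.
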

\begin{proof}
Let $[d_1,\ldots, d_m]$ be a {\emph{strict}} continued fraction for $[-c_{2n+1}, \ldots$, $c_4$, $-c_3$, $c_2$, $-c_1]$, as in~\cite{murasugi} (i.e.\, all $d_i$ are nonzero, $d_{2i+1}$ is even and if $|d_{2i+1}|=2$ then $d_{2i+1}d_{2i+2}<0$). Then $K$ has continued fraction expansion $[-d_m,\ldots$,$-d_1$, $-1$, $d_1,\ldots$, $d_m]$. For odd $m$, this fraction gives a $2$-component link, so $m$ must be even. We draw a minimum-genus Seifert surface $F$ for $K$ in Figure~\ref{fig:familyone}. From $F$, we can deplumb an annulus with $d_1-$sign$(d_2)+1$ half-twists and an annulus with $-(d_1-$sign$(d_2))+1$ half-twists. These cannot both be Hopf bands, so $F$ is not a fiber.
\end{proof}

\begin{figure}\begin{centering}
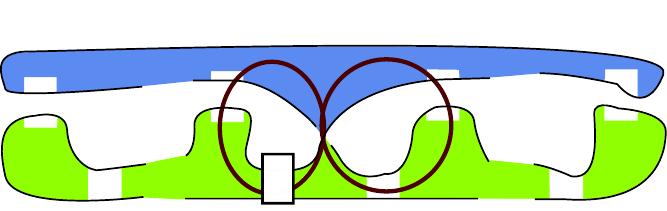
\caption[There are only 5 fibered, ribbon 2-bridge knots (Family 1).]{A minimum-genus Seifert surface for $2$-bridge ribbon knots in Family 1. From this surface, we can deplumb annuli with $d_1-$sign$(d_2)+1$ and $-d_1+$sign$(d_2)+1$ half-twists. These cannot both be Hopf bands, so the surface is not a fiber (and hence the knot is not a fibered knot).}
\label{fig:familyone}
\end{centering}\end{figure}

In Figure~\ref{fig:family23}, we draw a minimum-genus Seifert surface for a knot in Family 2 or 3. In either case, the surface decomposes as a Murasugi sum of two Hopf bands and 4 annuli with $2a,2b,-2a,-2b$ half-twists, respectively. Therefore, the knot is fibered if and only if $|a|=|b|=1$.

\begin{figure}\begin{centering}
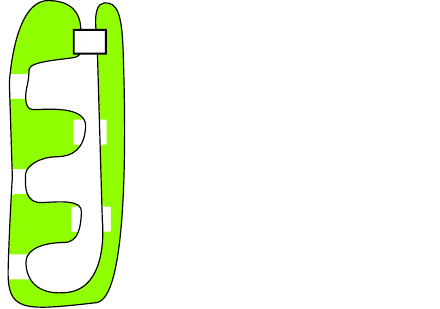
\caption[There are only 5 fibered, ribbon 2-bridge knots (Families 2 and 3).]{Minimum-genus surfaces for knots in Families 2 and 3. These are fibers if and only if $|a|=|b|=1$, so a knot in Family 2 or3 is fibered if and only if $|a|=|b|=1$.}
\label{fig:family23}
\end{centering}\end{figure}

The Family 2 knot with $a=1,b=-1$, and the Family 3 knots with $\{a,b\}=\{1,-1\}$ are all  $\mathtt{9_{27}}$. The Family $2$ knot with $a=1,b=1$ and the Family $3$ knot with $a=1,b=1$ are both $\mathtt{8_9}$. There are no other repeats, so we are left with five distinct $2$-bridge knots. Using KnotFinder~\cite{livingston}, we identify these knots as $\mathtt{8_9, 9_{27}, 10_{42}, 11a_{96}, 12a_{477}}$.
\end{proof}

\section{More questions and final comments\label{sec:questions}}
Recall Question~\ref{mainquestion}, the main motivation for this paper.

\begin{mainquestion}
Let $D$ be a homotopy-ribbon disk in $B^4$ so that $\boundary D\subset S^3$ is a fibered knot. Is $B^4\setminus\nu(D)$ fibered?
\end{mainquestion}

This naturally leads to the (potentially) easier question:
\begin{question}\label{constructive}
Let $K$ be a fibered homotopy-ribbon knot in $S^3$. Does $K$ bound a fibered homotopy-ribbon disk in $B^4$?
\end{question}

We remark that if the answer to Question~\ref{constructive} is ``no'' for any knot $K$, then the relative smooth $4$-dimensional Poincar\'e conjecture is false.
\begin{proof}
Assume $K$ is fibered and homotopy-ribbon in $S^3$. By Casson and Gordon~\cite{gordon}, $K$ bounds a disk $D$ fibered by handlebodies in a homotopy $4$-ball $V$. By Larson and Meier~\cite{jeff}, if $V\cong B^4$ then $D$ is strongly homotopy-ribbon.
\end{proof}
In Remark~\ref{strongremark}, we claimed that the converse of Larson and Meier's theorem holds even when ``strongly homotopy-ribbon'' is relaxed to ``homotopy-ribbon.'' That is, if a disk $D$ in $B^4$ is homotopy-ribbon and fibered, then the fibers are handlebodies. This follows easily from the proof of~\cite{jeff} and a paper by Cochran~\cite{cochran} cited therein, but we sketch the proof here as a matter of interest. This proof closely follows arguments in both of these papers; one should refer to~\cite{jeff} or~\cite{cochran} for more detail and exposition on disk-knots in $B^4$ or ribbon $2$-knots in $S^4$, respectively.

\begin{proof}
We make use of the following lemma of Cochran: Let $M$ be a closed $3$-manifold. Then the natural map $i^\#:M\to K(\pi_1(M),1)$ induces the zero map $i_*:H_3(M)\to H_3(\pi_1(M))$ if and only if $M=\#_h S^1\times S^2$ for some $h\ge 0$.

Let $D$ be a fibered homotopy-ribbon disk in $B^4$, with fiber $H$. Then $K=\boundary D$ is a fibered knot, with fiber $F$.

Let $W=B^4\setminus\nu(D)$ and $\widetilde{W}$ be the infinite cyclic cover of $W$. Then $\widetilde{W}\cong H\times\R, \boundary\widetilde{W}\cong F\times\R$. Thus, $\pi_1(H)\cong[\pi_1(W),\pi_1(W)]$ and $\pi_1(F)\cong[\pi_1(S^3_0(K)),$ $\pi_1(S^3_0(K))]$. Since $D$ is homotopy-ribbon, the inclusion map $\pi_1(S^3\setminus K)\to\pi_1(W)$ is surjective. Therefore, the inclusion $\pi_1(F)\to\pi_1(H)$ is surjective.

Let $G$ be the $2$-sphere $D\cup\overline{D}$ in $S^4=B^4\cup\overline{B^4}$. Then $S^4\setminus\nu(G)$ is fibered, with fiber $M'=H\cup_{F=\overline{F}}\overline{H}$. Let $M=M'\cup B^3$ be a closed $3$-manifold. By Seifert van-Kampen, the inclusion $H\into M$ induces an isomorphism $\pi_1(H)\cong\pi_1(M)$.

Let $X$ be obtained by surgering $S^4$ along $G$ (i.e. deleting $\nu(G)$ and gluing in $S^1\times B^3$). Let $A=(B^4\times I)\setminus(D\times I)$ be a $5$-manifold with $\boundary A\cong X$. The inclusion $\widetilde{X}\into\widetilde{A}$ induces an isomorphism $\pi_1(\widetilde{X})\cong\pi_1(\widetilde{A})$, since $\pi_1(\widetilde{A})=\pi_1(\widetilde{B^4\setminus D})\cong\pi_1(H)\cong\pi_1(M)\cong\pi_1(\widetilde{X})$. We obtain the following commutative diagram (left), which induces a commutative map of spaces (right).

\begin{center}
\begin{tabular}{cc}
\begin{tikzpicture}
  \matrix (m) [matrix of math nodes,row sep=3em,column sep=2.5em,minimum width=2em]
  {
     \pi_1(\widetilde{A}) &  \\
     \pi_1(\widetilde{X})& \left[\pi_1(X),\pi_1(X)\right] \\};
  \path[-stealth]
    (m-2-1) edge node [left] {$i_*$} (m-1-1)
            edge node [below] {id} (m-2-2)
    (m-1-1) edge [dashed] (m-2-2);
\end{tikzpicture}
&
\begin{tikzpicture}
  \matrix (m) [matrix of math nodes,row sep=3em,column sep=2.5em,minimum width=2em]
  {
     \widetilde{A} &  \\
     \widetilde{X}& K(\left[\pi_1(X),\pi_1(X)\right],1) \\};
  \path[-stealth]
    (m-2-1) edge node [left] {$i$} (m-1-1)
            edge node [below] {$i^{\#}$} (m-2-2)
    (m-1-1) edge [dashed] (m-2-2);
\end{tikzpicture}
\end{tabular}
\end{center}

Since $H_3(\widetilde{A})=H_3(\widetilde{B^4\setminus D})\cong H_3(H\times\mathbb{R})\cong H_3(H)=0$ and $i_*:H_3(\widetilde{X})\to H_3(\pi_1(X),\pi_1(X))$ factors through $H_3(\widetilde{A})$, $i_*$ is the zero map. But $\widetilde{X}\cong \R\times \widehat{M}$ and $[\pi_1(X),\pi_1(X)]\cong\pi_1(M)$, so $i_*:H_3(M)\to H_3(\pi_1(M))$ is the zero map. Therefore, $M\cong\#_g S^1\times S^2$ for some $g$. Since $\pi_1(H)\cong\pi_1(M)$, $g$ is the genus of $F$ and $H$ is a genus-$g$ handlebody.

\end{proof}

The following more restrictive version of Question~\ref{constructive} may be easier to answer.
\begin{question}\label{constructive2}
Let $K$ be a fibered ribbon knot in $S^3$. Does $K$ bound a fibered ribbon disk in $B^4$?
\end{question}
In section~\ref{sec:examples}, we gave an affirmative answer to Question~\ref{constructive2} when $K$ is prime with fewer than $13$ crossings.

Let $K$ be a ribbon knot. Let $S$ be a minimum-genus Seifert surface for $K$. Let $D$ be a ribbon disk for $K$, and project $D$ to $S^3$ to find an immersed ribbon disk $D'$ in $S^3$ bounded by $K$. Isotope $S$ so that $S$ agrees with one sheet of $D'$ near $K$. 
Without $K$ and the ribbon disk $D$ obeying the hypotheses of Theorem~\ref{maintheorem}, it is not true that $\{D'\cap S\}$ must contain $g(S)$ linearly independent curves in $H_1(S;\Z)$ (such a set of curves is called a ``ribbon derivative" for $K$ on $S$). Cochran and Davis~\cite{cochrandavis} constructed genus-1 ribbon knots whose genus-1 Seifert surfaces do not admit ribbon derivatives. Of course, these examples are not fibered (as the only genus-1 fibered knots are the two trefoils and the figure eight, none of which are slice).

\begin{question}
If $K$ is fibered (and hence $S$ is a fiber), then must $S\cap D'$ include $g(S)$ simple closed curves which are linearly independent in $H_1(S;\Z)$?
\end{question}

We used the transversality of $b_i$ to the fibration on $S^3\setminus\nu(K)$ to ensure the $1$-handles in the relative handlebody decomposition of $H$ are all geometrically cancelled. Without the transversality condition, we might amend the band movie to find a $3$-manifold bounded by $D$ (although not a fibration of $B^4\setminus\nu(D)$). However, we have not shown that a general ribbon disk $D$ bounds a {\emph{handlebody}} in $B^4$.

\begin{question}\label{lastquestion}
Let $D\subset B^4$ be a ribbon disk for a knot $K$. What conditions on $D$ and $K$ imply that $D$ bounds a handlebody in $B^4$?
\end{question}

In the case that $\boundary D$ is the unknot, proving that $D$ bounds 3-ball as in Question~\ref{lastquestion} would positively answer the following conjecture\footnote{The author strongly recommends the reference~\cite{suzuki} to anyone interested in $2$-dimensional knot theory.}.

\begin{suzuki}
Let $G$ be a $2$-knot in $S^4$ with unknotted equator. Then $G$ is smoothly unknotted.
\end{suzuki}
We remark that if a $2$-knot $G$ has unknotted equator, then $\pi_1(S^4\setminus G)\cong \Z$, so Suzuki's unknotting conjecture is a subcase of the following more general conjecture.

\begin{unknotting}
Let $G$ be a $2$-knot in $S^4$ with $\pi_1(S^4\setminus G)\cong\Z$. Then $G$ is smoothly unknotted.
\end{unknotting}
The $3$-dimensional unknotting conjecture (Dehn's lemma) was proved by Papakyriakopoulos~\cite{papa}. For $n>4$, the $n$-dimensional unknotting conjecture was proved in the topological case by Stallings~\cite{stallings} and later in the smooth case by Levine~\cite{lev1}~\cite{lev2}.

By work of Freedman and Quinn~\cite{freedman}, if $\pi_1(S^4\setminus G)\cong\Z$, then $G$ is topologically unknotted (i.e. $G$ bounds a $3$-ball topologically embedded into $S^4$). Whether $G$ is smoothly unknotted remains an open question.

\bibliographystyle{plain}
\bibliography{biblio}

\end{document}